\newtheorem{theorem}{Theorem}[section]
\newtheorem{lemma}[theorem]{Lemma}
\newtheorem{proposition}[theorem]{Proposition}
\newtheorem{corollary}[theorem]{Corollary}
\newtheorem{prop}{Proposition}[section]
\newtheorem{remark}[prop]{Remark}
\newenvironment{definition}[1][Definition]
           {\medbreak\noindent {\em #1. \enspace}}
           {\par \medbreak}
\makeatletter \@addtoreset{equation}{section} \makeatother
\def\ddt{\frac{d}{dt}}
\def\RR{{\mathrm R}}
\def\Rc{{\mathrm {Rc}}}
\def\SS{{\mathrm S}}
\begin{document}

\title{Harnack Estimates for Ricci Flow on a Warped Product}

%    Information for first author
\author{Hung Tran$^*$
}

%    Address of record for the research reported here
\address{Department of Mathematics,
 Cornell University, Ithaca, NY 14853-4201}
\email{hungtran@math.cornell.edu}
%    Current address
%\curraddr{}
%    \thanks will become a 1st page footnote.
%\thanks{}

%    Information for second author
%\author{Richard S. Hamilton$^{\flat}$}
%\thanks{$^{\flat}$Research
%partially supported by NSF grant no. \# }

%    Address of record for the research reported here
%\address{Department of Mathematics,
 % Columbia University, New York, NY 10027}
%\email{hamilton@math.columbia.edu}
%    Current address
%\curraddr{}
%    \thanks will become a 1st page footnote.
%\thanks{}

%    General info
\renewcommand{\subjclassname}{%
  \textup{2000} Mathematics Subject Classification}
\subjclass[2000]{Primary 53C44}
% Global differential geometry
% 53C21 Methods of Riemannian geometry, including PDE methods;
%   curvature restrictions [See also 58J60]
% 53C44 Geometric evolution equations (mean curvature flow)
% 53C55 Hermitian and K\"ahlerian manifolds [See also 32Cxx]
% Qualitative properties of solutions
% 35B35 Stability, boundedness
%Parabolic equations and systems [See also 35Bxx, 35Dxx, 35R30,
%35R35, 58J35]
%35K55 Nonlinear PDE of parabolic type
%35K90 Abstract parabolic evolution equations
% Partial differential equations on manifolds; differential operators
%58J35 Heat and other parabolic equation methods
% 58J37 Perturbations; asymptotics
% Systems theory; control - Stability
% 93D05 Lyapunov and other classical stabilities
%       (Lagrange, Poisson, $L^p, l^p$, etc.)

\date{\today}

\begin{abstract} In this paper, we study the Ricci flow on closed manifolds equipped with warped product metric $(N\times F,g_{N}+f^2 g_{F})$ with $(F,g_{F})$ Ricci flat. Using the framework of monotone formulas, we derive several estimates for the adapted heat conjugate fundamental solution which include an analog of G. Perelman's differential Harnack inequality in \cite{perelman1}.
\end{abstract}
\maketitle

\markboth{Hung Tran} {Harnack Estimates for Ricci Flow on a Warped Product}
\section{\textbf{Introduction}}
%%%%%%%%%%%%%%%%%%%%%%%%%%%%%%%%%%%%%%%%%%%%%%%%%%%%%%%%%%%%%%%%%%%
The Ricci flow, first introduced by R. Hamilton in \cite{H3}, has been studied extensively, particularly the last decade thanks to the seminal contribution by G. Perelman (\cite{perelman1}). In general, the parabolic nonlinear system possesses certain level of difficulty and it is plausible that more precise results can be obtained for any specific manifold. \\

In this paper, given $(F,g_{F})$ Ricci flat, we investigate  a closed manifold $M^{n+p}$ with warped product symmetry, namely $M^{n+p}=N^{n}\times F^{p}$ equipped with the warped product metric \footnote[1]{ The warped structure can also be defined without the Ricci flat assumption as in the classical work of T. Colding and J. Cheeger \cite{cheeger96} but that one is not preserved along the Ricci flow in general.}
\begin{equation}
\label{warp}
g_{M}=g_{N}+f^2 g_{F}=g_{N}+e^{2u}g_{F},
\end{equation} 
which evolves under the Ricci flow 
\begin{equation}
\frac{\partial}{\partial{t}}g_{M}=-2\text{Rc}_{M}.
\end{equation}
It  is clear that the metric is constant on each fiber and  the structure is preserved along the Ricci flow. 
Throughout this paper, we will use  $\text{Rm}_{X}$, $\text{Rc}_{X}$, $\RR_{X}$ to denote curvature tensor, Ricci curvature, and scalar curvature with respect to a Riemannian metric $g$ on manifold X. We'll also omit the subscript when the context is clear.\\

The case $m=3$ was studied first by X. Cao on isoperimetric estimates for $S^2\times S^{1}$  in \cite{isop}. His work preceded Perelman's papers. More recently, J. Lott and N. Sesum were able to prove definite classification results using the Gauss-Bonet theorem for surfaces \cite{LS11}. In higher dimension, such strong results are unexpected. \\

Our approach in the current paper is inspired by the framework of monotonicity formulae set up be Perelman and, thus, we are interested in higher dimensional case and look for improving general results using the symmetry of the warped structure.  
%??For example, it was first mentioned by Lott and Sesum that we can obtain a dilation limit of finite time singularity on a $(n+1)$-dimensional compact warped product which is an isometric product of R with an n-dimensional ancient solution.??\\ 

In particular, we derive several estimates for a fundamental solution to the adapted conjugate heat equation which include a Harnack inequality that is structurally similar but computationally different from Perelman's result described below. 
Harnack inequalities, which aim to compare the values at two different points of a solution to a partial differential equation, are a vast subject of research that can be traced back to the 19th century. On parabolic equations, the breakthrough result was obtained in \cite{LY86} where the authors established an estimate for any positive solution to the heat equation. Later, Hamilton \cite{hmatrix93} proved a matrix version of Li-Yau type estimate under slightly different assumptions and, furthermore, he brought this idea into the study of general geometric flows by proving an analogue for the Ricci flow in \cite{H93harnack}. In \cite{perelman1}, Perelman marked his contribution by establishing a Li-Yau-Hamilton (LYH) type inequality for fundamental solutions of the conjugate heat equation. \\

To describe Perelman's result, let $(M^{m}, \overline{g}(t))$, $0\leq t\leq T$, be a solution to the Ricci flow on a closed manifold M and $\overline{H}=(4\pi\tau)^{-m/2}e^{-\overline{h}}$ a fundamental solution of the conjugate heat equation: 
\begin{equation}
\Box^{\ast}\overline{H}=(-\partial_{t}-\triangle+\RR)\overline{H}=0, 
\end{equation}
centered at $(y,T)$. Define
\begin{equation}
\overline{v}=\Big((T-t)(2\triangle{\overline{h}}-|\nabla{\overline{h}}|^2+\RR)+\overline{h}-n\Big)\overline{H},
\end{equation}
then 
\begin{equation}
%\Box^{\ast}\overline{v}&=-2(T-t)|\text{Rc}+\text{Hess}\overline{h}-\frac{\overline{g}}{2\tau}|^2\leq 0 \text{ and}\\
\overline{v} \leq 0 \text{ for all $t<T$.}
\end{equation}
The result is important in the study of Ricci flow because these fundamental solutions are essential to understand monotone functionals which are the main novel machinery developed by Perelman. For instance, the recent work of X. Cao and Q. Zhang used this Harnack inequality to derive several estimates for the heat kernel and they eventually obtained rigidity for singularity model under type I assumption \cite{cz11}.\\

For warped product metrics, a fundamental solution $\overline{H}$ is not constant on each fiber since, for $x,y$ coordinates in N and $z$ in $F$, 
\begin{equation*}
\lim_{t\rightarrow T}\overline{H}((x,z_{1}),t;(y,z_{2}),T)=\delta_{(y,z_{2})}((x,z_{1})).
\end{equation*}
Thus the fundamental solution is not pertinently compatible with the metric setting and, therefore, it is interesting to derive a more suitable estimate taking advantage of that symmetrical structure. From now on any function is constant on each fiber and any operator acts w.r.t N if not specified otherwise. Also for $S=\RR_{N}-p|\nabla{u}|^2$,
\begin{equation}
 \Box_{w}^{\ast}=-\partial_{t}-\triangle_{N}+S
\end{equation}
is the adapted conjugate heat operator. The main result of our paper is the following Harnack inequality.

\begin{theorem}
\label{harnackwarpedlaplacian}
Let $(M,g_{M}(t))$, $0\leq t\leq T$, be a solution to the Ricci flow and $g_{M}(0)$ is a warped product metric as in (\ref{warp}). Let $\overline{H}$ be a positive function on M such that,  $H=\overline{H}e^{u}=(4\pi\tau)^{-n/2}e^{-h}$ is the fundamental solution of  
\begin{equation}
\label{conjheat}
\Box_{w}^{\ast}H+p\nabla{u}\nabla{H}=0 
\end{equation}
on $N$,  centered at $(y,T)$. Let $$v=\Big((T-t)(2\triangle{h}-|\nabla{h}|^2+S)+h-n\Big)H,$$ then  for all $t<T$, $v\leq 0.$%and $\mathcal{S}$, $S$ defined in \ref{warpedLie} then \\
%{\bf a.} $\Box_{w}^{\ast}v=-2(T-t)\Big[|\mathcal{S}+\text{Hess}h-\frac{g}{2\tau}|^2+|\triangle u-\nabla{u}\nabla{h}|^2\Big]$\\
%{\bf b.} 
\end{theorem}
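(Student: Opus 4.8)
\emph{Proof strategy.} The plan is to adapt Perelman's argument for the differential Harnack inequality to the warped setting: a pointwise differential identity for $v$ together with a maximum‑principle argument on $N$. Write $\tau=T-t$ and $\mathcal L:=\Box_w^{\ast}+p\nabla u\cdot\nabla = -\partial_t-\triangle_N+p\nabla u\cdot\nabla+S$, so that the hypothesis is $\mathcal L H=0$ with $H=(4\pi\tau)^{-n/2}e^{-h}$, hence $\nabla H=-H\nabla h$. Setting $\phi=\tau(2\triangle h-|\nabla h|^2+S)+h-n$ so that $v=\phi H$, and using $\mathcal L H=0$ and $\nabla H=-H\nabla h$, a direct expansion gives
\[
\mathcal L v \;=\; H\Big(-\partial_t\phi-\triangle\phi+(p\nabla u+2\nabla h)\cdot\nabla\phi\Big).
\]
So it suffices to show the bracket is $\le 0$; then $\phi$ is a subsolution of the drift heat equation $\partial_\tau\phi=\triangle\phi-(p\nabla u+2\nabla h)\cdot\nabla\phi$ on the closed manifold $N$ (forward in $\tau$), and a maximum principle with the correct terminal behaviour will finish the proof.

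\emph{Step 1 (the main computation).} I would first record the evolution equations needed: the Ricci flow equations for a warped product with $(F,g_F)$ Ricci flat, namely $\partial_t g_N=-2(\Rc_N-p\nabla^2u-p\,du\otimes du)$ together with the evolution of $u$ and of $S=\RR_N-p|\nabla u|^2$, and the consequent evolution equations for $h$, $\triangle h$ and $|\nabla h|^2$ obtained by differentiating $\mathcal L H=0$ (here the time derivatives of $\triangle h$ and $|\nabla h|^2$ pick up the moving metric, which is exactly where $\Rc_N-p\nabla^2u-p\,du\otimes du$ enters). Substituting these into the bracket, the goal is to show it equals $-2\tau|A|^2$ for a symmetric $2$‑tensor $A$ on $N$ — structurally $A=\Rc_N-p\nabla^2u-p\,du\otimes du+\nabla^2 h-\tfrac{1}{2\tau}g_N$, possibly minus a further manifestly non‑negative term coming from the $p|\nabla u|^2$ part of $S$ and the drift $p\nabla u$. \textbf{This assembling of all the first–order cross terms into a sum of squares is the main obstacle}: it is the warped analogue of Perelman's identity $\Box^{\ast}\overline v=-2\tau|\Rc+\nabla^2\overline h-\tfrac{g}{2\tau}|^2\overline H$, and requires careful bookkeeping of the extra $u$‑, $p$‑ and drift–terms; the Ricci‑flatness of $F$ is used precisely to drop $\Rc_F$ and reduce the warped curvature formulae to this clean form. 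The outcome is $\mathcal L v=-2\tau|A|^2H\le 0$.

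\emph{Step 2 (terminal behaviour and maximum principle).} To apply the maximum principle on $[t_0,T)$ one needs $\limsup_{t\to T}v\le 0$. Since $H$ is the fundamental solution of $\mathcal L H=0$ centered at $(y,T)$, near $\tau=0$ one has $h=\tfrac{d_N^2(\cdot,y)}{4\tau}+O(1)$ with the standard heat‑kernel corrections, and the leading singular contributions of $\tau\cdot 2\triangle h$, $-\tau|\nabla h|^2$ and $h$ to $\phi$ cancel, the next‑order term forcing $\phi\le 0$ on a punctured neighbourhood of $(y,T)$; the point $y$ is handled by continuity. (Equivalently, an integration by parts shows $\int_N v\,dV_N$ equals the warped $\mathcal W$‑type integral $\int_N[\tau(|\nabla h|^2+S)+h-n]H\,dV_N$, which tends to $0$ as $\tau\to 0$; combined with $\mathcal L v\le 0$ and the strong maximum principle this again gives $v\le 0$ near $T$.) Finally, fix $t_0<T$: on $N\times[t_0,T-\varepsilon]$, $\phi$ is a subsolution of a uniformly parabolic drift equation on the closed manifold $N$ which is $\le 0$ on the slice $t=T-\varepsilon$ in the limit; the parabolic maximum principle yields $\phi\le 0$ there, and letting $\varepsilon\to0$ and $t_0$ arbitrary gives $\phi\le 0$, i.e. $v=\phi H\le 0$, for all $t<T$.
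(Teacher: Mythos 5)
Your Step~1 correctly identifies the shape of the key identity but does not establish it. In the paper this is Proposition~\ref{limit0}(a), carried out after first removing the drift term by the diffeomorphisms of Section~2 (a reduction you skip, which is what makes the bookkeeping manageable); the outcome is $\Box_{w}^{\ast}v=-2\tau\big(|\mathcal{S}+\text{Hess}\,h-\frac{g}{2\tau}|^2+|\triangle u-\nabla u\nabla h|^2\big)H$, so your guessed tensor $A$ is essentially $\mathcal{S}+\text{Hess}\,h-\frac{g}{2\tau}$ and the ``further manifestly non-negative term'' you anticipate is the scalar square $|\triangle u-\nabla u\nabla h|^2$. This is a gap of execution rather than of idea, and could in principle be filled.

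The genuine gap is Step~2. The pointwise claim $\limsup_{t\to T}v\le 0$ from heat-kernel asymptotics is precisely the delicate point in Perelman's inequality: the cancellation of the singular contributions of $\tau(2\triangle h-|\nabla h|^2)$ and $h$ holds only at leading order, and the sign of the next-order term is not controlled by the expansion of Theorem~\ref{asymptoticconj} (it involves derivatives of the coefficients $u_j$ and the remainder $w_k$); no rigorous treatment proceeds this way, and the paper does not either. Your parenthetical alternative is also logically insufficient: knowing $\int_N v\,d\mu_N\to 0$ together with $\Box_{w}^{\ast}v\le 0$ yields only the integral inequality $\int_N v\,d\mu_N\le 0$, and no strong maximum principle upgrades an integral inequality to the pointwise statement $v\le 0$. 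The paper's actual route is to pair $v$ with an \emph{arbitrary} positive solution $\Phi$ of the forward heat equation: $\rho_\Phi(t)=\int_N v\Phi\,d\mu_N$ is nondecreasing in $t$ and tends to $0$ as $t\to T$, hence $\rho_\Phi\le 0$ for all $t<T$, and since $\Phi(\cdot,t_0)$ can be prescribed to be any positive function, $v(\cdot,t_0)\le 0$ pointwise. Moreover, proving $\lim_{\tau\to0}\rho_\Phi=0$ is itself the bulk of the work: it requires the upper bound for $H$ via monotonicity of $\mu_w$ (Lemma~\ref{conjestimateS}), the gradient estimate of Lemma~\ref{gradconjS} to control $\tau\int|\nabla h|^2H\Phi$, the comparison $h\le\ell_w$ with the reduced distance (Lemma~\ref{compareRDandconjheat} and Lemma~\ref{integralboundabove}), and finally $\lim_{\tau\to0}\mu_w(g,u,\tau)=0$ to rule out a strictly negative limit. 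None of these ingredients appear in your proposal, so as written the terminal-behaviour step does not close.
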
    

An immediate consequence is the following LYH-type Harnack estimate.
\begin{corollary}
\label{harnackwarpedtime}
Under the assumption as above, along any smooth curve $\gamma(t)$ in N, let $\tau=T-t$, we have
\begin{align*}
-\partial_{t}h(\gamma(t),t)&\leq \frac{1}{2}(S(\gamma(t),t)+|\dot{\gamma}(t)|^2)-\frac{1}{2(T-t)}h(\gamma(t),t),\\
\partial_{\tau}(2\sqrt{\tau}h)&\leq \sqrt{\tau}(S+|\dot{\gamma}(t)|^2). 
\end{align*}
\end{corollary}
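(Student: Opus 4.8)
The plan is to derive Corollary~\ref{harnackwarpedtime} directly from Theorem~\ref{harnackwarpedlaplacian} by a pointwise algebraic manipulation, followed by a Cauchy--Schwarz (completing-the-square) step along the curve $\gamma$; no new maximum-principle or monotonicity argument is needed, exactly as in Perelman's passage from the integrand inequality $\overline v\le 0$ to its differential Li--Yau--Hamilton form.

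First I would extract the pointwise content of Theorem~\ref{harnackwarpedlaplacian}. Since $H>0$, the statement $v\le 0$ says, for every $t<T$, that
\[
2\triangle h-|\nabla h|^2+S\le\frac{n-h}{T-t},\qquad\text{so that}\qquad \triangle h\le\tfrac12|\nabla h|^2-\tfrac12 S+\frac{n-h}{2(T-t)}.
\]
Next I would record the evolution equation for $h$ forced by (\ref{conjheat}). Writing $H=(4\pi\tau)^{-n/2}e^{-h}$ with $\tau=T-t$, one has $\partial_t H=\big(\tfrac{n}{2\tau}-\partial_t h\big)H$, $\triangle H=\big(|\nabla h|^2-\triangle h\big)H$ and $\langle\nabla u,\nabla H\rangle=-\langle\nabla u,\nabla h\rangle\,H$; substituting into (\ref{conjheat}) and dividing by $H$ yields
\[
-\partial_t h=\triangle h-|\nabla h|^2+S-\frac{n}{2\tau}-p\langle\nabla u,\nabla h\rangle .
\]

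Plugging the bound for $\triangle h$ from the first display into this identity, the $\tfrac{n}{2\tau}$ terms cancel and one is left with the pointwise estimate
\[
-\partial_t h\le\frac12 S-\frac12|\nabla h|^2-p\langle\nabla u,\nabla h\rangle-\frac{h}{2\tau}.
\]
Then, along a smooth curve $\gamma$ in $N$, I would add the transport term $\langle\nabla h,\dot\gamma\rangle$ so as to pass to the derivative of $h$ along $\gamma$, and complete the square: the quadratic $-\tfrac12|\nabla h|^2+\langle\nabla h,\dot\gamma-p\nabla u\rangle$ is at most $\tfrac12|\dot\gamma-p\nabla u|^2$, and reabsorbing the warped correction $-p\nabla u$ into the velocity produces the $\tfrac12|\dot\gamma|^2$ term and hence the first inequality. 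I expect the only genuine friction here to be bookkeeping: keeping straight the factor-$2$ gap between the $2\triangle h$ appearing in $v$ and the $\triangle h$ in the heat equation, the sign conventions relating $\partial_t$ and $\partial_\tau$, and the extra term $p\langle\nabla u,\nabla h\rangle$, which is peculiar to the warped setting and has no counterpart in Perelman's computation; one must verify it is really taken up by the velocity term and not by $S$.

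Finally, the Li--Yau--Hamilton form is the previous inequality multiplied by the integrating factor $2\sqrt\tau$: since $\partial_\tau(2\sqrt\tau\,h)=\tau^{-1/2}h+2\sqrt\tau\,\partial_\tau h$ and $\partial_\tau h=-\partial_t h$, inserting the first inequality makes the $h/\sqrt\tau$ contributions cancel, leaving $\partial_\tau(2\sqrt\tau\,h)\le\sqrt\tau\,(S+|\dot\gamma|^2)$, which is the second assertion.
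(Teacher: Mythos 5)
Your overall strategy is the same as the paper's: read off the pointwise inequality $2\triangle h-|\nabla h|^2+S\le\frac{n-h}{\tau}$ from $v\le 0$, combine it with the evolution equation for $h$ coming from the conjugate heat equation, apply Cauchy--Schwarz along $\gamma$, and then multiply by the integrating factor $2\sqrt{\tau}$ (your last step, and the algebra up to the completing-the-square, are correct). The gap is in the final ``reabsorption.'' Working with the drifted equation (\ref{conjheat}) you correctly pick up the extra term $-p\langle\nabla u,\nabla h\rangle$, so along $\gamma$ the quadratic in $\nabla h$ is $-\tfrac12|\nabla h|^2-\langle\nabla h,\dot\gamma+p\nabla u\rangle$, which completes to $\tfrac12|\dot\gamma+p\nabla u|^2$ --- not $\tfrac12|\dot\gamma|^2$. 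The discrepancy $p\langle\dot\gamma,\nabla u\rangle+\tfrac{p^2}{2}|\nabla u|^2$ contains a sign-indefinite cross term that cannot be discarded, and you cannot ``absorb'' $p\nabla u$ into the velocity without changing the curve: if you replace $\dot\gamma$ by $\dot\gamma+p\nabla u$ on the right, the transport term $\langle\nabla h,\dot\gamma\rangle$ on the left no longer matches the total derivative of $h$ along the new curve. So, as a pointwise statement in the original system, your computation only yields the inequality with $|\dot\gamma+p\nabla u|^2$ in place of $|\dot\gamma|^2$.

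The paper resolves this by proving the corollary in the transformed system (\ref{warpedLie}): its proof begins ``As $H$ satisfies $\Box_w^{\ast}H=0$,'' i.e.\ after pulling back by the diffeomorphism generated by $-p\nabla u$ the conjugate heat equation has no drift, $h_t=-S-\triangle h+|\nabla h|^2+\frac{n}{2\tau}$, and the Cauchy--Schwarz step closes exactly as in Perelman's argument, with the curve and $|\dot\gamma|^2$ understood in the pulled-back metric. To repair your proof you should either make that same reduction explicit (prove the estimate for Theorem \ref{eqstate}'s setting and note that the corollary's curve lives in the gauge-transformed picture), or state the inequality in the original system with the corrected velocity $\dot\gamma+p\nabla u$.
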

The above inequality is the motivation for the definition below. 

\begin{definition}
%Let $(N,u(.,t),g(t))$, $0\leq t\leq T$ be a solution to (\ref{warpedLie}), 
Given $\tau(t)=T-t$, we define the $\mathcal{L}_{w}$-length of a curve $\gamma: [\tau_{0},\tau_{1}]\mapsto N$, $[\tau_{0},\tau_{1}]\subset[0,T]$ by $\mathcal{L}_{w}(\gamma):=\int_{\tau_{0}}^{\tau_{1}}\sqrt{\tau}(S(\gamma(\tau))+|\dot{\gamma}(\tau)|^2)d\tau$. \\
For a fixed point $y\in N$ and $\tau_{0}=0$, the backward reduced distance is defined as 
\begin{equation}
\label{adaptedRD}
\ell_{w}(x,\tau_{1}):=\inf_{\gamma\in \Gamma}\{\frac{1}{2\tau_{1}}\mathcal{L}_{w}(\gamma)\}
\end{equation}
where $\Gamma=\{\gamma:[0,\tau_{1}]\mapsto M, \gamma(0)=y, \gamma(\tau_{1})=x\}$.\\
The backward reduced volume is 
\begin{equation}
V_{w}(\tau):=\int_{M}(4\pi\tau)^{-n/2}e^{-\ell_{w}(y,\tau)}d\mu_{\tau}(y).
\end{equation}
\end{definition}
The next theorem exposes relations between fundamental solutions and the reduced distance defined with respect to the same reference point $(y,T)$.
 
\begin{theorem} \label{conjandRD}
Let $(M,g_{M}(t))$, $0\leq t\leq T$, be a solution to the Ricci flow and $g_{M}(0)$ is a warped product metric as in (\ref{warp}). Let $\overline{H}$ be a positive function on M such that,  $H=\overline{H}e^{u}=(4\pi\tau)^{-n/2}e^{-h}$ is the fundamental solution of  $\Box_{w}^{\ast}H+p\nabla{u}\nabla{H}=0$ on $N$,  centered at $(y,T)$. If $\Phi$ be a  positive solution to the heat equation $\partial_{t}\Phi=\triangle_{\overline{g}}\Phi$, then the followings hold:
\begin{align*}
\text{\bf a. } &  h(x,l;y,t) \leq \ell_{w}(x,T-l),\\
\text{\bf b. } &  \lim_{\tau\rightarrow 0} 4\tau\ell_{w}(x,\tau)= d^{2}_{T}(y,x), \\
\text{\bf c. } & \lim_{\tau\rightarrow 0} \int_{N} hH\Phi d\mu_{N} =\lim_{\tau\rightarrow 0} \int_{N} \ell_{w}(x,\tau) H\Phi d\mu_{N}\\
&=\lim_{\tau\rightarrow 0} \int_{N} \frac{d^{2}_{T}(x,y)}{4\tau} H\Phi d\mu_{N}=\frac{n}{2}\Phi(y,T).\nonumber
\end{align*}
\end{theorem}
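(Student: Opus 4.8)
The plan is to adapt Perelman's original arguments for $\overline{H}$, $\ell$, and $d^2$ to the warped-product setting, using Theorem~\ref{harnackwarpedlaplacian} as the crucial starting point, and to handle the three parts in the order \textbf{a}, \textbf{b}, \textbf{c}, since each uses the previous.

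For part \textbf{a}, I would first translate Theorem~\ref{harnackwarpedlaplacian} into a pointwise differential inequality for $h$. Writing $v\leq 0$ and using the defining relation $\Box_w^\ast H + p\nabla u\nabla H = 0$, the quantity $2\triangle h - |\nabla h|^2 + S$ can be rewritten (after the standard computation relating $\Box_w^\ast H$ to $h$) in a form that, combined with $v\le 0$, yields an inequality of the shape $\partial_\tau h + |\nabla h|^2 \le \frac{1}{2}(S+\dots) - \frac{h}{2\tau}$ — essentially the content already extracted in Corollary~\ref{harnackwarpedtime}. Then, given any $x$, choose $\gamma$ an almost-minimizing curve for $\ell_w(x,T-l)$, parametrize it by $\tau$, and integrate the inequality $\partial_\tau(2\sqrt\tau\, h(\gamma(\tau),\tau)) \le \sqrt\tau(S+|\dot\gamma|^2)$ from $\tau=0$ to $\tau = T-l$; the boundary term at $\tau\to 0$ vanishes because $2\sqrt\tau\, h \to 0$ (which itself follows from the small-time asymptotics of the fundamental solution, $h \sim \frac{d_T^2}{4\tau} + \frac{n}{2}\log(4\pi\tau)^{?}$-type behavior — more precisely $2\sqrt\tau h\to 0$), giving $2\sqrt{T-l}\,h(x,l) \le \int_0^{T-l}\sqrt\tau(S+|\dot\gamma|^2)d\tau = \mathcal{L}_w(\gamma)$, hence $h(x,l)\le \frac{1}{2(T-l)}\mathcal{L}_w(\gamma) = \ell_w(x,T-l)$ after taking the infimum over $\gamma$.

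For part \textbf{b}, this is the warped analogue of the fact that $\lim_{\tau\to 0} 4\tau\ell(x,\tau) = d_0^2(y,x)$ for the $\mathcal{L}$-distance; here the relevant metric on $N$ is $g_N(T)$ since the reference point is $(y,T)$ and $\tau\to 0$ means $t\to T$. I would follow Perelman/Ye's argument: for the lower bound, along any curve $\gamma$ one bounds $\mathcal{L}_w(\gamma) = \int_0^{\tau_1}\sqrt\tau(S+|\dot\gamma|^2)\,d\tau \ge \int_0^{\tau_1}\sqrt\tau|\dot\gamma|_{g_N(T)}^2\,d\tau (1+o(1))$ because $S$ is bounded below and $g_N(t)\to g_N(T)$ uniformly as $t\to T$ on the compact $N$; reparametrizing by $s=2\sqrt\tau$ turns this into the $L^2$ energy, and Cauchy–Schwarz gives $4\tau_1\ell_w(x,\tau_1)\ge d_T^2(y,x)(1+o(1))$. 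For the upper bound one plugs in the constant-speed $g_N(T)$-geodesic from $y$ to $x$ as a test curve and estimates $\mathcal{L}_w$ directly; the $S$-term contributes $O(\tau_1^{3/2})$ and the energy term gives $\frac{d_T^2}{2\sqrt\tau_1}(1+o(1))$, so $4\tau_1\ell_w \le d_T^2(1+o(1))$. Boundedness of $|\Rm_N|$ and $|\nabla u|$ on $[T-\epsilon,T]$ (automatic on a closed manifold over a compact time interval) is what makes all the $o(1)$'s uniform.

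For part \textbf{c}, the strategy is a standard concentration/Laplace-type argument: as $\tau\to 0$ the measure $H\,d\mu_N = (4\pi\tau)^{-n/2}e^{-h}\,d\mu_N$ concentrates at $y$ because $H$ converges to $\delta_y$ in the appropriate weak sense (this is exactly how $H$ was characterized as the fundamental solution). So $\int_N \frac{d_T^2(x,y)}{4\tau}H\Phi\,d\mu_N \to \frac{n}{2}\Phi(y,T)$ follows from the Gaussian-integral computation $\int (4\pi\tau)^{-n/2}\frac{|x|^2}{4\tau}e^{-|x|^2/4\tau}dx = \frac n2$ in normal coordinates at $y$ for $g_N(T)$, together with $\Phi$ continuous; the error from the metric not being exactly Euclidean and from $e^{-h}$ not being exactly the Gaussian is controlled using part \textbf{b} (which says $h$ and $\frac{d_T^2}{4\tau}$ and $\ell_w$ all agree to leading order) and the matching small-time Gaussian asymptotics of $H$ itself. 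Then the three limits in the chain are forced to be equal by squeezing: part \textbf{a} gives $h \le \ell_w$ pointwise, and both $hH$ and $\ell_w H$ are squeezed between comparable Gaussians, so all three integrals share the common limit $\frac n2\Phi(y,T)$. The main obstacle, I expect, is part \textbf{c}: making the concentration argument rigorous requires precise control on the small-time asymptotics of the fundamental solution $H$ of the adapted operator $\Box_w^\ast H + p\nabla u\nabla H = 0$ — one needs that $H$ genuinely behaves like the Euclidean heat kernel of $g_N(T)$ to leading order near $y$, uniformly enough that the $\frac{1}{4\tau}$ weight (which blows up) does not spoil the limit; establishing this either requires invoking standard parabolic parametrix/heat-kernel asymptotics for the drift-Laplacian on the closed manifold $N$, or deriving it from the monotonicity/entropy machinery. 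Parts \textbf{a} and \textbf{b} are comparatively routine adaptations of Perelman's arguments once the curvature and warping quantities are noted to be bounded over $[0,T]$.
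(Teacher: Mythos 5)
Your plan for part (a) is circular within this paper's logical architecture. You propose to obtain $h\le \ell_w$ by integrating the Harnack inequality of Corollary \ref{harnackwarpedtime} along an almost-minimizing $\mathcal{L}_w$-curve. But Corollary \ref{harnackwarpedtime} follows from Theorem \ref{harnackwarpedlaplacian}, i.e.\ from Theorem \ref{eqstate} after the diffeomorphism; the proof of Theorem \ref{eqstate} rests on Proposition \ref{limit0}, whose proof invokes Lemma \ref{integralboundabove}, and that lemma is proved precisely from the pointwise inequality $h\le\ell_w$ of Lemma \ref{compareRDandconjheat}(c). So the statement you want to prove is an ingredient of the theorem you are starting from. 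The paper breaks this circle by proving $h\le\ell_w$ independently of any Harnack estimate: one shows that $(4\pi\tau)^{-n/2}e^{-L_w/(4\tau)}$ is a subsolution, $\Box_{w}^{\ast}\bigl((4\pi\tau)^{-n/2}e^{-L_w/(4\tau)}\bigr)\le 0$, which by M\"uller's general criterion reduces to the nonnegativity of $\mathcal{D}(\mathcal{S},X)=2(\triangle u-\langle\nabla u,X\rangle)^2\ge 0$, and then compares with the fundamental solution $H$ via the maximum principle, both quantities tending to $\delta_y$ as $\tau\to 0$. To keep your integration-of-Harnack argument you would first need an independent proof of Theorem \ref{harnackwarpedlaplacian}; as the paper is structured, part (a) must come first.

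Your part (b) is essentially the paper's argument: the paper cites M\"uller's two-sided bound $e^{-2k_1\tau}d_T^2-\tfrac{4k_1n}{3}\tau^2\le L_w\le e^{2k_2\tau}d_T^2+\tfrac{4k_2n}{3}\tau^2$, which is exactly the Perelman/Ye comparison you sketch. For part (c), your chain of upper bounds $\int hH\Phi\le\int\ell_w H\Phi\le\int\tfrac{d_T^2}{4\tau}H\Phi\to\tfrac n2\Phi(y,T)$ is the paper's Lemma \ref{integralboundabove}, using the parametrix expansion of Theorem \ref{asymptoticconj}. Where you diverge is the matching lower bound. You propose a direct Laplace-method squeeze using Gaussian asymptotics of $H$; this is plausible but is exactly the step you flag as ``the main obstacle'' and leave unproved, since it requires uniform control of $h-\tfrac{d_T^2}{4\tau}$ against a weight that blows up like $\tau^{-1}$. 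The paper instead obtains equality from the entropy machinery: $\rho_\Phi(t)=\int_N v\Phi\,d\mu_N$ is nondecreasing with limit $\alpha\le 0$, and $\alpha<0$ would force $\lim_{\tau\to 0^+}\mu_w(g,u,\tau)<0$, contradicting Lemma \ref{basicmu}. That monotonicity argument is the ingredient you are missing if the Gaussian squeeze cannot be made uniform.
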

%\begin{remark} Here the fundamental solution and the adapted reduced distance are defined with respect to the same reference point $(y,T)$.
%\end{remark}
\begin{remark} If $H$ satisfies (\ref{conjheat}), then $\overline{H}=He^{-u}$ satisfies the conjugate heat equation on $(M,\overline{g})$. However, $\overline{H}$ is not a fundamental solution because it blows up on the whole fiber over $(y,T)$. That partially explains the following result which is interesting because if $\tilde{H}$ \textbf{was} a fundamental solution then the limit must be zero. 
\end{remark}
\begin{corollary} 
\label{strangebehavior} Let $\Psi$ is the general entropy as given in equation (\ref{generalentropy}) and $\overline{H}$ as above. If $\widetilde{H}=\frac{1}{V(F)}\overline{H}=(4\pi\tau)^{-(n+p)/2}e^{-\widetilde{h}}$ for $V(F)$ denotes the volume of $(F,g_{F})$ then $\lim_{\tau\rightarrow 0}\Psi(g_{M},\tau,\widetilde{h})=\infty$.
\end{corollary}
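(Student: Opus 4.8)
The plan is to unwind the definition of the general entropy $\Psi(g_M,\tau,\widetilde h)$ for the $(n+p)$-dimensional metric $g_M$ evaluated at the modified fundamental solution $\widetilde H = (4\pi\tau)^{-(n+p)/2}e^{-\widetilde h}$, and track each term as $\tau\to 0$. Writing $\widetilde H = \frac{1}{V(F)}\overline H = \frac{1}{V(F)} H e^{-u}$ with $H=(4\pi\tau)^{-n/2}e^{-h}$, one gets the pointwise relation $\widetilde h = h + u + \tfrac{p}{2}\log(4\pi\tau) + \log V(F)$. The key observation is that $\widetilde H$ integrates to $1$ over $M$ (since $H$ integrates to $1$ over $N$ and the fiber factor $e^{-u}$ against the warped volume $e^{pu}d\mu_F$ over $F$ cancels $V(F)$ up to the $(4\pi\tau)^{p/2}$ discrepancy that is absorbed into the normalization), so $\Psi$ is being evaluated on a genuine probability density, and the divergence must come from the $\log\tau$ terms that do not cancel because $\widetilde H$ is \emph{not} a fundamental solution on $M$.

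First I would write $\Psi(g_M,\tau,\widetilde h) = \int_M \big(\tau(|\nabla \widetilde h|^2 + \RR_M) + \widetilde h - (n+p)\big)\widetilde H\, d\mu_M$ (the standard $\mathcal W$-type integrand; I will use whichever normalization equation (\ref{generalentropy}) fixes). Then I would split this using $\widetilde h = h + u + \tfrac p2\log(4\pi\tau) + \log V(F)$: the term $\int_M \tfrac p2\log(4\pi\tau)\,\widetilde H\,d\mu_M = \tfrac p2\log(4\pi\tau)$ because $\widetilde H$ is a probability density, and this is the source of the divergence: $\log(4\pi\tau)\to -\infty$. Wait — that goes to $-\infty$, so I must be careful about the sign convention in (\ref{generalentropy}); depending on it, either $\Psi$ or $-\Psi$ has the $\tfrac p2\log(4\pi\tau)$ term, and the claim $\Psi\to+\infty$ dictates which. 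The remaining pieces $\int_M(\tau|\nabla\widetilde h|^2 + \tau\RR_M + h + u - (n+p) + \log V(F))\widetilde H\,d\mu_M$ must then be shown to stay bounded (or to diverge more slowly): the gradient term splits as $|\nabla\widetilde h|^2 = |\nabla h|^2 + 2\nabla h\cdot\nabla u + |\nabla u|^2$ on the $N$-factor, and $\tau\int |\nabla h|^2 H e^{(p-?)u}$-type integrals are controlled exactly as in Perelman's analysis and in Theorem \ref{conjandRD}(c), which already gives $\int hH\Phi\,d\mu_N$ a finite limit; the scalar curvature $\RR_M = \RR_N - 2p\triangle u - p(p+1)|\nabla u|^2$ (the warped-product formula with $F$ Ricci flat) is bounded on the closed manifold for each fixed flow, so $\tau\RR_M\to 0$ in the integral; and $\int u\,\widetilde H\,d\mu_M$ and $\int h\,\widetilde H\,d\mu_M$ converge by the same concentration argument behind Theorem \ref{conjandRD}(c) (the measure $\widetilde H\,d\mu_M$ concentrates on the fiber over $y$, on which $h$ behaves like $\tfrac{d_T^2}{4\tau}$ with the $\tfrac n2$-type finite limit when integrated).

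The main obstacle is making the convergence of $\int_M h\,\widetilde H\,d\mu_M$ and $\tau\int_M|\nabla h|^2\,\widetilde H\,d\mu_M$ rigorous, since unlike in Theorem \ref{conjandRD}(c) there is no extra solution $\Phi$ damping the integrand and no clean cancellation — one is integrating $h$ (which blows up like $d_T^2/4\tau$ near the fiber over $y$) against the near-delta density $\widetilde H$ spread over a whole $p$-dimensional fiber rather than a point. I would handle this by adapting the Gaussian-type upper and lower bounds for $H$ on $N$ (available since $N$ is closed and the flow is smooth on $[0,T)$, via the heat-kernel estimates used implicitly in Theorem \ref{conjandRD}) to show $\int_M h\,\widetilde H\,d\mu_M \to \tfrac n2$ and $\tau\int_M |\nabla\widetilde h|^2\,\widetilde H\,d\mu_M \to \tfrac n2$, exactly mirroring the classical computation $\lim_{\tau\to0}\Psi(\overline g,\tau,\overline h)=0$ for a true fundamental solution, \emph{except} that here the dimension count gives $n$ where $n+p$ is subtracted in the integrand, leaving a residual $-\tfrac p2\log(4\pi\tau) + O(1)\to+\infty$. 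Once the bounded terms are pinned down, the conclusion $\lim_{\tau\to0}\Psi(g_M,\tau,\widetilde h)=\infty$ follows immediately from the surviving $\log\tau$ term; I would close by remarking this quantifies the statement in the preceding Remark that $\widetilde H$ fails to be a fundamental solution on $M$ precisely because it sees the wrong effective dimension.
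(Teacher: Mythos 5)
Your overall strategy --- decompose $\Psi(g_{M},\tau,\widetilde h)$ into the warped entropy $\Psi_{w}$ plus correction terms, show the former stays bounded, and extract the divergence from the $\ln(4\pi\tau)$ term --- is the same as the paper's. But there are two concrete problems. First, your pointwise relation $\widetilde h = h+u+\tfrac p2\ln(4\pi\tau)+\ln V(F)$ is wrong on both counts: from $H=\overline H e^{pu}$ and Lemma \ref{baseandwhole}(b) one gets $\widetilde h = h+pu-\tfrac p2\ln(4\pi\tau)+\ln V(F)$, so the divergent contribution to $\int_{M}\widetilde h\,\widetilde H\,d\mu_{M}$ is $-\tfrac p2\ln(4\pi\tau)\to+\infty$, not $+\tfrac p2\ln(4\pi\tau)\to-\infty$. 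You notice that your sign comes out wrong and then declare that ``the claim $\Psi\to+\infty$ dictates which'' --- that is circular: the sign of this term \emph{is} the content of the corollary and must be computed, not inferred from the conclusion you are trying to prove.

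Second, the step you flag as ``the main obstacle'' (controlling $\int h\,\widetilde H$ and $\tau\int|\nabla h|^2\widetilde H$ in the absence of a damping factor $\Phi$) is not an obstacle at all: the constant function $\Phi\equiv 1$ solves $\partial_{t}\Phi=\triangle\Phi$, so Proposition \ref{limit0}(b) applies with $\Phi=1$ and gives $\rho_{1}(t)=\Psi_{w}(g,u,\tau,h)\to 0$ directly. This is exactly what the paper does, and it makes your proposed re-derivation of Gaussian heat-kernel bounds unnecessary. With the corrected sign and that citation, the decomposition reads $\Psi(g_{M},\tau,\overline h)=V(F)\Psi_{w}(g,u,\tau,h)+pV(F)\int_{N}\bigl(u-1-\tfrac12\ln(4\pi\tau)\bigr)H\,d\mu_{N}$; the first term tends to $0$, the $u$-term is bounded because $|u|$ is uniformly controlled along the flow, and the $-\tfrac12\ln(4\pi\tau)$ term against the bounded positive mass $\int_{N}H\,d\mu_{N}$ forces the limit $+\infty$. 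Your closing remark about $\widetilde H$ ``seeing the wrong effective dimension'' is a good heuristic, but as written the proposal does not yet establish the sign of the divergence.
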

%\begin{remark} Compared with remark 1.3, tt is interesting to note that . 
% moreover  the associated monotone functional $\mu(\overline{g},\tau)$ has $\lim_{\tau\rightarrow 0}\mu(\overline{g},\tau)=0$. This does not match ......or delete.
%\end{remark}
%The idea of the proof of these theorems is simple: we just try to adapt the monotone functional framework to our particular situation and modify the proof suggested by L. Ni in \cite{Ni06} wherever necessary. 
The organization of this paper is as follows. In section 2, we discuss the adaptation of the Ricci flow for a warped product and an equivalent system obtained via diffeomorphisms. In section 3, we derive modified monotonicity formulas and introduce  associated monotone functionals. In section 4, we prove several gradient estimates w.r.t the equivalent system. Section 5 collects the proofs of above results.\\

{\bf Acknowledgements:} The author is grateful to Professor Xiaodong Cao for continuous discussion and encouragement. 

\section{Basic Setting for Ricci Flow on Warped Products}

In this section, we will give a brief review of basic equations for the Ricci flow on a warped product and discuss diffeomorphisms that transform the system. 

\subsection{Basics of Ricci Flow on Warped Products}

Let $(M,\overline{g})$ be a warped product as in (\ref{warp}) then by standard computation (for example, see \cite{besse}), 
\begin{align}
\triangle_{g_{M}}h&=\triangle_{g_{N}}h+p\left\langle{\nabla{u},\nabla{h}}\right\rangle_{g_{N}},\\
d\mu_{M}&=d\mu_{N} f^{p} d\mu_{F},\\
%\end{align}
%\begin{remark} This is related to the analysis on Bakry-Emery Ricci tensor. 
%\end{remark}
%Furthermore, standard computation shows the following decomposition of the curvature operator. 
%\begin{lemma} 
%\label{curwarp}
%On a warped product, at each point, there exist coordinates such that the curvature of M view as an operator on two forms is given by\\
%\[\text{Rm}_{M}=
 %\left( \begin{array}{cc}
%-\frac{\text{Hess}(f)}{f} & 0 \\
%0 & \text{Rm}_{N} \end{array} \right)=
%\left( \begin{array}{cc}
%-\text{Hess}(u)-du\otimes du & 0 \\
%0 & \text{Rm}_{N} \end{array} \right).
%\]
%\end{lemma} 
%\begin{remark} The lemma implies that, for a constant Q, $(N^{n}\times S^{1},g_{N}+Qf^2d\theta^2)$ is isometric to $(N^{n}\times S^{\sqrt{Q}},g_{N}+f^2d\theta^2)$. % This might be useful when considering dilation limits. 
%\end{remark}
%\begin{remark} As N is closed, f attains its minimum (maximum) where $\triangle f\geq0 \text{ ($\leq 0$)}$ so M can not have positive curvature operator. An example is a 2-torus equipped with the metric $dr^2+f^2d\theta^2$. 
%\end{remark}
%The Ricci curvature and scalar curvature are given by
%\begin{align} 
\label{Rconwarp1}
\text{Rc}_{M} &= \text{Rc}_{N}-\frac{p}{f}\text{Hess}(f)-(f\triangle f+(p-1)|\nabla{f}|^2)g_{F},\\
\RR_{M} &= \RR_{N}-2p\frac{\triangle f}{f}-p(p-1)\frac{|\nabla{f}|^2}{f^2}.
\end{align}
\begin{remark} The Laplacian and Ricci curvature formulas suggest some connection to the analysis on Bakry-Emery Ricci tensor. 
\end{remark}
\begin{lemma}
Let $(M,g_{M}(t))$, $0\leq t\leq T$, be a solution to the Ricci flow and $g_{M}(0)$ is a warped product metric as in (\ref{warp}). Ricci flow preserves that warped structure and, in terms of the components, it is given by the following system:
\begin{align}
\label{warpedbeforelie}
\frac{\partial (g_{N})_{ij}}{\partial t} &=-2(\text{Rc}_{N})_{ij}+2p\frac{f_{ij}}{f}\\
\frac{df}{dt} &=\triangle_{g} f +(p-1)\frac{|\nabla{f}|^2}{f} \nonumber \\
\frac{du}{dt} &=\triangle_{g_{N}} u+p|\nabla u|^2=\Delta_{g_{M}}u \nonumber
\end{align}
\end{lemma}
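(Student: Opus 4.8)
The plan is to obtain all three equations directly from the warped--product Ricci identity (\ref{Rconwarp1}), after first checking that the tensor $-2\,\text{Rc}_{M}$ already has warped--product shape. I would begin by fixing the splitting $TM=TN\oplus TF$ and reading off from (\ref{Rconwarp1}) that $\text{Rc}_{M}$ has no mixed $TN$-$TF$ block and that its $TF$-$TF$ block is the scalar function $-(f\triangle f+(p-1)|\nabla f|^{2})$ --- which is constant along each fiber --- times $g_{F}$ (this uses Ricci--flatness of $(F,g_{F})$, already built into (\ref{Rconwarp1})). Hence the class of metrics $g_{N}+f^{2}g_{F}$ with $f$ constant on fibers is preserved by $\partial_{t}g=-2\,\text{Rc}$; combined with uniqueness of the Ricci flow on a closed manifold, the solution issuing from (\ref{warp}) stays of this form, and the flow collapses to a coupled system for $\bigl(g_{N}(t),f(t)\bigr)$. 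I expect this preservation statement to be the only point that is not pure bookkeeping: one must invoke short--time existence and uniqueness (DeTurck) to know the warped ansatz genuinely produces the Ricci flow, rather than being merely formally consistent.

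With that in hand, I would project $\partial_{t}g_{M}=-2\,\text{Rc}_{M}$ onto the two blocks. On $TN$, (\ref{Rconwarp1}) gives at once
\[
\partial_{t}(g_{N})_{ij}=-2(\text{Rc}_{N})_{ij}+\frac{2p}{f}\nabla_{i}\nabla_{j}f,
\]
which is the first equation with $f_{ij}:=\nabla_{i}\nabla_{j}f$. On $TF$, since $g_{F}$ is $t$-independent we have $\partial_{t}(f^{2}g_{F})=2f(\partial_{t}f)g_{F}$, while (\ref{Rconwarp1}) gives $-2\,\text{Rc}_{M}|_{TF}=2\bigl(f\triangle f+(p-1)|\nabla f|^{2}\bigr)g_{F}$; matching the coefficients of $g_{F}$ yields $\partial_{t}f=\triangle f+(p-1)|\nabla f|^{2}/f$, the second equation. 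Here and below $\triangle$ and $\nabla$ denote the Laplacian and gradient of the evolving base metric $g_{N}(t)$, and $f$ remains constant on fibers since its evolution only involves $N$-directions.

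Finally I would pass to $u=\log f$. Since $\nabla u=\nabla f/f$ we have $|\nabla u|^{2}=|\nabla f|^{2}/f^{2}$ and $\triangle_{g_{N}}u=\triangle f/f-|\nabla f|^{2}/f^{2}$, so $\triangle f/f=\triangle_{g_{N}}u+|\nabla u|^{2}$. Dividing the $f$-equation by $f$,
\begin{align*}
\partial_{t}u&=\frac{\triangle f}{f}+(p-1)\frac{|\nabla f|^{2}}{f^{2}}\\
&=\bigl(\triangle_{g_{N}}u+|\nabla u|^{2}\bigr)+(p-1)|\nabla u|^{2}=\triangle_{g_{N}}u+p|\nabla u|^{2}.
\end{align*}
The remaining identity $\triangle_{g_{N}}u+p|\nabla u|^{2}=\triangle_{g_{M}}u$ is just the warped--product Laplacian formula $\triangle_{g_{M}}h=\triangle_{g_{N}}h+p\langle\nabla u,\nabla h\rangle_{g_{N}}$ recorded at the start of the section, taken at $h=u$. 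This finishes the third equation and hence the lemma; apart from the preservation claim, every step is a one--line computation from (\ref{Rconwarp1}) and that Laplacian formula.
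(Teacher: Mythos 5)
Your proof is correct and follows essentially the same route as the paper, which simply states that one checks the component system reproduces $\partial_t g_M=-2\,\text{Rc}_M$ via the warped-product Ricci formula (\ref{Rconwarp1}) and then invokes uniqueness of the Ricci flow. You have merely written out the block-by-block verification and the change of variable $u=\log f$ that the paper leaves implicit.
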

\begin{proof}
Suppose $(g_{N},f)$ evolves as above then we can check that $g_{M}$ evolves by the Ricci flow. By the uniqueness theorem for Ricci flow, the result follows.
\end{proof}
Since u satisfies the heat equation, the maximum principle applies that if $u(.,0)\leq C$ then $u(.,t)\leq C$ as long as the flow exists. 
%\begin{equation}
%\label{minmaxuf} 
%|u| \leq C_{0} \text{ ($e^{-C_{0}}\leq f \leq e^{C_{0}}$).} 
%\end{equation}
Furthermore, extensive use of the maximum principle yields interior estimates.
\begin{lemma}
\label{generalbound}
Let $(M,g_{M}(t))$, $0\leq t\leq T$, be a solution to the Ricci flow and $g_{M}(0)$ is a warped product metric as in (\ref{warp}). Then for each $\alpha>0$, there exists a constant $C(m,n,\alpha)$ such that if
$$|Rm|_{M}(.,t)<k \text{ for all $t\in [0,\frac{\alpha}{k}]$} $$
then 
$$ |\nabla^{m}u|_{\overline{g}(t)}\leq \frac{C|u(.,0)|_{{L^{\infty}}}}{t^{m/2}} $$
for all $t\in [0,\frac{\alpha}{k}].$
\end{lemma}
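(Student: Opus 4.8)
The plan is to regard $u$ as a function on the closed manifold $M$ which, by the third equation of \eqref{warpedbeforelie}, solves the heat equation $\partial_t u=\Delta_{g_M}u$ along the Ricci flow $\partial_t g_M=-2\Rc_M$, and then to run a Bernstein-type argument in the style of Shi's interior derivative estimates. By a parabolic rescaling $g_M\mapsto k\,g_M$, $t\mapsto kt$ (under which both the hypothesis and the scale-invariant conclusion are unchanged) we may assume $k=1$, so $|\Rm|_M<1$ on $[0,\alpha]$. The base case $m=0$ is exactly the maximum-principle bound $|u(\cdot,t)|\le|u(\cdot,0)|_{L^\infty}$ recorded just above the lemma, and I will induct on $m$. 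In fact $m=1$ is already curvature-free: since $(\partial_t-\Delta)\nabla u=-\Rc*\nabla u$ and $\partial_t g^{-1}=2\Rc$ cancel, one gets $(\partial_t-\Delta)|\nabla u|^2=-2|\nabla^2u|^2$ and $(\partial_t-\Delta)u^2=-2|\nabla u|^2$, so $F_1:=t|\nabla u|^2+u^2$ satisfies $(\partial_t-\Delta)F_1\le0$ and the maximum principle gives $t|\nabla u|^2\le F_1(\cdot,t)\le F_1(\cdot,0)\le|u(\cdot,0)|_{L^\infty}^2$.

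The engine for higher $m$ is the evolution of $|\nabla^m u|^2$. Since $(\partial_t-\Delta)u=0$, iterated commutation of $\nabla^m$ with $\partial_t-\Delta$ (using $\partial_t g=-2\Rc$, $\partial_t\Gamma=\Rm*\nabla$, and the Bochner-type commutators $[\Delta,\nabla^j]$) yields the schematic identity $(\partial_t-\Delta)\nabla^m u=\sum_{i+j=m,\ j\ge 1}\nabla^i\Rm*\nabla^j u$, hence
\[
(\partial_t-\Delta)|\nabla^m u|^2\ \le\ -2|\nabla^{m+1}u|^2\ +\ C\!\!\sum_{i+j=m,\ 1\le j\le m}\!\!|\nabla^i\Rm|\,|\nabla^j u|\,|\nabla^m u|\ +\ C|\nabla^m u|^2 .
\]
For the cross-terms with $1\le j\le m-1$ I combine the inductive bounds $|\nabla^j u|\le C_j|u(\cdot,0)|_{L^\infty}t^{-j/2}$ with Shi's global derivative estimates $|\nabla^i\Rm|_{g(t)}\le C(i,n+p)\,t^{-i/2}$ for $t\in(0,\alpha]$ (applicable because $M$ is closed, hence complete, and the curvature is bounded on $[0,\alpha]$); Young's inequality then dominates each such term by $\tfrac12|\nabla^m u|^2+C|u(\cdot,0)|_{L^\infty}^2 t^{-m}$. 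The only term with $j=m$ is $\Rm*\nabla^m u*\nabla^m u$ (together with the $\Rc$ from $\partial_t g^{-1}$), which is $\le C|\nabla^m u|^2$. Multiplying by $t^m$ and using $t\le\alpha$ gives
\[
(\partial_t-\Delta)\big(t^m|\nabla^m u|^2\big)\ \le\ \frac{C(m,\alpha)}{t}\,\big(t^m|\nabla^m u|^2\big)\ +\ C\,|u(\cdot,0)|_{L^\infty}^2 .
\]

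To neutralize the factor $t^{-1}$ I add lower-order buffers. Set $F_m:=\sum_{j=0}^{m}\Lambda_j\,t^j|\nabla^j u|^2$ with $\Lambda_m=1$; each summand produces, besides a genuinely negative term $-2\Lambda_j t^j|\nabla^{j+1}u|^2$, only a borderline term $j\Lambda_j t^{j-1}|\nabla^j u|^2$ and (for $j<m$, after the same curvature estimates) an admissible source $C|u(\cdot,0)|_{L^\infty}^2$. Choosing $\Lambda_{m-1},\Lambda_{m-2},\dots,\Lambda_0$ in turn large enough, depending only on $m,n,p,\alpha$, so that the negative term at level $j-1$ absorbs the positive term at level $j$, the whole sum telescopes and $(\partial_t-\Delta)F_m\le C\,|u(\cdot,0)|_{L^\infty}^2$. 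Since $M$ is closed the maximum principle needs no cutoff, so $\max_M F_m(\cdot,t)\le\max_M F_m(\cdot,0)+Ct\,|u(\cdot,0)|_{L^\infty}^2=(\Lambda_0+C\alpha)\,|u(\cdot,0)|_{L^\infty}^2$ on $[0,\alpha]$; in particular $t^m|\nabla^m u|^2\le C(m,n,p,\alpha)\,|u(\cdot,0)|_{L^\infty}^2$, which is the claim after undoing the rescaling. I expect the main work to be the bookkeeping of this last step: checking that every curvature cross-term genuinely has the claimed schematic form and that the cascade of absorptions closes with one admissible choice of the $\Lambda_j$ — routine, but to be carried out with the same care as in Shi's original estimates.
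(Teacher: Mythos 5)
Your argument is correct and is precisely the route the paper takes: the paper's proof consists of observing that $u$ solves $\partial_t u=\triangle_{g_M}u$ with $|u(\cdot,0)|_{L^\infty}$ preserved and then invoking "the method of Shi's estimates" (citing Lemma 3.6 of Brendle's book), which is exactly the Bernstein-type scheme with the weighted sum $F_m=\sum_j\Lambda_j t^j|\nabla^j u|^2$ that you carry out. You have simply written out the details the paper delegates to the reference.
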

\begin{proof}
Since $\frac{\partial}{\partial t}u=\triangle_{g_{M}}u$ and $|u(.,0)|_{L^{\infty}}$ is preserved, the method of Shi's estimates applies. For a detailed calculation, see lemma 3.6 of \cite{brendlebook10}.  
\end{proof}
\begin{remark} The essence of this lemma is that the constant only depends on degree and dimension. Therefore, under suitable dilation limit analysis, it holds for any small compact interval under a uniform curvature bound.
\end{remark}

\subsection{Transform by Diffeomorphisms}
Now we consider the family of diffeomorphisms generated by $-p\nabla u$, $\frac{\partial}{\partial t}\varphi (t)(x)=(-p\nabla u)(\varphi(t)(x))$. Pullbacks $\tilde{g}(t)=\varphi^{\ast}(t)(g(t))$, $\tilde{u}(t)=\sqrt{p}\varphi^{\ast}(t)(u(t))=\sqrt{p} u(t)\circ \varphi(t)$ yield
\begin{align*}
\frac{\partial}{\partial t}\tilde{g}(t)& =L_{-p\nabla u}(\varphi^{\ast}(t)(g(t)))+\varphi^{\ast}(t)(\frac{\partial}{\partial t}g(t))\\
&=\varphi^{\ast}(t)(\frac{\partial}{\partial t}g(t)+L_{-p\nabla u}g(t))=\varphi^{\ast}(t)(-2\text{Rc}+2p du\otimes du)\\
&=-2\widetilde{\Rc}+2 d\tilde{u}\otimes \tilde{u},\\
\frac{\partial}{\partial t}\tilde{u}(t)& =\sqrt{p}L_{-p\nabla u}(\varphi^{\ast}(t)(u(t)))+\sqrt{p}\varphi^{\ast}(t)(\frac{\partial}{\partial t}u(t))\\
&=\sqrt{p}\varphi^{\ast}(t)(\frac{\partial}{\partial t}u(t)+L_{-p\nabla u}u(t))=\sqrt{p}\varphi^{\ast}(t)(\triangle u)=\triangle \tilde{u}.
\end{align*}
So system (\ref{warpedbeforelie}) is transformed into the following system on N (we abuse notation here as tildes are removed but the context should make clear what system is used)
\begin{align}
\mathcal{S}&=\Rc-du\otimes du \nonumber\\
\label{warpedLie}
\frac{\partial g}{\partial t} &=-2\text{Rc}+2du\otimes du=-2\mathcal{S}\\
\frac{\partial u}{\partial t} &=\triangle u.\nonumber
\end{align}
\begin{remark} The observation above implies that results from \cite{LS11} extend to a slightly more general setting: the fiber can be any Ricci flat manifold instead of $S^1$.
\end{remark}
\noindent Then the Christoffel symbols evolve by
\begin{align*}
\frac{\partial}{\partial t}\Gamma_{ij}^{k}&=-g^{kl}(\nabla_{i}\mathcal{S}_{jl}+\nabla_{j}\mathcal{S}_{il}-\nabla_{l}\mathcal{S}_{ij})\\
&=g^{kl}(-\nabla_{i}\text{Rc}_{jl}-\nabla_{j}\text{Rc}_{il}+\nabla_{l}\text{Rc}_{ij}+2\nabla_{i}\nabla_{j}u \partial_{l}u).
\end{align*}
\begin{lemma} 
\label{evolLap}
If $(N,u(.,t),g(t))$ is a solution to (\ref{warpedLie}) then the Laplacian acting on function evolves by 
\begin{align}
\frac{\partial}{\partial t}\Delta=2\mathcal{S}_{ij}\cdot \nabla_{i}\nabla_{j}-2\Delta{u}\left\langle{\nabla{u},\nabla(.)}\right\rangle
\end{align}\end{lemma}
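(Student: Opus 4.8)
The plan is to compute $\frac{\partial}{\partial t}(\Delta\phi)$ for a time-independent test function $\phi$ by differentiating the coordinate expression $\Delta\phi = g^{ij}(\partial_i\partial_j\phi - \Gamma_{ij}^k\partial_k\phi)$. Two pieces contribute: the variation of the inverse metric $g^{ij}$ and the variation of the Christoffel symbols $\Gamma_{ij}^k$. Since $\frac{\partial g}{\partial t} = -2\mathcal{S}$, we have $\frac{\partial}{\partial t}g^{ij} = 2\mathcal{S}^{ij}$, so the first piece yields $2\mathcal{S}^{ij}\nabla_i\nabla_j\phi = 2\mathcal{S}_{ij}\nabla_i\nabla_j\phi$ after raising indices, which is exactly the first term in the claimed formula. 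The second piece is $-g^{ij}(\frac{\partial}{\partial t}\Gamma_{ij}^k)\partial_k\phi$, and here I would substitute the evolution of the Christoffel symbols already recorded just above the lemma.

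The main computation is therefore to contract $g^{ij}$ against $\frac{\partial}{\partial t}\Gamma_{ij}^k = g^{kl}(-\nabla_i\text{Rc}_{jl} - \nabla_j\text{Rc}_{il} + \nabla_l\text{Rc}_{ij} + 2\nabla_i\nabla_j u\,\partial_l u)$. For the Ricci terms, the standard identity holds: $g^{ij}(-\nabla_i\text{Rc}_{jl} - \nabla_j\text{Rc}_{il} + \nabla_l\text{Rc}_{ij}) = -\nabla_l \RR + 2\,\text{div}(\text{Rc})_l - \nabla_l\RR$... more carefully, using the contracted second Bianchi identity $g^{ij}\nabla_i\text{Rc}_{jl} = \frac{1}{2}\nabla_l\RR$, the three Ricci terms contract to $-\frac{1}{2}\nabla_l\RR - \frac{1}{2}\nabla_l\RR + \nabla_l\RR = 0$. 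This is the familiar fact that in pure Ricci flow $\frac{\partial}{\partial t}\Delta = 2\text{Rc}_{ij}\nabla_i\nabla_j$, with no first-order term. Hence the only surviving contribution from the $\Gamma$-variation is the extra warping term $-g^{ij}g^{kl}(2\nabla_i\nabla_j u\,\partial_l u)\partial_k\phi = -2(\Delta u)\langle\nabla u,\nabla\phi\rangle$, which is precisely the second term in the statement.

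Combining the two pieces gives $\frac{\partial}{\partial t}(\Delta\phi) = 2\mathcal{S}_{ij}\nabla_i\nabla_j\phi - 2(\Delta u)\langle\nabla u,\nabla\phi\rangle$, as claimed; the term $2\mathcal{S}^{ij}\nabla_i\nabla_j\phi$ from the metric variation and the $-2(\Delta u)\langle\nabla u,\nabla(\cdot)\rangle$ term from the $\Gamma$ variation are the only survivors once the Bianchi cancellation is applied.

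I do not expect a serious obstacle here — it is a routine Ricci-flow-style computation. The one point requiring a little care is bookkeeping the Bianchi cancellation among the three Ricci derivative terms so that no spurious $\nabla\RR$ term is left over, and making sure the symmetrization $\nabla_i\nabla_j u = \nabla_j\nabla_i u$ is used correctly when contracting against the symmetric $g^{ij}$. One should also note that $\mathcal{S}_{ij}$ is symmetric, so writing $\mathcal{S}_{ij}\nabla_i\nabla_j$ (rather than $\mathcal{S}^{ij}\nabla_i\nabla_j$) is harmless given the summation convention with the metric understood.
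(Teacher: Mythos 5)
Your proposal is correct and follows essentially the same route as the paper: differentiate the coordinate expression $g^{ij}(\partial_i\partial_j-\Gamma_{ij}^k\partial_k)$, read off $2\mathcal{S}^{ij}\nabla_i\nabla_j$ from the variation of the inverse metric, and use the contracted second Bianchi identity to cancel the Ricci contributions in the Christoffel variation, leaving only $-2\Delta u\,\langle\nabla u,\nabla(\cdot)\rangle$. No gaps.
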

\begin{proof} We compute
\begin{align*} 
\frac{\partial}{\partial t}\Delta&=\frac{\partial}{\partial t}(g^{ij}\nabla_{i}\nabla_{j})=\frac{\partial}{\partial t}(g^{ij}(\partial_{i}\partial_{j}-\Gamma_{ij}^{k}\partial_{k}))\\
&=(-\frac{\partial}{\partial t}g_{ij})\nabla_{i}\nabla_{j}-g^{ij}(\frac{\partial}{\partial t}\Gamma_{ij}^{k})\partial_{k}.
\end{align*}
Using the evolution equation for $\Gamma_{ij}^{k}$ yields
\begin{align*}
g^{ij}(\frac{\partial}{\partial t}\Gamma_{ij}^{k})\partial_{k}&=g^{kl}(-2g^{ij}\nabla_{i}\text{Rc}_{jl}+\nabla_{l}\RR)+2g^{kl}\Delta{u}\partial_{l}u\partial_{k}&\\
&=2\Delta{u}\left\langle{\nabla{u},\nabla(.)}\right\rangle,
\end{align*}
where we use the contracted 2nd Bianchi identity. The result follows immediately. 
\end{proof}

Now we derive evolution equations for some geometrical quantities. Recall $S=\text{tr}(\mathcal{S})=\RR-|\nabla u|^2$ and we compute:
\begin{align*}
\frac{\partial}{\partial t}|\nabla u|^2 &=\frac{\partial}{\partial t}(g^{ij}\nabla_{i}u\nabla_{j}u)=2\mathcal{S}(\nabla u,\nabla u)+2\left\langle{\nabla u,\nabla \frac{\partial}{\partial t}u}\right\rangle,\\
&=2\text{Rc}(\nabla u,\nabla u)-2|\nabla {u}|^4+2\left\langle{\nabla u,\nabla \triangle u}\right\rangle,\\
\triangle |\nabla u|^2 &=2\left\langle{\nabla u,\nabla \frac{\partial}{\partial t}u}\right\rangle+2\text{Rc}(\nabla u,\nabla u)+2|\text{Hess}u|^2 \text{ (Bochner's formula).}
\end{align*}
Combining equations above yields
\begin{equation}
\label{evolgradu}
\Box |\nabla u|^2 =-2|\text{Hess}u|^2-2|\nabla {u}|^4.
\end{equation}
\noindent If $\frac{\partial}{\partial t}g=v$ then $\frac{\partial}{\partial t}R=-\triangle{\text{trace}(v)}+\text{div}(\text{div}v)-(v,\text{Rc})$. In our case
\begin{align*}
\text{div}(\text{div}2\text{Rc})&=\nabla_{i}2\nabla_{j}\text{Rc}_{ij}=\nabla_{i}\nabla_{i}\RR=\triangle \RR,\\
\text{div}(\text{div} du\otimes du) &=\nabla_{i}(\nabla_{j}(\nabla_{i}u\nabla_{j}u))=\frac{1}{2}\triangle |\nabla u|^2+\left\langle{\nabla u,\nabla \triangle u}\right\rangle+|\triangle u|^2,\\
\frac{\partial}{\partial t} \RR &=-\triangle(-2S)-\triangle\RR+\triangle |\nabla u|^2+2\left\langle{\nabla u,\nabla \triangle u}\right\rangle\\
&+2|\triangle u|^2+2|\text{Rc}|^2-2\text{Rc}(\nabla u,\nabla u)\\
&=\triangle S+2\left\langle{\nabla u,\nabla \triangle u}\right\rangle+2|\triangle u|^2+2|\text{Rc}|^2-2\text{Rc}(\nabla u,\nabla u).
\end{align*} 
Combining equations above yields
\begin{equation}
\label{evolS}
\frac{\partial}{\partial t}S =\triangle S+2|\triangle u|^2+2|\mathcal{S}_{ij}|^2.
\end{equation}
\begin{remark} System (\ref{warpedLie}) and some evolution equations above appeared in \cite{List08} with a constant $\alpha_{n}$ associated with the term $du\otimes du$. However, in case $\alpha_{n}\geq 0$ if letting $\tilde{u}=\sqrt{\alpha_{n}}u$ recovers (\ref{warpedLie}). So every result in section 4 holds for $\alpha_{n}\geq 0$ as well.    
\end{remark}
\begin{remark} A generalization of that system is so-called the Ricci-Harmonic flow first introduced by R. Muller in \cite{Muller09} and it is interesting to extend the result here for that setting.
\end{remark}

%%%%%%%%%%%%%%%%%%%%%%%%%%%%%%%%%%%%%%%%%%%%%%%%%%%%%%%%%%%%%%%%%%%%%%
\section{\textbf{Monotinicity Formulae}}%%%%%%%%%%%%%%%%%%
%%%%%%%%%%%%%%%%%%%%%%%%%%%%%%%%%%%%%%%%%%%%%%%%%%%%%%%%%%%%%%%%%%%%%%
We shall derive the adapted and modified forms of monotonicity formulas and associated functionals, first introduced by Perelman in \cite{perelman1}, to the warped product setting in (\ref{warpedbeforelie}) and (\ref{warpedLie}).\\

For a Ricci flow solution on a closed manifold $(M^{m},g_{M}(t))$, Perelman introduced the following funtionals.
\begin{equation}
\text{Energy: }\mathcal{F}(g_{M},\overline{h})=\int_{M}(\RR_{M}+|\nabla\overline{h}|^2)e^{-\overline{h}}d\mu_{M}
\end{equation}
restricted to $\int_{M}e^{-\overline{h}}d\mu_{M}=1$.
\begin{equation}
\label{generalentropy}
\text{Entropy: } \Psi(g_{M},\tau,\overline{h})=\int_{M}\Big(\tau(|\nabla \overline{h}|^2+\RR_{M})+\overline{h}-m\Big)(4\pi\tau)^{-m/2}e^{-\overline{h}}d\mu_{M}
\end{equation}
restricted to $\int_{M}(4\pi\tau)^{-m/2}e^{-\overline{h}}d\mu_{M}=1$.\\
Furthermore, he computed the evolution equations for these functionals when the test function $\overline{H}$($=e^{-\overline{h}}$ for energy or $=(4\pi\tau)^{-m/2}e^{-\overline{h}}$ for entropy) satisfies the conjugate heat equation $(-\partial_{t}-\triangle+\RR_{M})\overline{H}=0$ and obtained monotone formulae:
\begin{align}
\frac{d}{dt}\mathcal{F}&=2\int_{M}\Big(|\text{Rc}_{M}+\text{Hess}_{M}(\overline{h})|^2\Big)e^{-\overline{h}}d\mu_{M},\\
\frac{d}{dt}\Psi&=2\tau\int_{M}\Big(|\text{Rc}+\text{Hess}_{M}\overline{h}-\frac{g_{M}}{2\tau}|^2\Big)(4\pi\tau)^{-m/2}e^{-\overline{h}}d\mu_{M}.
\end{align}
First, to adapt these formulas to our setting, we observe the following relations.
\begin{lemma}
\label{baseandwhole}
{\bf a.} If $\overline{H}=e^{-\overline{h}}$ and $H=\overline{H}e^{pu}=e^{-h}$ then 
\begin{align*}
h &=\overline{h}-pu, \\
\overline{h}_{t}&=|\nabla\overline{h}|^2-\triangle_{M}\overline{h}-\RR_{M} \text{ iff}\\
h_{t}&=-S-\triangle h+\nabla h(\nabla h+p\nabla u).
\end{align*}
{\bf b.} If $\overline{H}=(4\pi\tau)^{-(n+p)/2}e^{-\overline{h}}$ and $H=\overline{H}e^{pu}=(4\pi\tau)^{-n/2}e^{-h}$ then 
\begin{align*}
h &=\overline{h}-pu+\frac{p}{2}\ln(4\pi\tau) \text{ and}\\
\overline{h}_{t}&=|\nabla\overline{h}|^2-\triangle_{M}\overline{h}-\RR_{M}+\frac{n+p}{2\tau} \text{ iff}\\
h_{t}&=-S-\triangle h+\nabla h(\nabla h+p\nabla u)+\frac{n}{2\tau}.
\end{align*}
\end{lemma}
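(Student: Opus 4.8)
The plan is to treat parts (a) and (b) uniformly by writing $\overline{H}=(4\pi\tau)^{-a/2}e^{-\overline{h}}$ and $H=(4\pi\tau)^{-b/2}e^{-h}$, where $(a,b)=(0,0)$ in part (a) and $(a,b)=(n+p,n)$ in part (b). First I would record the pointwise relation between $h$ and $\overline{h}$: taking logarithms in $H=\overline{H}e^{pu}$ gives $-h=\tfrac{b-a}{2}\ln(4\pi\tau)-\overline{h}+pu$, i.e. $h=\overline{h}-pu+\tfrac{a-b}{2}\ln(4\pi\tau)$, which specializes to the two displayed first identities (no logarithm when $a=b$).

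Next I would translate the conjugate heat equation $\Box_{M}^{\ast}\overline{H}=(-\partial_{t}-\triangle_{M}+\RR_{M})\overline{H}=0$ into a scalar equation for $\overline{h}$. Using the standard identity $\triangle_{M}(e^{-\overline{h}})=e^{-\overline{h}}(|\nabla\overline{h}|^2-\triangle_{M}\overline{h})$ (the $g_{M}$-norm coincides with the $g_{N}$-norm here because $\overline{h}$ is fibre-constant and $g_{M}$ is block diagonal) together with $\partial_{t}\tau=-1$, so that $\partial_{t}(4\pi\tau)^{-a/2}=\tfrac{a}{2\tau}(4\pi\tau)^{-a/2}$, dividing $\Box_{M}^{\ast}\overline{H}=0$ by the positive function $\overline{H}$ yields
\[
\overline{h}_{t}=|\nabla\overline{h}|^2-\triangle_{M}\overline{h}-\RR_{M}+\frac{a}{2\tau},
\]
which is exactly the stated hypothesis on $\overline{h}$ in each part ($a=0$ or $a=n+p$).

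The heart of the argument is to substitute $\overline{h}=h+pu-\tfrac{a-b}{2}\ln(4\pi\tau)$ into this equation and rewrite all $g_{M}$-quantities in terms of base data on $N$. I would use: the warped Laplacian formula $\triangle_{M}\overline{h}=\triangle_{N}\overline{h}+p\langle\nabla u,\nabla\overline{h}\rangle$; the scalar curvature formula of Section 2, which with $f=e^{u}$ becomes $\RR_{M}=\RR_{N}-2p\triangle u-p(p+1)|\nabla u|^2$ and then, inserting $S=\RR_{N}-p|\nabla u|^2$, reads $\RR_{M}=S-2p\triangle u-p^{2}|\nabla u|^2$; and the evolution $u_{t}=\triangle_{g_{N}}u+p|\nabla u|^2$ from system (\ref{warpedbeforelie}), which also contributes $\tfrac{a-b}{2\tau}$ through $\partial_t$ of the logarithm term. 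Expanding $|\nabla(h+pu)|^2$ and $\triangle_{N}(h+pu)$ and collecting, the terms $p\triangle u$ and $p^{2}|\nabla u|^2$ cancel between the two sides, the coefficient of $\langle\nabla h,\nabla u\rangle$ comes out to $p$, and the $\tfrac{1}{\tau}$-terms combine to $\tfrac{b}{2\tau}$; what remains is precisely $h_{t}=-S-\triangle h+\langle\nabla h,\nabla h+p\nabla u\rangle+\tfrac{b}{2\tau}$, i.e. the claimed equation for $h$ ($b=0$ or $b=n$). Since every step is division by a positive function or an algebraic substitution, the chain of equivalences is reversible, which gives the ``iff''.

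The only real difficulty is bookkeeping rather than conceptual: one must consistently distinguish the unsubscripted $\nabla,\triangle$ (the $N$-operators) from $\triangle_{M},\RR_{M}$, convert the Section 2 identities from $f$ to $u$ via $\tfrac{\nabla f}{f}=\nabla u$ and $\tfrac{\triangle f}{f}=\triangle u+|\nabla u|^2$, and keep in mind that the relevant flow here is the untransformed system (\ref{warpedbeforelie}) — for which $u$ solves $u_{t}=\triangle_{g_{M}}u$ — rather than the Lie-transported system (\ref{warpedLie}). No new idea beyond these substitutions is needed.
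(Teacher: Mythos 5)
Your proposal is correct and follows essentially the same route as the paper: substitute $\overline{h}=h+pu$ (plus the $\ln(4\pi\tau)$ shift in part (b)) into the $\overline{h}$-equation, expand using the warped Laplacian and scalar curvature formulas of Section 2 together with $u_{t}=\triangle_{g_{N}}u+p|\nabla u|^{2}$, and cancel; your uniform $(a,b)$ bookkeeping just merges the paper's ``part (b) is similar'' into one computation. The sign remark about the logarithm term contributing $\tfrac{a-b}{2\tau}$ is loosely worded (it contributes $-\tfrac{a-b}{2\tau}$ to $h_{t}$), but your final combination $\tfrac{b}{2\tau}$ is right.
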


\begin{proof} 

{\bf a.} We compute 
\begin{align*}
\overline{h}_{t}&=|\nabla\overline{h}|^2-\triangle_{M}\overline{h}-\RR_{M}\\
&=|\nabla\overline{h}|^2-\triangle\overline{h}-p\nabla\overline{h}\nabla{u}-\RR_{N}+2p\triangle u+p(p+1)|\nabla{u}|^2,\\
h_{t}&=\overline{h}_{t}-pu_{t}\\
&=|\nabla\overline{h}|^2-\triangle_{g}\overline{h}-p\nabla\overline{h}\nabla{u}-\RR_{N}+2p\triangle u+p(p+1)|\nabla u|^2\\
&-p\triangle u-p^2|\nabla{u}|^2\\
&=-\triangle h-\RR_{N}+p|\nabla u|^2+\nabla (h+pu)\nabla h.
\end{align*}
{\bf b.} This follows from a similar computation. 
\end{proof}

\begin{lemma}
\label{monoformwarp}
Adapted to the Ricci flow on warped product metric given in (\ref{warp}) , the monotonicity formulas are given by\\
{\bf a.} Energy: $ \mathcal{F}(g,u,h)=\int_{N}(S+|\nabla h|^2)e^{-h}d\mu_{N}$ restricted to $\int_{N}e^{-h}d\mu_{N}=\frac{1}{V(F)}$.\\
Furthermore if  $h_{t}=-S-\triangle h+\nabla h(\nabla h+p\nabla u)$ then
\begin{align*}
\frac{d}{dt}\mathcal{F}&=2\int_{N}\Big(|\mathcal{S}+\text{Hess}(h)|^2+p|\triangle u-\nabla{u}\nabla{h}|^2\Big)e^{-h}d\mu_{N}.
\end{align*}
{\bf a'.} W.r.t system (\ref{warpedLie}), $h_{t}=-S-\triangle h+|\nabla h|^2$.\\
{\bf b.} Entropy:  Restricted to $\int_{N}H d\mu_{N}=\int_{N}(4\pi\tau)^{-n/2}e^{-h}d\mu_{N}=\frac{1}{V(F)}$,\\
\[
\Psi(g,u,\tau,h)=\int_{N}\Big[\tau(|\nabla h|^2+S)+(h+pu-n-p)-\frac{p}{2}\ln(4\pi\tau)\Big]Hd\mu_{N}.
\]
And if $h_{t}=-S-\triangle h+\nabla h(\nabla h+p\nabla u)+\frac{n}{2\tau}$  then
\begin{align*}
\frac{d}{dt}\Psi&=2\tau\int_{N}(|\mathcal{S}+\text{Hess}{h}-\frac{g}{2\tau}|^2+p|\triangle u-\nabla{u}\nabla{h}+\frac{1}{2\tau}|^2)(4\pi\tau)^{-n/2}e^{-h}d\mu_{N}.
\end{align*}
{\bf b'.} W.r.t system (\ref{warpedLie}), $h_{t}=-S-\triangle h+|\nabla h|^2+\frac{n}{2\tau}.$
\end{lemma}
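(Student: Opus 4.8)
The plan is to derive both monotonicity formulas in Lemma \ref{monoformwarp} by transporting Perelman's formulas on $(M^{n+p},g_M)$ down to $N$, using the fiber $(F,g_F)$ being Ricci flat so that all integrals over $M$ factor as $V(F)$ times an integral over $N$ (after absorbing the warping factor $f^p=e^{pu}$). The bridge between the two settings is Lemma \ref{baseandwhole}: the substitution $\overline h = h + pu$ (energy case) or $\overline h = h + pu - \tfrac p2\ln(4\pi\tau)$ (entropy case) converts $\overline H = e^{-\overline h}$ into $H e^{-pu}$, and then $e^{-\overline h}\,d\mu_M = e^{-h}e^{pu}\cdot e^{pu}\,d\mu_N\,d\mu_F$... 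I must be careful: $d\mu_M = e^{pu}d\mu_N\,d\mu_F$ and $e^{-\overline h}=e^{-h}e^{-pu}$, so $e^{-\overline h}d\mu_M = e^{-h}\,d\mu_N\,d\mu_F$, which integrates over $F$ to give $V(F)\int_N e^{-h}d\mu_N$; this explains both the normalization $\int_N e^{-h}d\mu_N = 1/V(F)$ and why the $\RR_M$, $|\nabla\overline h|^2_M$ integrands collapse to $S$, $|\nabla h|^2$ on $N$ after using the warped-product formulas \eqref{Rconwarp1} and the Laplacian identity at the top of Section 2.

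First I would establish part (a). Starting from $\mathcal F(g_M,\overline h)=\int_M(\RR_M+|\nabla\overline h|^2_M)e^{-\overline h}d\mu_M$, substitute $\overline h = h+pu$, use $|\nabla\overline h|^2_M = |\nabla h + p\nabla u|^2 = |\nabla h|^2 + 2p\langle\nabla h,\nabla u\rangle + p^2|\nabla u|^2$ and $\RR_M = \RR_N - 2p\triangle u - p(p+1)|\nabla u|^2$ (rewriting the last formula of the warped-product list with $f=e^u$), then integrate by parts over $N$ against the weight $e^{-h}d\mu_N$ to combine the cross terms; the point is that $\int_N(-2p\triangle u)e^{-h}d\mu_N = \int_N(-2p\langle\nabla u,\nabla h\rangle)e^{-h}d\mu_N$, so everything reorganizes into $S = \RR_N - p|\nabla u|^2$ plus $|\nabla h|^2$, giving the claimed restricted functional. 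For the evolution, I would \emph{not} recompute from scratch but instead invoke Perelman's identity $\tfrac{d}{dt}\mathcal F = 2\int_M|\Rc_M+\mathrm{Hess}_M\overline h|^2 e^{-\overline h}d\mu_M$ (valid because, by Lemma \ref{baseandwhole}a, $h_t = -S-\triangle h + \nabla h(\nabla h+p\nabla u)$ is exactly equivalent to $\overline h$ satisfying the conjugate heat equation on $M$), and then decompose the symmetric $2$-tensor $\Rc_M + \mathrm{Hess}_M(\overline h)$ into its $N$-block and $F$-block. Using \eqref{Rconwarp1} for the $\Rc_M$ blocks, the standard warped-product Hessian formulas for $\mathrm{Hess}_M(\overline h)$ (with $\overline h$ constant on fibers, $\mathrm{Hess}_M\overline h$ has $N$-block $\mathrm{Hess}_N\overline h$ and $F$-block $f\langle\nabla f,\nabla\overline h\rangle g_F = e^{2u}\langle\nabla u,\nabla\overline h\rangle g_F$), and substituting $\overline h = h+pu$, the $N$-block becomes $\mathcal S + \mathrm{Hess}(h)$ and the $F$-block becomes a scalar multiple of $g_F$ whose squared norm contributes the $p|\triangle u - \nabla u\nabla h|^2$ term (the factor $p = \dim F$ coming from $|g_F|^2_{g_F}=p$). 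Part (a') is then immediate: modding out by the diffeomorphism generated by $-p\nabla u$ as in Section 2.2 turns $\nabla h(\nabla h + p\nabla u)$ into $|\nabla h|^2$.

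Then I would repeat the same scheme for the entropy in part (b): substitute $\overline h = h + pu - \tfrac p2\ln(4\pi\tau)$ into $\Psi(g_M,\tau,\overline h)$, check that $(4\pi\tau)^{-(n+p)/2}e^{-\overline h}d\mu_M = (4\pi\tau)^{-n/2}e^{-h}d\mu_N\,d\mu_F$ so the constraint and the $\RR_M+|\nabla\overline h|^2$ part reduce exactly as before, and track the extra terms $\overline h - (n+p)$: these become $(h + pu - \tfrac p2\ln(4\pi\tau)) - (n+p)$, yielding the stated integrand $\tau(|\nabla h|^2+S) + (h+pu-n-p) - \tfrac p2\ln(4\pi\tau)$. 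For the evolution, invoke Perelman's $\tfrac{d}{dt}\Psi = 2\tau\int_M|\Rc_M+\mathrm{Hess}_M\overline h - \tfrac{g_M}{2\tau}|^2(4\pi\tau)^{-(n+p)/2}e^{-\overline h}d\mu_M$ (again licensed by Lemma \ref{baseandwhole}b identifying the two heat equations), and decompose the tensor $T := \Rc_M + \mathrm{Hess}_M\overline h - \tfrac{g_M}{2\tau}$ into $N$- and $F$-blocks. The $N$-block is $\mathcal S + \mathrm{Hess}(h) - \tfrac{g_N}{2\tau}$; the $F$-block is a multiple of $g_F$, and since $g_M$ restricted to $F$ is $e^{2u}g_F = f^2 g_F$, the $-\tfrac{g_M}{2\tau}$ contributes $-\tfrac{1}{2\tau}$ to the $F$-block coefficient relative to the $e^{2u}g_F$ normalization, producing $p|\triangle u - \nabla u\nabla h + \tfrac{1}{2\tau}|^2$; then $|T|^2_{g_M}$ splits as the sum of the two squared norms because the $g_M$-orthogonal decomposition $T_N\oplus T_F$ has $|T|^2 = |T_N|^2_{g_N} + |T_F|^2_{g_M}$ and $|g_F|^2$ in the $f^2g_F$ metric is still $p$. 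Part (b') follows as in (a').

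The main obstacle will be the bookkeeping in the tensor decomposition: getting the warped-product formulas for $\Rc_M$ and $\mathrm{Hess}_M$ restricted to the fiber to land precisely on the combination $\triangle u - \nabla u\nabla h\,(+\tfrac{1}{2\tau})$ with the correct multiplicity $p$, and confirming that the $g_M$-norm of a tensor block living on $F$ with coefficient $c$ relative to $f^2g_F$ equals $c^2\cdot p$ independently of $f$ — this is where the Ricci-flatness of $F$ is essential, since it kills the intrinsic $\Rc_F$ term that would otherwise appear. A secondary technical point is justifying the integrations by parts on the closed manifold $N$ (no boundary terms) and checking that substituting $\overline h=h+pu$ into Perelman's divergence-form computation is consistent with having derived the same $h$-evolution directly — but both of these are routine once the algebraic identities of Lemma \ref{baseandwhole} and the warped-product curvature formulas are in hand.
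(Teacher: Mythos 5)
Your proposal is correct and follows essentially the same route as the paper: substitute $\overline h = h+pu$ (resp.\ with the $\tfrac p2\ln(4\pi\tau)$ shift), factor the $M$-integral over the Ricci-flat fiber to pull out $V(F)$, simplify via integration by parts on $N$, and then invoke Perelman's monotonicity formula on $M$ while block-decomposing $\Rc_M+\mathrm{Hess}_M\overline h\,(-\tfrac{g_M}{2\tau})$ into its $N$-part $\mathcal S+\mathrm{Hess}(h)\,(-\tfrac{g}{2\tau})$ and its fiber part, which is a multiple of $f^2g_F$ contributing the $p|\triangle u-\nabla u\nabla h\,(+\tfrac1{2\tau})|^2$ term. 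The paper's proof of part (a) does exactly this tensor decomposition (and declares (b), (b') "similar"), so your write-up just fills in the same computation with slightly more detail on the warped-product Hessian blocks.
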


\begin{proof}
{\bf a.} The entropy is given by
\begin{align*}
\mathcal{F}(\overline{g},\overline{h})&=\int_{M}(\RR_{M}+|\nabla\overline{h}|^2)e^{-\overline{h}}d\mu_{M}\\
&=\int_{N}\int_{F}(\RR_{N}-2p\triangle u-p(p+1)|\nabla u|^2+|\nabla\overline{h}|^2)e^{-\overline{h}}e^{pu}d\mu_{N}d\mu_{F}\\
&=V(F)\int_{N}(\RR-p|\nabla u|^2+|\nabla h|^2)e^{-h}d\mu_{N},
\end{align*}
where we use integration by parts (IBP) to simplify
\[
\int_{N}2p\Delta{u}e^{-h}d\mu_{N}=\int_{N}2p\nabla{h}\nabla{u}e^{-h}d\mu_{N}.
\]
Furthermore, if 
\begin{align*}
\overline{h}_{t}&=|\nabla\overline{h}|^2-\triangle_{\overline{g}}\overline{h}-\RR_{M}, \text{ then}\\
\frac{d}{dt}\mathcal{F}&=2\int_{M}(|\text{Rc}_{M}+\text{Hess}_{g_{M}}\overline{h}|^2d\mu_{M}\\
&=2V(F)\int_{N}\Big(\Big|\text{Rc}-p du\otimes du-p \text{Hess}(u)+\text{Hess}(h+pu)\Big|^2\\
&+p\Big|-\triangle{u}-p|\nabla{u}|^2+\nabla{u}\nabla(h+pu)\Big|^2\Big)d\mu_{N}\\
&=2V(F)\int_{N}(|\text{Rc}_{g}-pdu\otimes du+\text{Hess}(h)|^2+p|\triangle u-\nabla{u}\nabla{h}|^2)e^{-h}d\mu_{N}.
\end{align*}
The result then follows from lemma \ref{baseandwhole}.\\
{\bf a'.} It follows from $L_{-p\nabla u}h=-p\nabla u\nabla h$.\\
{\bf b.} and {\bf b'.} are similar using part b) of lemma \ref{baseandwhole}. 
\end{proof}

\begin{corollary}\label{entropyadaptedwarp}
If $ \Psi_{w}(g,u,\tau,h)=\int_{N}\Big(\tau(|\nabla h|^2+S)+(h-n)\Big)(4\pi\tau)^{-n/2}e^{-h}d\mu_{N}$ and $h_{t} =-S-\triangle h+\nabla h(\nabla h+p\nabla u)+\frac{n}{2\tau}$, then
\begin{equation*}
\ddt \Psi_{w}=2\tau\int_{N}(|\mathcal{S}+\text{Hess}h-\frac{g}{2\tau}|^2+p|\triangle u-\nabla{u}\nabla{h}|^2)(4\pi\tau)^{-n/2}e^{-h}d\mu_{N}.
\end{equation*}
\end{corollary}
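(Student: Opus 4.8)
The plan is to deduce this from Lemma~\ref{monoformwarp}, part (b), by isolating the discrepancy between the two entropy integrands. For the same $h$, the coefficient of $H$ in $\Psi_{w}$ differs from the coefficient of $H$ in the general adapted entropy $\Psi$ of Lemma~\ref{monoformwarp}(b) precisely by
\[
\phi:=pu-p-\tfrac{p}{2}\ln(4\pi\tau),
\]
so that $\Psi_{w}=\Psi-\int_{N}\phi\,H\,d\mu_{N}$. Hence it is enough to compute $\tfrac{d}{dt}\int_{N}\phi\,H\,d\mu_{N}$ and then to absorb the term $p|\triangle u-\nabla u\nabla h+\tfrac{1}{2\tau}|^{2}$ appearing in $\tfrac{d}{dt}\Psi$ into the target term $p|\triangle u-\nabla u\nabla h|^{2}$.

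The tool for the first computation is the adapted conjugate heat test identity: for any time-dependent function $\psi$ on $N$,
\[
\frac{d}{dt}\int_{N}\psi\,H\,d\mu_{N}=\int_{N}\bigl(\partial_{t}\psi-\triangle\psi-p\nabla u\nabla\psi\bigr)H\,d\mu_{N}.
\]
To justify it, I would first note that the hypothesis $h_{t}=-S-\triangle h+\nabla h(\nabla h+p\nabla u)+\tfrac{n}{2\tau}$, together with $\ln H=-\tfrac{n}{2}\ln(4\pi\tau)-h$ and $\tau_{t}=-1$, is exactly the adapted conjugate heat equation $\partial_{t}H=-\triangle H+SH+p\nabla u\nabla H$, i.e.\ $\Box_{w}^{\ast}H+p\nabla u\nabla H=0$. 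Combining this with $\partial_{t}(d\mu_{N})=(-S+p\triangle u)\,d\mu_{N}$, which comes from $\partial_{t}(g_{N})_{ij}=-2(\text{Rc}_{N})_{ij}+2p\,f^{-1}\text{Hess}(f)_{ij}$ and $S=\RR_{N}-p|\nabla u|^{2}$, yields $\partial_{t}(H\,d\mu_{N})=(-\triangle H+p\,\text{div}(H\nabla u))\,d\mu_{N}$; multiplying by $\psi$, integrating by parts on the closed manifold $N$, and adding $\int_{N}(\partial_{t}\psi)H\,d\mu_{N}$ produces the identity.

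Applying the identity with $\psi=\phi$ and using $u_{t}=\triangle u+p|\nabla u|^{2}$ and $\tau_{t}=-1$, one gets $\partial_{t}\phi=p\triangle u+p^{2}|\nabla u|^{2}+\tfrac{p}{2\tau}$, whereas $\triangle\phi=p\triangle u$ and $p\nabla u\nabla\phi=p^{2}|\nabla u|^{2}$; therefore $\partial_{t}\phi-\triangle\phi-p\nabla u\nabla\phi=\tfrac{p}{2\tau}$ and
\[
\frac{d}{dt}\int_{N}\phi\,H\,d\mu_{N}=\frac{p}{2\tau}\int_{N}H\,d\mu_{N}.
\]
On the other hand, expanding the square and using $\int_{N}(\triangle u-\nabla u\nabla h)H\,d\mu_{N}=\int_{N}\text{div}(H\nabla u)\,d\mu_{N}=0$ gives
\[
2\tau\!\int_{N}\!p\bigl|\triangle u-\nabla u\nabla h+\tfrac{1}{2\tau}\bigr|^{2}H\,d\mu_{N}=2\tau\!\int_{N}\!p|\triangle u-\nabla u\nabla h|^{2}H\,d\mu_{N}+\frac{p}{2\tau}\int_{N}H\,d\mu_{N}.
\]
Subtracting $\tfrac{d}{dt}\int_{N}\phi\,H\,d\mu_{N}$ from the formula for $\tfrac{d}{dt}\Psi$ in Lemma~\ref{monoformwarp}(b), the two copies of $\tfrac{p}{2\tau}\int_{N}H\,d\mu_{N}$ cancel, leaving exactly the claimed expression for $\tfrac{d}{dt}\Psi_{w}$.

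I expect no genuine obstacle; the only care needed is the $\tau$-bookkeeping (the $\tfrac{p}{2}\ln(4\pi\tau)$ summand and $\tau_{t}=-1$) and the verification that the assumed $h_{t}$ equation is precisely the adapted conjugate heat equation for $H$, since that is what legitimizes both the evolution of $d\mu_{N}$ and the vanishing of the divergence integral used above.
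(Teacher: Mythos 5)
Your proposal is correct and follows essentially the same route as the paper: the same decomposition $\Psi_{w}=\Psi-\int_{N}\bigl(pu-p-\tfrac{p}{2}\ln(4\pi\tau)\bigr)H\,d\mu_{N}$, the same use of the duality between $u$ (a heat solution) and $H$ (an adapted conjugate heat solution) to reduce the derivative of the correction term to $\tfrac{p}{2\tau}\int_{N}H\,d\mu_{N}$, and the same cancellation of that term against the cross term obtained by expanding $|\triangle u-\nabla u\nabla h+\tfrac{1}{2\tau}|^{2}$ and integrating by parts. The only cosmetic difference is that you spell out the general conjugate-heat test-function identity, where the paper simply invokes $\tfrac{d}{dt}\int u\overline{H}\,d\mu=0$.
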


\begin{proof}
We have 
\begin{equation*}
\Psi_{w}=\Psi+\int_{N}(p-u+\frac{p}{2}\ln(4\pi\tau))\overline{H}e^{pu} d\mu_{N}.
\end{equation*}
Since u satisfies the heat equation on M and $\overline{H}$ the conjugate, $\frac{d}{dt}\int_{N}u\overline{H}d\mu_{M}=0$.  Thus, 
\begin{align*}
\frac{d}{dt} \Psi_{w}&=\frac{d}{dt}\Psi+\frac{p}{2}\Big(\frac{d}{dt}\ln(4\pi\tau)\Big)\int_{N}\overline{H}e^{pu} d\mu_{N}\\
&=\frac{d}{dt}\Psi-\frac{p}{2\tau}\int_{N}\overline{H}e^{pu} d\mu_{N}.
\end{align*}
On the other hand,
\begin{align*}
|\triangle u-\nabla{u}\nabla{h}+\frac{1}{2\tau}|^2&=|\triangle u-\nabla{u}\nabla{h}|^2+\frac{1}{4\tau^2}+\frac{1}{\tau}(\triangle u-\nabla{u}\nabla{h}),\\
\int_{N}\triangle u e^{-h}d\mu_{N}&=\int_{N}\nabla{u}\nabla{h}e^{-h}d\mu_{N} \text{ by Stoke's theorem.}
\end{align*}
The result follows. 
\end{proof}
An immediate application from the above calculation is the following result.
\begin{proposition}
Let $(M,g_{M})$ be a warped product given as in (\ref{warp}) manifold N. If M is a gradient soliton and the soliton function is constant on each fiber then $(N,g_{N})$ is Ricci flat and f is a constant function.
\end{proposition}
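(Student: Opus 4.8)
The plan is to exploit the monotonicity formula for the adapted energy functional $\mathcal{F}$ from Lemma \ref{monoformwarp}(a), combined with the soliton hypothesis. First I would set up the gradient soliton equation on $M$: there is a function $\phi$ on $M$ with $\text{Rc}_M + \text{Hess}_M(\phi) = \lambda g_M$ for some constant $\lambda$, and by assumption $\phi$ is constant on each fiber, so $\phi$ descends to a function on $N$. Splitting this equation into its $N$-part, its $F$-part, and the mixed part (using the warped-product formulas \eqref{Rconwarp1} for $\text{Rc}_M$ and the corresponding splitting of $\text{Hess}_M$), I expect the mixed components to vanish automatically and the $F$-component to give a scalar equation relating $f$, $\triangle f$, $|\nabla f|^2$, and $\lambda$, while the $N$-component gives a Bakry--\'Emery type soliton equation on $N$ with the weight coming from $u = \log f$.

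Next I would translate the soliton into the statement that, along the corresponding self-similar Ricci flow solution, the adapted entropy/energy functional is \emph{stationary}: $\frac{d}{dt}\mathcal{F} = 0$ (after reparametrizing so the test function $h$ is the natural one attached to the soliton potential). By Lemma \ref{monoformwarp}(a), the derivative $\frac{d}{dt}\mathcal{F}$ is a sum of two manifestly nonnegative integrals,
\[
2\int_{N}\Big(|\mathcal{S}+\text{Hess}(h)|^2+p|\triangle u-\nabla u\,\nabla h|^2\Big)e^{-h}\,d\mu_{N},
\]
so stationarity forces both integrands to vanish pointwise: $\mathcal{S} + \text{Hess}(h) = 0$ and $\triangle u = \nabla u\,\nabla h$ everywhere on $N$. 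The second of these is the key new ingredient: combined with the evolution/Bochner computation \eqref{evolgradu} for $|\nabla u|^2$, or by directly integrating $\triangle u = \nabla u\,\nabla h$ against $e^{-h}$ after multiplying by suitable test functions, I would argue that $\nabla u \equiv 0$, i.e. $u$ — hence $f$ — is constant. The cleanest route is probably: from $\triangle u = \langle \nabla u, \nabla h\rangle$ we get $\triangle u\, e^{-h} = -\,\text{div}(e^{-h})\cdot\text{(something)}$, leading after integration by parts on the closed manifold $N$ to $\int_N |\nabla u|^2 e^{-h} = 0$.

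Once $f$ is constant, the warped product is a Riemannian product $N \times F$ with $F$ Ricci flat, so $\text{Rc}_M = \text{Rc}_N$ (pulled back), and $S = \RR_N$. The remaining equation $\mathcal{S} + \text{Hess}(h) = 0$ becomes the ordinary gradient Ricci soliton equation $\text{Rc}_N + \text{Hess}(h) = 0$ on the closed manifold $N$ (the constant $\lambda$ is pinned down to $0$ because the product structure and Ricci-flatness of $F$ force the scalar balance); by the classification of compact steady gradient Ricci solitons (they are Ricci flat, Hamilton), $(N, g_N)$ is Ricci flat and $h$ is constant, which completes the proof. The main obstacle I anticipate is the bookkeeping in the first step — correctly splitting the soliton equation across the warped product and identifying which normalization constant $\lambda$ appears, so that the functional genuinely comes out stationary rather than merely monotone; handling the possibility $\lambda \neq 0$ (shrinking/expanding) may require separately ruling it out using compactness of $N$ and the fiber equation for $f$ before invoking the steady classification.
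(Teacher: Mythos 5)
Your overall strategy matches the paper's: decompose the soliton identity according to the warped-product splitting, extract from the fiber component an equation of the form $\triangle u-\langle\nabla u,\nabla h\rangle=\text{const}$, kill the warping by integration by parts against $e^{-h}$ on the closed base, and finish with the classification of compact steady gradient solitons. Your route to $\nabla u\equiv 0$ (multiply $\triangle u=\langle\nabla u,\nabla h\rangle$ by $u$ and integrate against $e^{-h}$) is correct and is a legitimate alternative to the paper's, which instead first invokes Ricci-flatness of the closed steady soliton $M$ and then integrates the resulting fiber equation $\Delta u+p|\nabla u|^2=0$.

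The one genuine gap is exactly the point you flag at the end: you deduce $\mathcal{S}+\mathrm{Hess}(h)=0$ and $\triangle u=\langle\nabla u,\nabla h\rangle$ from stationarity of $\mathcal{F}$, but $\frac{d}{dt}\mathcal{F}=0$ holds only for \emph{steady} solitons; for $\lambda\neq 0$ the premise is false and your argument does not start. The paper closes this without any case analysis by working with the pointwise norm decomposition from the proof of Lemma \ref{monoformwarp}:
\begin{equation*}
0=\int_{M}|\mathrm{Rc}_{M}+\mathrm{Hess}\,\overline{h}-\lambda g_{M}|^{2}e^{-\overline{h}}\,d\mu_{M}
=\int_{M}|\mathcal{S}+\mathrm{Hess}(h)-\lambda g|^{2}e^{-\overline{h}}\,d\mu_{M}
+V(F)\int_{N}p\,|\triangle u-\nabla u\nabla h+\lambda|^{2}e^{-h}\,d\mu_{N},
\end{equation*}
then expands $|\triangle u-\nabla u\nabla h+\lambda|^{2}=|\triangle u-\nabla u\nabla h|^{2}+\lambda^{2}+2\lambda(\triangle u-\nabla u\nabla h)$ and observes that the cross term integrates to zero against $e^{-h}$ by Stokes; since the remaining terms are nonnegative, $\lambda^{2}\int_{N}e^{-h}=0$ forces $\lambda=0$ outright. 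This is essentially the same integration by parts you already use for $\nabla u$, applied one step earlier, so your proposed fix (``compactness of $N$ and the fiber equation for $f$'') is the right one --- but it is a step you must actually carry out, since the statement does not assume the soliton is steady, and compact shrinkers certainly exist in general.
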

\begin{proof}
Suppose $(M,g)$ is a gradient soliton satisfying \[\Rc_{M}+\text{Hess}\overline{h}=\lambda g_{M},\]
with $\overline{h}$ constant on each fiber. Let $h=\overline{h}+pu$ and follow the calculation from previous lemmas, we obtain:
\begin{align*}
0=\int_{M}|Rc_{M}+\text{Hess} \overline{h}-\lambda {g}_{M}|^2 e^{-\overline{h}} d\mu_{M}&=\int_{M}|\mathcal{S}+\text{Hess}(h)-\lambda{g}|^{2} e^{-\overline{h}}d\mu_{M}\\
&+V(F)\int_{N}p|\Delta u-\nabla{u}\nabla{h}+\lambda|^2 d\mu_{N}.
\end{align*}
On the other hand,
\begin{align*}
|\triangle u-\nabla{u}\nabla{h}+\lambda|^2&=|\triangle u-\nabla{u}\nabla{h}|^2+\lambda^2+2\lambda(\triangle u-\nabla{u}\nabla{h}),\\
\int_{N}\triangle u e^{-h}d\mu_{N}&=\int_{N}\nabla{u}\nabla{h}e^{-h}d\mu_{N} \text{ by Stoke's theorem.}
\end{align*}

Thus $\lambda=0$ and $(N\times F,g_{N}+f^2 g_{F})$ is a gradient steady soliton. As $N\times F$ is closed, by either theorem 2.4 of \cite{perelman1} or 20.1 of \cite{Hsurvey}, the manifold is Ricci flat. That is 
\begin{align*}
0&=f\triangle {f}+(p-1)|\nabla{f}|^2=\Delta{u}+p|\nabla{u}|^2,\\
0&=\text{Rc}(g)-p\frac{\text{Hess}_{g}(f)}{f}.
\end{align*}
However, as $\int_{N}\Delta{u}d\mu_{N}=0$, the first equality implies that $\nabla{u}=0$ and so f must be constant. Plugging into the 2nd equality yields the result.
\end{proof}
\begin{remark} Also computation above shows that monotone functionals in \cite{List08} are just suitable modification of ones developed by Perelman for warped products. For completeness, we'll repeat the definition here.
\end{remark}
\begin{definition}
Along the flow given by (\ref{warpedbeforelie}) or (\ref{warpedLie}), restricted to $\int_{N}e^{-h}d\mu_{N}=1$,  
\begin{equation} 
\mathcal{F}_{w}(g,u,h)=\int_{N}(S+|\nabla h|^2)e^{-h}d\mu_{N}.
\end{equation}
Restricted to $\int_{N}(4\pi\tau)^{-n/2}e^{-h}d\mu_{N}=1$,
\begin{equation}
\Psi_{w}(g,u,\tau,h)=\int_{N}\Big(\tau(|\nabla h|^2+S)+(h-n)\Big)(4\pi\tau)^{-n/2}e^{-h}d\mu_{N}.
\end{equation}
Furthermore, associated functionals can be defined similarly as follows:
\begin{align}
\mu_{w}(g,u,\tau)&=\inf_{h}{\Psi_{w}(g,u,h,\tau)},\\
\upsilon_{w}(g,u)&=\inf_{\tau>0}{\mu_{w}(g,u,\tau)},\\
\lambda_{w}(g,u)&=\inf_{h}{\mathcal{F}_{w}(g,u,h)}\geq \lambda(g_{M}) .
\end{align}
\end{definition}
\begin{remark} These functionals satisfy diffeomorphism and scaling invariance:
\begin{align*}
\Psi_{w}(g,u,\tau,h)&= \Psi_{w}(cg,u,c\tau,h),\\
\mu_{w}(g,u,\tau)&=\mu_{w}(cg,u,c\tau),\\
\upsilon_{w}(g,u)&=\upsilon_{w}(cg,u).
\end{align*}
\end{remark}

\begin{remark} The functionals are defined w.r.t (\ref{warpedLie}), which is more general than the Ricci flow system. Nevertheless, the parabolic structure is rigorous enough that these new quantities behave similarly. 
%\end{remark}
First, we collect some lemmas whose proof is identical to the counterpart for the Ricci flow. Curious readers should consult appropriate sections in \cite{chowetc1} and \cite{chowetc3}. 
\end{remark}
\begin{lemma} 
\label{basicmu}
For any metric g, smooth function u on closed N anf $\tau>0$, \\
{\bf a.} $\mu_{w}(g,u,\tau)$ is finite.\\
{\bf b.} $\lim_{\tau\rightarrow 0^{+}}\mu_{w}(g,u,\tau)=0$.\\
{\bf c.} There exists a smooth minimizer $f_{\tau}$ for $\Psi_{w}(g,u,.,\tau)$ which satisfies
\[
\tau(2\triangle{f_{\tau}}-|\nabla{f_{\tau}}|^2+\SS)+f_{\tau}-n=\mu_{w}(g,u,\tau).
\]
{\bf d.} Along the flow, $0\leq t_{1}\leq t_{2}\leq T$ and $\tau(t)>0$, $\frac{d}{dt}\tau=-1$ then
\begin{equation*} \mu_{w}(g(t_{2}),\tau(t_{2}))\geq \mu_{w}(g(t_{1}),\tau(t_{1})).
\end{equation*} 
\end{lemma}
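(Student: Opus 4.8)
The plan is to establish Lemma \ref{basicmu} by running the standard Perelman argument for the $\mu$-functional, making the necessary modifications dictated by the extra $pu$-dependence appearing in Corollary \ref{entropyadaptedwarp} and Lemma \ref{monoformwarp}. Because the functionals $\Psi_w$, $\mu_w$, $\upsilon_w$ are, by the Remark preceding the lemma, "suitable modifications" of Perelman's functionals with an essentially parabolic structure, each part should reduce to its classical counterpart after substituting the logarithmic Sobolev inequality on $(N,g)$ together with the boundedness of $|u(\cdot,t)|_{L^\infty}$ and $|\nabla u|$ (Lemma \ref{generalbound}).

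\medskip\noindent\textbf{Part a (finiteness).} First I would rewrite $\Psi_w(g,u,\tau,h)$ with $w=e^{-h/2}$, normalized by $\int_N (4\pi\tau)^{-n/2} w^2\, d\mu_N = 1$, so that, up to the constant additive term, $\Psi_w$ becomes $\int_N\big(\tau(4|\nabla w|^2 + S w^2) - w^2\log w^2\big)(4\pi\tau)^{-n/2}d\mu_N - \tfrac n2\log(4\pi\tau) - n$. Since $S = \RR_N - p|\nabla u|^2$ is bounded below on the closed manifold $N$ for each fixed $t$ (by Lemma \ref{generalbound} and compactness), this is exactly the functional controlled by the logarithmic Sobolev inequality, which holds on any closed Riemannian manifold. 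That inequality gives a lower bound $\Psi_w \geq -C(g,u,\tau) > -\infty$; the upper bound $\mu_w \leq 0$ (indeed $\le \Psi_w$ at $h = \tfrac p2\ln(4\pi\tau)+\text{const}$, or just by testing) is immediate, so $\mu_w(g,u,\tau)$ is finite.

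\medskip\noindent\textbf{Part b (small-time limit).} Here I would compare with the Euclidean Gaussian: choosing $h$ to be (a cutoff modification of) $\tfrac{|x-x_0|^2}{4\tau} + \tfrac n2\log(4\pi\tau)$ in normal coordinates near a point, a direct computation shows $\Psi_w \to 0$ as $\tau\to 0^+$, since the contributions of $S$ and of the $u$-dependent corrections carry positive powers of $\tau$ and vanish in the limit. Conversely, the logarithmic Sobolev bound with the correct constant (Euclidean constant plus $O(\tau)$ error, using that $(N,g)$ is locally almost Euclidean and $S$ bounded) forces $\liminf_{\tau\to 0^+}\mu_w \geq 0$. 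Combining, $\lim_{\tau\to 0^+}\mu_w(g,u,\tau)=0$.

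\medskip\noindent\textbf{Parts c and d.} For (c), the existence and smoothness of a minimizer follows from the direct method: the normalized functional is bounded below (part a), minimizing sequences are bounded in $W^{1,2}$ by the log-Sobolev inequality, and one extracts a weak limit; positivity and smoothness of the minimizer $w_\tau = e^{-f_\tau/2}$ come from elliptic regularity applied to the Euler--Lagrange equation, which upon expansion is precisely $\tau(2\triangle f_\tau - |\nabla f_\tau|^2 + S) + f_\tau - n = \mu_w(g,u,\tau)$. For (d), the monotonicity, I would take the minimizer $f_{\tau(t_2)}$ at time $t_2$ and solve the adapted conjugate heat equation $h_t = -S - \triangle h + \nabla h(\nabla h + p\nabla u) + \tfrac{n}{2\tau}$ backward in time to $t_1$ (this is a backward heat-type equation for $e^{-h}$, solvable since we prescribe data at the later time); then $\Psi_w(g(t_1),u(t_1),\tau(t_1),h(t_1)) \leq \mu_w(g(t_2),u(t_2),\tau(t_2))$ is false in sign unless I use that $\frac{d}{dt}\Psi_w \geq 0$ along this solution by Corollary \ref{entropyadaptedwarp}, whence $\mu_w(g(t_1),\tau(t_1)) \leq \Psi_w(g(t_1),u(t_1),\tau(t_1),h(t_1)) \leq \Psi_w(g(t_2),u(t_2),\tau(t_2),h(t_2)) = \mu_w(g(t_2),\tau(t_2))$, as claimed.

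\medskip The main obstacle I anticipate is part (d): one must verify that the backward adapted conjugate heat equation is well-posed on $[t_1,t_2]$ with terminal data a smooth positive function — this is where the $p\nabla u\cdot\nabla h$ drift term and the coupling to the evolving $u$ must be handled, though since $u$ and its derivatives are controlled (Lemma \ref{generalbound}) and the equation for $e^{-h}$ is uniformly parabolic, standard linear parabolic theory applies. A secondary technical point is tracking the $u$-dependent terms through the log-Sobolev estimate in parts (a)--(b); since those terms are lower-order and bounded, they only affect constants, not the qualitative conclusions.
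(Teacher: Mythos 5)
Your argument is correct and is exactly the route the paper itself intends: the paper offers no proof of Lemma \ref{basicmu}, only the preceding remark that the proofs are identical to the Ricci flow case in \cite{chowetc1} and \cite{chowetc3}, and your log-Sobolev/Gaussian-test/direct-method/conjugate-heat-flow scheme (with $\RR$ replaced by $S$ and Corollary \ref{entropyadaptedwarp} supplying the monotonicity in part (d)) is that standard argument. One small slip: the parenthetical claim $\mu_{w}\leq 0$ in part (a) is false for general $\tau$ (indeed Lemma \ref{behaviorforsmalltau} gives $\mu_{w}\to+\infty$ as $\tau\to\infty$ when $\lambda_{w}>0$), but your fallback ``just by testing'' with any admissible $h$ already yields the finite upper bound, so the conclusion stands.
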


\begin{lemma}
\label{behaviorforsmalltau} Assume as above then if $\lambda_{w}(g,u)>0$ then $\lim_{\tau\rightarrow \infty}\mu_{w}(g,u,\tau)=+\infty$.
\end{lemma}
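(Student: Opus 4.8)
Lemma~\ref{behaviorforsmalltau} asserts that if $\lambda_w(g,u)>0$ then $\mu_w(g,u,\tau)\to+\infty$ as $\tau\to\infty$. The plan is to imitate the classical argument for Perelman's $\mu$-functional (cf. the relevant sections of \cite{chowetc1}), adapted to the modified functional $\Psi_w$ and the scalar $S=\RR-|\nabla u|^2$. First I would recall the variational characterization: writing $e^{-h}=w^2$ with $\int_N w^2(4\pi\tau)^{-n/2}d\mu_N=1$, one has
\[
\Psi_w(g,u,\tau,h)=\int_N\Big[\tau(4|\nabla w|^2+Sw^2)-w^2\ln w^2\Big](4\pi\tau)^{-n/2}d\mu_N-\frac n2\ln(4\pi\tau)-n,
\]
so that $\mu_w(g,u,\tau)$ is, up to the explicit additive constant $-\frac n2\ln(4\pi\tau)-n$ and the normalization, the infimum of a log-Sobolev-type quadratic-plus-entropy functional with the potential $S$ playing the role of scalar curvature.

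The key steps, in order. (1) Split $\Psi_w$ into the "energy part" $\tau\int_N(4|\nabla w|^2+Sw^2)(4\pi\tau)^{-n/2}d\mu_N$ and the "entropy part" $-\int_N w^2\ln w^2(4\pi\tau)^{-n/2}d\mu_N$, and bound the entropy part from below. For this I would invoke Jensen's inequality together with the normalization constraint: for the probability measure $d\nu=w^2(4\pi\tau)^{-n/2}d\mu_N$ one controls $-\int \ln w^2\,d\nu$ in terms of $\int|\nabla w|^2$ by the Euclidean-type logarithmic Sobolev inequality available on the closed manifold $N$ (whose constants depend only on $g(t)$, which on a fixed time slice we may take fixed — but here $\tau\to\infty$ forces us to track $t$-dependence, see below). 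Concretely, a log-Sobolev inequality of the form $-\int w^2\ln w^2\,d\mu \le \varepsilon\int|\nabla w|^2\,d\mu + C(\varepsilon)$ (valid on any closed manifold for every $\varepsilon>0$) lets the gradient term absorb the entropy term. (2) Use $\lambda_w(g,u)>0$: by definition $\lambda_w$ is the bottom of the spectrum of the Schrödinger-type operator $-4\Delta+S$, so $\int_N(4|\nabla w|^2+Sw^2)d\mu_N\ge \lambda_w\int_N w^2 d\mu_N$ for all $w$. Hence the energy part is $\ge \tau\lambda_w\cdot(\text{const})$ after accounting for the $(4\pi\tau)^{-n/2}$ weight and normalization. (3) Combine: choosing $\varepsilon$ small so that a fixed fraction of the gradient term survives, one gets $\Psi_w\ge c\,\tau\lambda_w - C - \frac n2\ln(4\pi\tau)-n\to+\infty$, since the linear-in-$\tau$ term dominates the logarithm. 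Taking the infimum over admissible $h$ (equivalently $w$) preserves the lower bound, giving $\mu_w(g,u,\tau)\to+\infty$.

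The main obstacle is the $t$-dependence hidden in "along the flow": as $\tau=T-t\to\infty$ one is not on a fixed metric, so the log-Sobolev constant $C(\varepsilon)$ and the positivity of $\lambda_w$ must be handled uniformly. Here I would lean on the monotonicity already established — Lemma~\ref{basicmu}(d) gives $\mu_w(g(t_2),\tau(t_2))\ge \mu_w(g(t_1),\tau(t_1))$, and $\lambda_w$ is itself monotone under the flow (as $\lambda_w\ge\lambda(g_M)$ and Perelman's $\lambda$ is nondecreasing) — so it suffices to obtain the blow-up estimate on a sequence of times, or more simply to note that the statement is really about the static functional on a single metric with $\tau$ as a free parameter: reading $\mu_w(g,u,\tau)$ as a function of $\tau$ alone for \emph{fixed} $(g,u)$, the constants are genuinely fixed and the argument of steps (1)--(3) goes through cleanly. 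The remaining care is purely bookkeeping: making sure the $(4\pi\tau)^{-n/2}$ normalization is carried consistently through Jensen and through the spectral bound, and that the log-Sobolev inequality is applied in the scale-correct form so the $\tau\to\infty$ limit indeed produces a term growing linearly in $\tau$ rather than one that degenerates.
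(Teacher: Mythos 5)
Your proposal is essentially the paper's (intended) proof: the paper gives no argument for this lemma, deferring to the standard Ricci-flow counterpart in \cite{chowetc1}, \cite{chowetc3}, and your reconstruction — substitute $e^{-h}=w^{2}$, read $\lambda_{w}$ as the bottom eigenvalue of $-4\Delta+S$, bound the entropy term by a log-Sobolev inequality on the fixed closed manifold $(N,g)$, and let the linear-in-$\tau$ spectral term beat $-\tfrac{n}{2}\ln(4\pi\tau)$ — is exactly that argument transplanted to $S$ and $\Psi_{w}$. Two small repairs in the write-up: the log-Sobolev inequality you need is $\int w^{2}\ln w^{2}\,d\mu\leq\varepsilon\int|\nabla w|^{2}\,d\mu+C(\varepsilon)$ (you wrote it with the sign of the entropy reversed), and the absorption is cleanest via the split $\tau\int(4|\nabla w|^{2}+Sw^{2})\geq(\tau-1)\lambda_{w}+4\int|\nabla w|^{2}+\min S$ for $\tau\geq1$, rather than peeling off "a fixed fraction of the gradient term," since $\lambda_{w}$ is the ground state of $-4\Delta+S$ and not of $-(4-\varepsilon)\Delta+S$.
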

Combining the last two lemmas yields
\begin{corollary}
\label{finitev}
If $\lambda_{w}(g,u)>0$ then $\upsilon(g,u)$ is well-defined and finite.
\end{corollary}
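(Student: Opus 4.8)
The plan is to combine the monotonicity of $\mu_w$ in $\tau$ (Lemma \ref{basicmu}d) with its growth at infinity under the positivity assumption (Lemma \ref{behaviorforsmalltau}). Recall that $\upsilon_w(g,u)=\inf_{\tau>0}\mu_w(g,u,\tau)$, so the issue is whether this infimum is attained (or at least finite and bounded below). First I would observe that by Lemma \ref{basicmu}a each $\mu_w(g,u,\tau)$ is a finite real number, and by Lemma \ref{basicmu}b we have $\lim_{\tau\to 0^+}\mu_w(g,u,\tau)=0$; hence the infimum over $\tau>0$ is at most $0$, so $\upsilon_w(g,u)\le 0<\infty$. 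The remaining task is a lower bound, i.e. that $\upsilon_w(g,u)>-\infty$.

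For the lower bound I would argue as follows. Fix any $\tau_0>0$. By the monotonicity statement in Lemma \ref{basicmu}d (applied to the flow $\tau(t)=\tau_0 - t$, whose existence as a static-metric flow over a short time is all that is needed, or directly to the stated monotonicity of $\mu_w(g,\tau)$ in $\tau$ along the trivial variation), $\mu_w(g,u,\tau)$ is monotone nondecreasing in $\tau$; consequently $\inf_{\tau\in(0,\tau_0]}\mu_w(g,u,\tau)=\lim_{\tau\to 0^+}\mu_w(g,u,\tau)=0$. Therefore
\begin{equation*}
\upsilon_w(g,u)=\min\Big\{0,\ \inf_{\tau\ge\tau_0}\mu_w(g,u,\tau)\Big\}.
\end{equation*}
By Lemma \ref{behaviorforsmalltau}, since $\lambda_w(g,u)>0$ we have $\mu_w(g,u,\tau)\to+\infty$ as $\tau\to\infty$; in particular there is $\tau_1>\tau_0$ with $\mu_w(g,u,\tau)\ge 0$ for all $\tau\ge\tau_1$. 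On the compact interval $[\tau_0,\tau_1]$ the function $\tau\mapsto\mu_w(g,u,\tau)$ is continuous (this follows from the existence of smooth minimizers in Lemma \ref{basicmu}c together with standard continuity of the infimum in the parameter), hence bounded below, say by $-C$. Combining the three ranges $(0,\tau_0]$, $[\tau_0,\tau_1]$, $[\tau_1,\infty)$ gives $\mu_w(g,u,\tau)\ge -C$ for all $\tau>0$, so $\upsilon_w(g,u)\ge -C>-\infty$, and the infimum is therefore a well-defined finite number.

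The main obstacle I anticipate is the continuity (or at least lower semicontinuity) of $\tau\mapsto\mu_w(g,u,\tau)$ on a compact interval, which is what rules out the infimum escaping to $-\infty$ on a bounded range of $\tau$; in the classical Ricci flow setting this is a known but slightly delicate point (one uses the smooth minimizer $f_\tau$ from Lemma \ref{basicmu}c as a test function for nearby $\tau'$ to get one-sided bounds, and symmetrizes). Since the excerpt explicitly grants us Lemmas \ref{basicmu} and \ref{behaviorforsmalltau}, and the parabolic structure of (\ref{warpedLie}) is stated to behave exactly as in \cite{chowetc1,chowetc3}, I would simply cite the corresponding continuity/compactness argument there rather than reproduce it, and the corollary follows immediately by assembling the pieces above.
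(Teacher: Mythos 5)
Your proposal is essentially the argument the paper intends: the paper offers no written proof beyond ``combining the last two lemmas,'' and your assembly of Lemma \ref{basicmu} (finiteness, behavior as $\tau\to 0^{+}$) with Lemma \ref{behaviorforsmalltau} (divergence as $\tau\to\infty$), together with continuity of $\mu_{w}(g,u,\cdot)$ on a compact interval of $\tau$, is the standard way to fill it in. One step, however, is misapplied: Lemma \ref{basicmu}d asserts monotonicity of $\mu_{w}(g(t),\tau(t))$ \emph{along the flow}, where the metric evolves together with $\tau$; it does not say that $\tau\mapsto\mu_{w}(g,u,\tau)$ is nondecreasing for a \emph{fixed} metric $g$, and your ``trivial variation'' reading is not available since the static metric is generally not a solution of (\ref{warpedLie}). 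So the claim that $\inf_{\tau\in(0,\tau_{0}]}\mu_{w}(g,u,\tau)=0$ is not justified as stated. This is harmless for the corollary: Lemma \ref{basicmu}b already gives a $\tau_{0}>0$ with $\mu_{w}(g,u,\tau)\geq -1$ on $(0,\tau_{0}]$, which is all the small-$\tau$ control you need, and the rest of your argument (divergence for large $\tau$, continuity on $[\tau_{0},\tau_{1}]$ via the minimizers of Lemma \ref{basicmu}c and the references to \cite{chowetc1,chowetc3}) goes through unchanged.
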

%\begin{remark} Note that one reason that the proofs of the above lemmas just carry over is because $|\nabla u|^2$ is rescaled exactly like scalar curvature.  
%\end{remark}

%As the above discussion suggests, the warped structure would have little influence along the Ricci flow and general results usually have analogues for the base manifolds. 

An immediate application of the monotone framework (particularly Corollary \ref{finitev}) is the theorem below which resembles a result of P. Topping in \cite{Topping05} using scalar curvature to control diameter for a compact manifold along the Ricci flow. The proof is omitted as it is identical without notable modification once the setting is up. 

\begin{theorem}
\label{simtop}
Let $n\geq 3$ and $(N^{n},g(t),u(.,t))$ be a solution to (\ref{warpedbeforelie}) with $\upsilon_{w}(g,u)\geq -\infty$ then there exists a C depending on n, $\upsilon_{w}(g,u)$ such that
$$\text{diam}(N,g)\leq C\int_{N}S_{+}^{(n-1)/2}d\mu_{N}=C\int_{N}(\RR_{N}-p|\nabla{u}|^2)_{+}^{(n-1)/2}d\mu_{N}.$$
\end{theorem}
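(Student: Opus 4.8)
The plan is to transplant P. Topping's diameter estimate from \cite{Topping05} to the warped setting via the dictionary: the \emph{effective scalar curvature} $S=\RR_{N}-p|\nabla u|^{2}$ replaces the scalar curvature $\RR_{M}$, the invariant $\upsilon_{w}(g,u)$ replaces Perelman's $\nu$, and the adapted entropy $\Psi_{w}$ (equivalently $\mu_{w}$, $\upsilon_{w}$, as in Lemma \ref{monoformwarp}, Corollary \ref{entropyadaptedwarp}, Lemma \ref{basicmu}, Corollary \ref{finitev}) replaces the classical entropy. Fixing $t$ and writing $(g,u):=(g(t),u(t))$, the argument has two ingredients: a non-collapsing estimate expressed through $\upsilon_{w}$ and $S$, and a purely Riemannian covering argument along a minimizing geodesic of $(N,g)$. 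Since $S$ is a bounded-below continuous scalar that enters $\Psi_{w}$ exactly as $\RR$ enters Perelman's entropy, neither ingredient distinguishes $S$ from a genuine scalar curvature, and the argument goes through verbatim; in particular it is essentially static, the flow hypothesis being used only through the inequality $\mu_{w}(g,u,\tau)\ge\upsilon_{w}(g,u)$ for all $\tau>0$, which is the definition of $\upsilon_{w}$ together with the standing assumption $\upsilon_{w}>-\infty$.

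\emph{Step 1: $\upsilon_{w}$-noncollapsing.} First I would show that there is $\kappa=\kappa(n,\upsilon_{w})>0$ such that $\mathrm{Vol}_{N}(B_{N}(x,r))\ge\kappa\,r^{n}$ whenever $x\in N$, $r>0$, and $S\le r^{-2}$ on $B_{N}(x,r)$. This is Perelman's test-function computation: choose $h$ with $(4\pi r^{2})^{-n/2}e^{-h}$ a normalized logarithmic cutoff supported in $B_{N}(x,r)$ and constant on $B_{N}(x,r/2)$; then $S\le r^{-2}$ on the support makes the $\tau S$-term of $\Psi_{w}(g,u,r^{2},h)$ at most $1$, the Dirichlet energy of the cutoff is $O_{n}(1)$, and hence $\upsilon_{w}(g,u)\le\mu_{w}(g,u,r^{2})\le\Psi_{w}(g,u,r^{2},h)\le C(n)+\log\!\big(\mathrm{Vol}_{N}(B_{N}(x,r))/r^{n}\big)$, i.e.\ $\mathrm{Vol}_{N}(B_{N}(x,r))\ge e^{\upsilon_{w}-C(n)}\,r^{n}$. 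The only thing to check is that this test function's entropy obeys the same upper bound as in the unweighted case, which is immediate from the form of $\Psi_{w}$ in Lemma \ref{monoformwarp}.

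\emph{Step 2: covering along a geodesic.} Let $\gamma:[0,\ell]\to N$ be a unit-speed minimizing geodesic with $\ell=\mathrm{diam}(N,g)$. For $s\in[0,\ell]$ let $\rho(s)$ be the supremal $r$ for which $S\le r^{-2}$ on $B_{N}(\gamma(s),r)$ (capped at $\ell$). Step 1 gives $\mathrm{Vol}_{N}\!\big(B_{N}(\gamma(s),\rho(s))\big)\ge\kappa\,\rho(s)^{n}$, while maximality forces $\sup_{B_{N}(\gamma(s),\rho(s))}S=\rho(s)^{-2}$; Topping's continuity/iteration device then produces, near $\gamma(s)$, a still non-collapsed ball on a definite fraction of which $S\gtrsim\rho(s)^{-2}$, so $\int_{B_{N}(\gamma(s),\,C\rho(s))}S_{+}^{(n-1)/2}\,d\mu_{N}\gtrsim\kappa\,\rho(s)$. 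A Vitali-type selection along $\gamma$ then extracts a disjoint family $\{B_{N}(x_{k},\rho_{k})\}$ whose $C$-enlargements cover $\gamma$ with overlap bounded by $C(n)$, whence $\sum_{k}\rho_{k}\gtrsim\ell$ and, by disjointness, $\sum_{k}\int_{B_{N}(x_{k},C\rho_{k})}S_{+}^{(n-1)/2}\,d\mu_{N}\le C(n)\int_{N}S_{+}^{(n-1)/2}\,d\mu_{N}$. Chaining these, $\mathrm{diam}(N,g)=\ell\lesssim\sum_{k}\rho_{k}\lesssim\kappa^{-1}\int_{N}S_{+}^{(n-1)/2}\,d\mu_{N}$, which is the asserted bound with $C=C(n,\upsilon_{w})$; substituting $S=\RR_{N}-p|\nabla u|^{2}$ gives the stated form. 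The dimensional restriction $n\ge3$ enters here exactly as in \cite{Topping05}, in balancing the powers of $r$.

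\emph{Main obstacle.} Conceptually nothing new is needed; the work is the bookkeeping of Step 2. One must run the scale selection so that no constant depends on $\sup_{N}S$, on $\mathrm{Vol}(N)$, or on any other global feature of $N$ beyond $n$ and $\upsilon_{w}$ — which forces invoking the non-collapsing of Step 1 at \emph{every} scale $r\le\mathrm{diam}$, legitimate precisely because $\mu_{w}(g,u,\tau)\ge\upsilon_{w}$ for all $\tau>0$. One must also verify that Topping's passage from ``$\sup_{B}S=\rho^{-2}$'' to ``$S$ comparable to $\rho^{-2}$ on a definite portion of a nearby ball'' uses only continuity of $S$ and the non-collapsing, never a derivative bound on curvature — both being available here. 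Since $S$ enjoys every structural property of $\RR$ that these manipulations exploit, the proof is indeed identical to that of \cite{Topping05} without notable modification.
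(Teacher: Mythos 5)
Your proposal is exactly the argument the paper intends: the paper omits the proof entirely, remarking only that it is ``identical [to Topping's] without notable modification once the setting is up,'' and your two-step transplant (entropy non-collapsing via $\upsilon_{w}\le\mu_{w}\le\Psi_{w}$ on a cutoff test function, then Topping's covering along a minimizing geodesic with $S$ in place of $\RR$) is precisely that adaptation. No gap; you have simply written out what the paper delegates to \cite{Topping05}.
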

\begin{remark} The $+$ subscript denotes the positive part and $C=\max\{\frac{12}{\omega_{n}},6e^{3^{n}37-\upsilon_{w}(g,u)}\}.$
\end{remark}
\begin{corollary}
Let $n\geq 3$ and Let $(M,g_{M}(t))$, $0\leq t\leq T$, be a solution to the Ricci flow and $g_{M}(0)$ is a warped product metric as in (\ref{warp}). Furthermore assume that $\lambda_{w}(g(0))>0$ then there exists $C_{1},C_{2}$ depending on the initial conditions such that
$$\text{diam}(M,g)\leq C_{1}+C_{2}\int_{N}(\RR_{N}-p|\nabla{u}|^2)_{+}^{(n-1)/2}d\mu_{N}.$$
\end{corollary}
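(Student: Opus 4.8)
The strategy is to bound $\text{diam}(M,g_{M})$ by $\text{diam}(N,g_{N})$ together with a controlled fiber contribution, and then feed the base diameter into Theorem \ref{simtop}. The starting point is the elementary comparison, valid for any metric of warped product type (\ref{warp}),
\[
\text{diam}(M,g_{M}(t))\ \leq\ \text{diam}(N,g_{N}(t))\ +\ \Big(\max_{N}f(\cdot,t)\Big)\,\text{diam}(F,g_{F}).
\]
To see this, join $(x_{1},z_{1})$ to $(x_{2},z_{2})$ by first running a minimizing $g_{N}$-geodesic from $x_{1}$ to $x_{2}$ inside the slice $N\times\{z_{1}\}$ — on which $g_{M}$ restricts to $g_{N}$, so its $g_{M}$-length is $d_{g_{N}}(x_{1},x_{2})$ — and then a minimizing $g_{F}$-geodesic from $z_{1}$ to $z_{2}$ inside $\{x_{2}\}\times F$, whose $g_{M}$-length equals $f(x_{2})\cdot d_{g_{F}}(z_{1},z_{2})\leq(\max_{N}f)\,\text{diam}(F,g_{F})$; neither path need be $g_{M}$-geodesic since we only want an upper bound.

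For the fiber term I would use that $u=\ln f$ solves the heat equation $\partial_{t}u=\Delta_{g_{M}}u$ (the third equation of (\ref{warpedbeforelie})), so the maximum principle — already invoked just after that Lemma — gives $\max_{N}u(\cdot,t)\leq\max_{N}u(\cdot,0)$ for all $t$, hence $\max_{N}f(\cdot,t)\leq e^{\max_{N}u(\cdot,0)}$. This bounds the fiber contribution by $C_{1}:=e^{\max_{N}u(\cdot,0)}\,\text{diam}(F,g_{F})$, a quantity fixed by the initial data. For the base term, $(N,g(t),u(\cdot,t))$ is a solution of (\ref{warpedbeforelie}), so Theorem \ref{simtop} gives, for each $t$ at which $\upsilon_{w}(g(t),u(t))>-\infty$,
\[
\text{diam}(N,g(t))\ \leq\ C(t)\int_{N}(\RR_{N}-p|\nabla u|^2)_{+}^{(n-1)/2}\,d\mu_{N},\qquad C(t)=\max\Big\{\frac{12}{\omega_{n}},\ 6\,e^{3^{n}37-\upsilon_{w}(g(t),u(t))}\Big\}.
\]

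It remains to control the hypothesis and the constant uniformly in $t$, which is the only step that is not bookkeeping. From $\lambda_{w}(g(0))>0$ and Corollary \ref{finitev}, $\upsilon_{w}(g(0),u(0))$ is finite. For later times I would use the monotonicity of $\mu_{w}$ along the flow (Lemma \ref{basicmu}d): given $\tau_{2}>0$, run the flow on $[0,t]$ with $\tau$ decreasing at unit rate and $\tau(t)=\tau_{2}$, so $\tau(0)=\tau_{2}+t$ and $\mu_{w}(g(t),\tau_{2})\geq\mu_{w}(g(0),\tau_{2}+t)\geq\upsilon_{w}(g(0),u(0))$; taking the infimum over $\tau_{2}$ gives $\upsilon_{w}(g(t),u(t))\geq\upsilon_{w}(g(0),u(0))>-\infty$ for all $t\in[0,T]$. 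Hence Theorem \ref{simtop} applies at every time, and since $e^{-\upsilon_{w}(g(t),u(t))}\leq e^{-\upsilon_{w}(g(0),u(0))}$ the constant $C(t)$ is dominated by the time-independent $C_{2}:=\max\{12/\omega_{n},\ 6\,e^{3^{n}37-\upsilon_{w}(g(0),u(0))}\}$. Combining with the first display yields $\text{diam}(M,g)\leq C_{1}+C_{2}\int_{N}(\RR_{N}-p|\nabla u|^2)_{+}^{(n-1)/2}d\mu_{N}$. The points to be careful about are exactly this uniform lower bound on $\upsilon_{w}$ (so that the constant in Theorem \ref{simtop} does not deteriorate with $t$), the fact that the comparison uses $\max_{N}f$ rather than $\min_{N}f$, and the observation that $S_{+}=(\RR_{N}-p|\nabla u|^2)_{+}$ and $d\mu_{N}$ are diffeomorphism invariant, so it is harmless that Theorem \ref{simtop} is phrased for (\ref{warpedbeforelie}) while some of the preceding computations use the conjugated system (\ref{warpedLie}).
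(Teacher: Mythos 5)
Your proof is correct and follows essentially the same route as the paper, whose own proof is a one-line appeal to the preservation of the warped structure, the $L^{\infty}$ bound on $u$, triangle inequalities, and Theorem \ref{simtop}. Your write-up usefully fills in the one point the paper leaves implicit — propagating the finiteness of $\upsilon_{w}$ from $t=0$ to all later times via the monotonicity of $\mu_{w}$ — but this is an elaboration of the same argument, not a different one.
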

\begin{proof}
Since the flow preserves the warped product setting, $(F,g_{F})$ is closed, $|u(.,t)|_{L^{\infty}}\leq |u(.,0)|_{L^{\infty}}$, the result follows from triangle inequalities and theorem \ref{simtop}. 
\end{proof}
\begin{remark}
Applying Topping result directly yields the bound $C\int_{N}(\RR_{N}-2p\triangle{u}-p(p+1)|\nabla{u}|^2)^{(n+p-1)/2}e^{pu}d\mu_{N}$. Thus, the above corollary gives a better estimate. %Also we only need $\lambda_{w}(g,u)>0.$
\end{remark}

%%%%%%%%%%%%%%%%%%%%%%%%%%%%%%%%%%%%%%%%%%%%%%%%%%%%%%%%%%%%%%%%%%%%%%
\section{\textbf{Gradient Estimates and Harnack Inequality}}%%%%%%%%%%%%%%%%%%
%%%%%%%%%%%%%%%%%%%%%%%%%%%%%%%%%%%%%%%%%%%%%%%%%%%%%%%%%%%%%%%%%%%%%%
For this section, we restric ourselves to system (\ref{warpedLie}) and prove gradient estimates and a differential Harnack inequality for solutions to the conjugate heat equation. This section might be of independent interest  and some arguments here are similar to those in \cite{Ni06}. 

Recall $\Box_{w}^{\ast}=-\partial_{t}-\triangle+S$ and since $\frac{d}{dt}d\mu_{N}=S d\mu_{N}$ along the flow, $\Box_{w}^{\ast}$ is the adapted conjugate operator to the heat operator. Following standard theory on heat equations, for example  \cite[Chapter 23, 24]{chowetc3}, we denote $$H(x,t;y,T)=(4\pi(T-t))^{-n/2}e^{-h}=(4\pi\tau)^{-n/2}e^{-h},$$ for $\tau=T-t>0$, to be the heat kernel. That is, for fixed $(x,t)$, H is the fundamental solution of equation $\Box{H}=0$ based at $(x,t)$, and similarly for fixed $(y,T)$ and equation $\Box_{w}^{\ast}H=0$. The ultimate goal is to prove the following theorem.
\begin{theorem}
\label{eqstate}
Let $(N,u(.,t),g(t))$, $0\leq t\leq T$, be a solution to (\ref{warpedLie}). Fix $(y,T)$, let $H=(4\pi\tau)^{-n/2}e^{-h}$ be the  fundamental solution of $(-\partial_{t}-\triangle_{N}+S)H=0$,  and $$v=\Big((T-t)(2\triangle{h}-|\nabla{h}|^2+S)+h-n\Big)H,$$  then for all $t<T$, we have  $$v\leq 0. $$
\end{theorem}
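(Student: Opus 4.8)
\emph{Proof plan.} The plan is to follow Perelman's argument for $\overline{v}\le0$, adapting every step to system~(\ref{warpedLie}). Write $v=wH$ with $w=\tau(2\triangle h-|\nabla h|^{2}+S)+h-n$. Since $H$ is the fundamental solution of $\Box_{w}^{\ast}H=0$, equivalently $h_{t}=-S-\triangle h+|\nabla h|^{2}+\frac{n}{2\tau}$ (Lemma~\ref{monoformwarp}, part~b$'$), and $\nabla H=-H\nabla h$, a short manipulation yields the general identity
\[
\Box_{w}^{\ast}(wH)=-H\Big(w_{t}+\triangle w-2\langle\nabla w,\nabla h\rangle\Big),
\]
so everything reduces to computing $w_{t}+\triangle w-2\langle\nabla w,\nabla h\rangle$ and showing it is nonnegative. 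I would do this by differentiating in time the three ingredients $\triangle h$, $|\nabla h|^{2}$ and $S$, using Lemma~\ref{evolLap} for $\partial_{t}\triangle$, equation~(\ref{evolS}) for $\partial_{t}S$, the evolution of $h$ above, the Bochner formula, and the commutator identity relating $\triangle\nabla h$ to $\nabla\triangle h$, and then regrouping.

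The outcome should be the pointwise differential Harnack identity
\[
\Box_{w}^{\ast}v=-2\tau\left(\left|\mathcal{S}_{ij}+\nabla_{i}\nabla_{j}h-\frac{g_{ij}}{2\tau}\right|^{2}+\left|\triangle u-\langle\nabla u,\nabla h\rangle\right|^{2}\right)H\le0,
\]
which, after integrating over $N$ and using $\int_{N}\triangle u\,e^{-h}d\mu_{N}=\int_{N}\langle\nabla u,\nabla h\rangle e^{-h}d\mu_{N}$, matches the entropy monotonicity of Corollary~\ref{entropyadaptedwarp} in the normalization of system~(\ref{warpedLie}). The extra square $|\triangle u-\langle\nabla u,\nabla h\rangle|^{2}$ records the contribution of the $du\otimes du$ term in $\mathcal{S}$; getting all the lower-order and metric-variation terms to collapse exactly into these two squares — with the $\frac{1}{2\tau}$ shift appearing in the first one and not in the second — is the step I expect to be the main obstacle.

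Once the right-hand side is a manifest sum of squares, the rest follows Perelman. Let $\Phi>0$ solve $\partial_{t}\Phi=\triangle_{N}\Phi$ on $N\times[t_{0},T)$. Using $\frac{d}{dt}d\mu_{N}=-S\,d\mu_{N}$, the identity $\Box_{w}^{\ast}v=-\partial_{t}v-\triangle v+Sv$, and integration by parts to cancel the Laplacian terms,
\[
\frac{d}{dt}\int_{N}v\,\Phi\,d\mu_{N}=-\int_{N}(\Box_{w}^{\ast}v)\,\Phi\,d\mu_{N}\ge0,
\]
so $t\mapsto\int_{N}v\,\Phi\,d\mu_{N}$ is nondecreasing on $[t_{0},T)$. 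For the limit as $t\to T$, split along the three groups of terms in $w$: $\tau S\,H\Phi$ contributes $0$ (since $S$ is bounded, $\tau\to0$, and $\int_{N}H\Phi\,d\mu_{N}$ is the constant $\Phi(y,T)$); integration by parts gives $\int_{N}\tau(2\triangle h-|\nabla h|^{2})H\Phi\,d\mu_{N}=\tau\int_{N}|\nabla h|^{2}H\Phi\,d\mu_{N}-2\tau\int_{N}\langle\nabla h,\nabla\Phi\rangle H\,d\mu_{N}$, and the small-time asymptotics $h\sim d^{2}_{T}(y,\cdot)/4\tau$ together with concentration of $H$ at $y$ send the first term to $\frac{n}{2}\Phi(y,T)$ and the second to $0$; finally $\int_{N}(h-n)H\Phi\,d\mu_{N}\to\frac{n}{2}\Phi(y,T)-n\Phi(y,T)$. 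These are exactly the heat-kernel limits recorded in Theorem~\ref{conjandRD}, and summing them gives $\lim_{t\to T}\int_{N}v\,\Phi\,d\mu_{N}=0$.

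To pass to the pointwise statement, fix $t_{0}<T$ and $x_{0}\in N$ and take $\Phi$ to be the heat solution on $[t_{0},T)$ with $\Phi(\cdot,t_{0})$ an arbitrary nonnegative function (say an approximate identity at $x_{0}$); the maximum principle makes $\Phi>0$ for $t>t_{0}$, so the monotonicity and the limit give $\int_{N}v(\cdot,t_{0})\Phi(\cdot,t_{0})\,d\mu_{N}\le0$, whence $v(\cdot,t_{0})\le0$ because $\Phi(\cdot,t_{0})\ge0$ is arbitrary; since $t_{0}<T$ was arbitrary, $v\le0$ for all $t<T$. Besides the sum-of-squares computation, the only other delicate point is the rigorous justification of the $t\to T$ limits — uniform control of $H$ near the singular time — which is subsumed in Theorem~\ref{conjandRD}.
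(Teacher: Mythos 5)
Your overall strategy is the paper's: the pointwise identity $\Box_{w}^{\ast}v=-2\tau\big(|\mathcal{S}+\text{Hess}\,h-\frac{g}{2\tau}|^{2}+|\triangle u-\langle\nabla u,\nabla h\rangle|^{2}\big)H$ is exactly Proposition \ref{limit0}(a), and the monotonicity of $\rho_{\Phi}(t)=\int_{N}v\Phi\,d\mu_{N}$ together with the final ``$\Phi$ arbitrary'' step is identical to the paper's deduction of Theorem \ref{eqstate}. The genuine gap is in the step $\lim_{t\to T}\rho_{\Phi}(t)=0$. You assert that $\tau\int_{N}|\nabla h|^{2}H\Phi\,d\mu_{N}\to\frac{n}{2}\Phi(y,T)$ and $\int_{N}hH\Phi\,d\mu_{N}\to\frac{n}{2}\Phi(y,T)$ as ``heat-kernel limits recorded in Theorem \ref{conjandRD}.'' Neither is available at this point: Theorem \ref{conjandRD} says nothing about the gradient term, and its part (c) is itself deduced in the paper \emph{from} the proof of Proposition \ref{limit0}, so invoking it here is circular. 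The asymptotic expansion (Theorem \ref{asymptoticconj}) controls $H$, not $\nabla h$; the relation $h\sim d_{T}^{2}(y,\cdot)/4\tau$ cannot simply be differentiated to get the gradient-term limit.

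What the paper actually does is a two-sided squeeze built from one-sided ingredients. For the upper bound, $h\le\ell_{w}$ (Lemma \ref{compareRDandconjheat}, via the reduced distance and the maximum principle) gives $\limsup\int_{N}hH\Phi\,d\mu_{N}\le\frac{n}{2}\Phi(y,T)$ (Lemma \ref{integralboundabove}), and the gradient estimate of Lemma \ref{gradconjS} combined with the on-diagonal bound of Lemma \ref{conjestimateS} controls $\tau\int_{N}|\nabla h|^{2}H\Phi\,d\mu_{N}$ by $(2+C_{1}\tau)\int_{N}hH\Phi\,d\mu_{N}+O(1)$; hence $\limsup\rho_{\Phi}<\infty$, and with monotonicity the limit $\alpha$ exists. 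A mean-value/subsequence argument exploiting part (a) then identifies $\alpha=\lim\int_{N}(h-\frac{n}{2})H\Phi\,d\mu_{N}\le0$. For the lower bound, writing $\rho_{\Phi}=\Psi_{w}(g,u,\tau,\tilde{h})+o(1)$ and using $\lim_{\tau\to0^{+}}\mu_{w}(g,u,\tau)=0$ (Lemma \ref{basicmu}) rules out $\alpha<0$. You would need to supply this machinery, or give an independent proof of the gradient-term asymptotics; as written the limit step is unjustified. One minor point in your favor: your sign $\frac{d}{dt}d\mu_{N}=-S\,d\mu_{N}$ is the correct one for $\Box_{w}^{\ast}$ to be conjugate to $\Box$.
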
  
First let us recall the asymptotic behavior of the heat kernel as $t\rightarrow T$.
\begin{theorem}\cite[Theorem 24.21]{chowetc3} \label{asymptoticconj}
For $\tau=T-t$,
\begin{equation*} 
H(x,t;y,T) \sim \frac{e^{-\frac{d_{T}^{2}(x,y)}{4\tau}}}{(4\pi\tau)^{n/2}}\Sigma_{j=0}^{\infty}\tau^{j}u_{j}(x,y,\tau).
\end{equation*}
More precisely, there exist $t_{0}>0$ and a sequence $u_{j}\in C^{\infty}(M\times M\times [0,t_{0}])$ such that,
\begin{equation*}
H(x,t;y,T)-\frac{e^{-\frac{d_{T}^{2}(x,y)}{4\tau}}}{(4\pi\tau)^{n/2}}\Sigma_{j=0}^{k}\tau^{j}u_{j}(x,y,T-l)=w_{k}(x,y,\tau),
\end{equation*} 
with 
\begin{equation*}
u_{0}(x,x,0)=1,
\end{equation*}
and 
\begin{equation*}
w_{k}(x,y,\tau)=O(\tau^{k+1-\frac{n}{2}})
\end{equation*}
as $\tau\rightarrow 0$ uniformly for all $x,y\in M$.
\end{theorem}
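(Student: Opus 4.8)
The assertion is the Minakshisundaram--Pleijel parametrix expansion for the (conjugate) heat kernel, so the plan is the classical parametrix construction, adapted to the time-dependent metric $g(t)$ of (\ref{warpedLie}) and the potential $S$. Everything here is local and asymptotic as $\tau\to 0$, so I would fix $(y,T)$, write $\rho(x)=d_T(x,y)$ for the $g(T)$-distance, and pass to $g(T)$-normal coordinates centered at $y$. Since $\partial_t=-\partial_\tau$ with $\tau=T-t$, the equation $\Box_w^{\ast}H=0$ for $H(\cdot;y,T)$ becomes the forward problem $(\partial_\tau-\triangle_{g(T-\tau)}+S(\cdot,T-\tau))H=0$, $H(\cdot,0)=\delta_y$, where $g(T-\tau)=g(T)+2\tau\,\mathcal{S}(\cdot,T)+O(\tau^2)$ depends smoothly on $\tau$.

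First I would make the Gaussian ansatz
\[
P_k=(4\pi\tau)^{-n/2}e^{-\rho^2/4\tau}\sum_{j=0}^{k}\tau^{j}u_{j}(x,y,\tau),
\]
apply $\partial_\tau-\triangle_{g(T-\tau)}+S$, and collect powers of $\tau$. Using the eikonal identity $|\nabla\rho|_{g(T)}^2=1$ off the cut locus and the standard formula for $\triangle_{g(T)}\rho^2$ in terms of the volume density $\theta=\sqrt{\det g_{ij}}$, this yields a hierarchy of first-order linear transport ODEs along $g(T)$-geodesics from $y$; the discrepancy $\triangle_{g(T-\tau)}-\triangle_{g(T)}=O(\tau)$ and the $\tau$-derivative of the $g(T)$-weight are smooth and of one order higher, and are precisely what the $\tau$-dependence of the $u_j$ is allowed to absorb. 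Solving the leading equation forces $u_0=\theta^{-1/2}(1+O(\tau))$, hence $u_0(x,x,0)=1$; each subsequent $u_j$ is then obtained by integrating the previous datum against the lower-order remainder along the radial geodesic, and $u_j\in C^\infty(N\times N\times[0,t_0])$ follows from smooth dependence of geodesics and of $g(t)$ on parameters.

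With the $u_j$ chosen this way, a direct computation gives $(\partial_\tau-\triangle_{g(T-\tau)}+S)P_k=(4\pi\tau)^{-n/2}e^{-\rho^2/4\tau}\,\tau^{k}\,\triangle_{g(T)}u_k\,(1+O(\tau))=O(\tau^{k-n/2})$ uniformly on a fixed diagonal neighborhood, while off the diagonal the Gaussian makes every term $O(\tau^{\infty})$; multiplying $P_k$ by a fixed cutoff supported near the diagonal turns it into a globally defined smooth kernel on $N\times N$. Then I would set $H=P_k-(\Box_w^{\ast}P_k)\ast H$ and solve this Volterra equation by the Levi iteration: the $O(\tau^{k-n/2})$ error bound and the parametrix semigroup estimate make the Neumann series converge, and the correction is $w_k=O(\tau^{k+1-n/2})$ uniformly in $x,y$, as asserted. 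Finally, since $N$ is closed the fundamental solution of $\Box_w^{\ast}$ is unique, so this $H$ is the heat kernel. The hard part is the bookkeeping of the $t$-dependence of $g(t)$: one must verify that the $O(\tau)$ errors from replacing $\triangle_{g(T-\tau)}$ by $\triangle_{g(T)}$ and from $\partial_\tau$ hitting the $g(T)$-built weight are smooth and strictly lower order, i.e. that allowing $u_j=u_j(x,y,\tau)$ genuinely closes the recursion; the remaining ingredients (convergence of the Levi sum, the remainder estimate, uniqueness on a closed manifold) are standard and follow as in \cite{chowetc3}.
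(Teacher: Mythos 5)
Your outline is correct, but note that the paper does not prove this statement at all: it is quoted verbatim from the cited reference \cite[Theorem 24.21]{chowetc3}, whose proof is exactly the classical Minakshisundaram--Pleijel parametrix construction with Levi iteration for a time-dependent metric and potential that you sketch (Gaussian ansatz, transport equations along $g(T)$-geodesics, $u_0$ built from the volume density so that $u_0(x,x,0)=1$, then the Volterra/Neumann series to upgrade the parametrix error to the remainder bound $w_k=O(\tau^{k+1-n/2})$). So you are reproducing the standard argument of the cited source rather than offering a different route, and apart from the usual bookkeeping you flag yourself (taking the parametrix order large enough for the convolution series and then truncating), there is no gap to report.
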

 Next we derive a general estimate on the kernel. The proof is inspired by \cite{cz11}.
\begin{lemma}
\label{conjestimateS}
Let $B =-\inf_{0<\tau\leq T} \mu_{w}(g(0),\tau)$( B is well-defined as in lemma \ref{behaviorforsmalltau}) and $D =\min\{0,\inf_{N\times \{0\}}{S}\}$, then we have $$H(x,t,y,T) \leq e^{B-(T-t)D/3}(4\pi (T-t))^{-n/2}.$$
\end{lemma}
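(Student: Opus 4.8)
The plan is to combine the entropy-type lower bound for $\mu_w$ with the pointwise characterization of the heat kernel coming from a reverse-Poincaré/sup-bound argument, exactly in the spirit of Cao--Zhang \cite{cz11}. The starting point is Lemma \ref{basicmu}(d): along the flow, $\mu_w(g(t),\tau(t))$ is nondecreasing, so for any $t<T$ with $\tau=T-t$ we have $\mu_w(g(t),\tau)\geq \mu_w(g(0),\tau+t)\geq -B$, where $B=-\inf_{0<\tau\leq T}\mu_w(g(0),\tau)$ is finite by Lemma \ref{behaviorforsmalltau}. Feeding $H(\cdot,t;y,T)=(4\pi\tau)^{-n/2}e^{-h}$ as the test function in $\Psi_w$, and recalling that $\int_N H\,d\mu_N=1$ (so $H$ is an admissible test density), we get
\[
\int_N\Big(\tau(|\nabla h|^2+S)+h-n\Big)H\,d\mu_N = \Psi_w(g(t),u(t),\tau,h)\geq \mu_w(g(t),\tau)\geq -B.
\]

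Next I would turn this integral lower bound into the desired pointwise upper bound on $H$. The mechanism is the differential inequality for $H\log H$ (or equivalently for $h$) under the conjugate heat equation on the warped base: using the evolution equation $h_t=-S-\triangle h+|\nabla h|^2+\frac{n}{2\tau}$ from Lemma \ref{monoformwarp}(b') together with the evolution of $S$ in \eqref{evolS} (which gives $\Box S = 2|\triangle u|^2 + 2|\mathcal{S}|^2 \geq 0$, hence $S$ stays above $D=\min\{0,\inf_{N\times\{0\}}S\}$ by the maximum principle applied to \eqref{evolS}), one derives a pointwise bound of the form
\[
\partial_t\big(\log H\big)\ \leq\ \triangle \log H + |\nabla \log H|^2 + (\text{controlled terms in }S,\tau),
\]
and more to the point one estimates $\sup_N H(\cdot,t;y,T)$ in terms of $\int_N\big(\tau|\nabla h|^2 + \tau S + h\big)H\,d\mu_N$ and the lower bound on $S$. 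Concretely: set $\tau_0=(T-t)/2$ and run the standard two-step argument — first bound $\int_N (H\log H)\,d\mu$ at time $t$ using the $\mu_w$ inequality above (with the $\tau S$ term absorbed using $S\geq D$, producing the $-(T-t)D$ contribution), then use a mean-value / Moser-type iteration or the Li--Yau-style pointwise estimate for the conjugate heat kernel to upgrade the integral bound to $\sup_N H(\cdot,t)\leq C(4\pi\tau)^{-n/2}$ with $\log C = B - (T-t)D/3$. The factor $1/3$ arises from the specific constants in balancing the $\tau S$ term against the Gaussian normalization over the time interval $[t,T]$; it is the only place where a numerical computation is genuinely needed, and it is routine.

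The main obstacle I anticipate is making the passage from the integral bound to the pointwise bound clean in the warped setting, where the operator is $\Box_w^\ast = -\partial_t - \triangle_N + S$ rather than the plain conjugate heat operator, and where $S$ need not be bounded below a priori along the flow. The first issue is handled by noting that all the structural identities used in \cite{cz11} (the monotonicity of $\mu$, the Harnack-type differential inequality, the integration-by-parts manipulations) have exact analogues established in Section 3 of this paper — in particular Corollary \ref{entropyadaptedwarp} and Lemma \ref{basicmu}. The second issue — control of $S$ from below — is exactly what \eqref{evolS} provides: since $\partial_t S = \triangle S + 2|\triangle u|^2 + 2|\mathcal{S}_{ij}|^2 \geq \triangle S$, the minimum principle gives $S(\cdot,t)\geq \min\{0,\inf_{N\times\{0\}}S\} = D$ for all $t$, which is what lets the $\tau S$ term be absorbed into the exponent as $-(T-t)D/3$. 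Once these two points are in place, the argument is a direct transcription of the Cao--Zhang estimate, and the stated inequality follows.
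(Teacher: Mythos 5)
Your proposal assembles the right ingredients (monotonicity of $\mu_{w}$ from Lemma \ref{basicmu}(d), the lower bound $S\geq D$ from the maximum principle applied to (\ref{evolS}), and the Cao--Zhang template), but the central analytic step --- passing from entropy information to the pointwise bound on $H$ --- is not actually carried out, and the mechanism you sketch is not the one that works. Testing $\Psi_{w}$ on $H$ itself only yields $\int_{N}hH\,d\mu_{N}\geq -B+n-\tau\int_{N}(|\nabla h|^{2}+S)H\,d\mu_{N}$, i.e.\ an upper bound on the Boltzmann entropy $\int H\log H$; such an integral bound does not control $\sup_{N}H$ (a thin spike of $H$ contributes negligibly to $\int H\log H$), and invoking ``Moser iteration or a Li--Yau-style estimate'' at this point is precisely the gap: those tools would themselves require the kind of $L^{1}\to L^{\infty}$ smoothing you are trying to prove.

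The paper's proof never tests $\Psi_{w}$ on $H$. Instead it runs a Davies-type $L^{p(t)}$-interpolation on an \emph{arbitrary} positive forward heat solution $\Phi$: with $p(l)=T/(T-l)$ one computes $\partial_{t}\log\|\Phi\|_{L^{p}}$, and after the substitution $v^{2}=(4\pi\tau)^{-n/2}e^{-h}=A^{-2}\Phi^{p}$ the resulting expression is exactly $-p'p^{-2}\Psi_{w}(g,u,\tfrac{p-1}{p'},h)$ plus explicit terms. Bounding $\Psi_{w}\geq\mu_{w}\geq -B$ via Lemma \ref{basicmu}(d) and $S\geq D$, and then integrating in $l$ from $0$ to $T$ (the factor $1/3$ is simply $\int_{0}^{T}\frac{(T-l)^{2}}{T^{2}}\,dl=\frac{T}{3}$, not a balancing act) gives $\|\Phi(\cdot,T)\|_{L^{\infty}}\leq e^{B-TD/3}(4\pi T)^{-n/2}\|\Phi(\cdot,0)\|_{L^{1}}$. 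The kernel bound then follows by duality from $\Phi(y,T)=\int_{N}H(x,0;y,T)\Phi(x,0)\,d\mu$ since $\Phi(\cdot,0)\geq 0$ is arbitrary. To repair your argument you would need to replace your first step by this $L^{p(t)}$ computation on forward solutions; as written, the proof does not close.
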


\begin{proof}
Without loss of generality, we may assume that $t=0$. Let $\Phi(y,t)$ be any positive solution to the heat equation along the flow. First, we obtain an upper bound for the $L^{\infty}$-norm of $\Phi(.,T)$ in terms of $L^{1}$-norm of $\Phi(.,0)$.\\
Set $p(l)=\frac{T}{T-l}=\frac{T}{\tau}$ then $p(0)=1$ and $\lim_{l\rightarrow T}p(l)=\infty$. For $A=\sqrt{\int_{N}\Phi^{p}d\mu}$, $v=A^{-1}\Phi^{p/2}$ and $ \nabla \Phi\nabla (v^2\Phi^{-1})=(p-1)p^{-2}4|\nabla v|^2$, integration by parts (IBP) yields 
\begin{align*}
\partial_{t}(\ln{||\Phi||_{L^{p}}})&=-p'p^{-2}\ln(\int_{N}\Phi^{p}d\mu)+(p\int_{N}\Phi^pd\mu)^{-1}\partial_{t}(\int_{N}\Phi^{p}d\mu)
\\&
=-p'p^{-2}\ln(\int_{N}\Phi^{p}d\mu)+(p\int_{N}\Phi^pd\mu)^{-1}\Big(\int_{N}\Phi^{p}(p\Phi^{-1}\Phi'+p'\ln{\Phi}-S)d\mu\Big)
\\&
=-p'p^{-2}\ln(A^2)+p^{-1}A^{-2}\Big(\int_{N}A^2v^2(p\Phi^{-1}\Phi'+p'\frac{2}{p}\ln{(Av)}-S)d\mu\Big)  
\\&
=\int_{N}v^2\Phi^{-1}\triangle \Phi d\mu+p'p^{-2}\int v^2\ln{v^2}-p^{-1}\int_{N} Sv^2d\mu
\\&
=p'p^{-2}\int_{N} v^2\ln{v^2}d\mu-(p-1)p^{-2}\int_{N}4|\nabla v|^2 d\mu-p^{-1}\int_{N} Sv^2d\mu
\\&
=p'p^{-2}\Big(\int_{N}v^2\ln{v^2}d\mu-\frac{p-1}{p'}\int_{N}4|\nabla v|^2d\mu-\frac{p-1}{p'}\int_{N}Sv^2d\nu\Big)
\\&
+((p-1)p^{-2}-p^{-1})\int_{N}Sv^2d\mu.
\end{align*} 
Note that if we set $v^2=(4\pi\tau)^{-n/2}e^{-h}$ then the first term becomes 
$$-p'p^{-2}\Psi_{w}(g,u,\frac{p-1}{p'},h)-n-\frac{n}{2}\ln({4\pi\frac{p-1}{p'}}).$$
We have
\[ p'p^{-2}=\frac{1}{T},  \frac{p-1}{p'}=\frac{l(T-l)}{T},\mbox{ and } (p-1)p^{-2}-p^{-1}=-\frac{(T-l)^2}{T^2}.\]
For $0<t_{0}<T$, $\tau(t_{0})=\frac{t_{0}(T-t_{0})}{T}$ and $\frac{d}{dt}{\tau}=-1$ then $0<\tau(0)=\frac{t_{0}(2T-t_{0})}{T}<T$.
By Lemma \ref{basicmu}, we arrive at
$$-p'p^{-2}\Psi_{w}(g(l),u,\frac{p-1}{p'},h)\leq -\frac{1}{T}\Psi_{w}(g(0),u,\tau(0),h)\leq -\frac{1}{T}\inf_{0<\tau\leq T} \mu_{w}(g(0),\tau)=\frac{B}{T}.$$
Thus $$T\partial_{t}(\ln{|\Phi||_{L^{p}}})\leq B-n-\frac{n}{2}\ln{(4\pi\frac{t(T-t)}{T})}-\frac{(T-t)^2}{T} D,$$ since, by (\ref{evolS}), the minimum of S is nondecreasing along the flow.
Integrating the above inequality yields
$$T\ln{\frac{||\Phi(.,T)||_{L^{\infty}}}{||\Phi(.,0)||_{L^{1}}}}\leq T(B-n-\frac{n}{2}(\ln{(4\pi T)}-2))-\frac{T^{2}}{3}D.$$
Then
$$||\Phi(.,T)||_{L^{\infty}}\leq e^{B-TD/3}(4\pi T)^{-n/2}||\Phi(.,0)||_{L^{1}}.$$
Since 
\begin{equation}
\label{heatbyheatkernel}
\Phi(y,T)=\int_{N}H(x,0,y,T)\Phi(x,0)d\mu_{g(0)}(x),
\end{equation}
and the above inequality holds for any arbitrary positive heat equation, we obtain 
$$H(x,0,y,T) \leq e^{B-TD/3}(4\pi T)^{-n/2}.$$
\end{proof}

\begin{lemma}
\label{gradconjS}Assume there exist $k_{1},k_{2},k_{3}\geq 0$ such that the followings hold on $N\times[0,T]$, 
\begin{align*}
\text{Rc}(g(t)) &\geq -k_{1}g(t),\\
\max\{S,|\nabla{S}|^2\} &\leq k_{2},\\
|\nabla{u}|^2 &\leq k_{3}.
\end{align*}
Let q be any positive solution to the equation $\Box_{w}^{\ast}q=0$ on $N\times [0,T]$ and $\tau=T-t$. If $q<A$ hen there exist $C_{1},C_{2}$ depending on $k_{1},k_{2},k_{3}$ and n such that for $0<\tau\leq \min\{1,T,\frac{1}{2k_{2}}\}$, we have
\begin{equation}
\tau \frac{|\nabla{q}|^2}{q^2}\leq (1+C_{1}\tau)(\ln{\frac{A}{q}}+C_{2}\tau).
\end{equation}
%{\bf b.} There exists B depending on $(M,g\tau)$ so that for $0<\tau\leq \min\{1,T\}$
%$$\tau \frac{|\nabla{q}|^2}{q^2}\leq (1+C_{1}\tau)(\ln\Big(\frac{B}{q\tau^{n/2}}\int_{M}qd\mu_{\tau}\Big)+C_{2}\tau)$$

\end{lemma}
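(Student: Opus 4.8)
The plan is to prove the Li–Yau type gradient estimate for positive solutions of $\Box_w^\ast q = 0$ by the standard maximum principle argument adapted to system (\ref{warpedLie}). First I would normalize and work with $w = \ln(A/q)$, so that $w > 0$; a direct computation using $\Box_w^\ast q = 0$, i.e. $\partial_t q = -\Delta q + S q$, gives an evolution equation for $w$ of the form $\partial_t w = -\Delta w + |\nabla w|^2 - S$ (plus lower-order terms coming from the conjugate, rather than forward, heat operator — this asymmetry is the main nuisance compared to the classical Li–Yau setting). The natural quantity to estimate is $P := \tau \dfrac{|\nabla q|^2}{q^2} - (1 + C_1\tau)(w + C_2\tau) = \tau|\nabla w|^2 - (1+C_1\tau)(w + C_2\tau)$, and the goal is to show $P \le 0$ on $N \times (0, \min\{1,T,\frac{1}{2k_2}\}]$.

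The key steps, in order, would be: (i) derive the evolution equation for $w$ precisely, carefully tracking the sign of the $S$-term and the fact that $d\mu_N$ and hence the conjugate operator carry the extra $+S$ compared to the forward heat operator; (ii) apply $\Box_w := \partial_t + \Delta$ (or rather $\partial_t - \Delta$, whichever is the forward operator adapted to the flow) to $|\nabla w|^2$ using the Bochner formula together with Lemma \ref{evolLap} for $\partial_t \Delta$, which contributes the terms $2\mathcal{S}_{ij}\nabla_i\nabla_j w$ and $-2\Delta u \langle \nabla u, \nabla w\rangle$; (iii) bound the resulting curvature and derivative-of-$S$ terms using the three hypotheses $\mathrm{Rc} \ge -k_1 g$, $\max\{S,|\nabla S|^2\} \le k_2$, $|\nabla u|^2 \le k_3$ — note $\mathcal{S} = \mathrm{Rc} - du\otimes du$ so $\mathcal{S} \ge -(k_1 + k_3)g$ in one direction, and one needs an upper bound as well, which should follow from $S = \mathrm{tr}\,\mathcal{S}$ bounded above together with the lower bound on each eigenvalue; (iv) estimate $\Box_w P$ at an interior maximum, absorbing the bad terms into the good term $-2\tau|\nabla^2 w|^2$ via Cauchy–Schwarz (using $|\Delta w|^2 \le n|\nabla^2 w|^2$) and into the coefficient $(1+C_1\tau)$ by choosing $C_1$ large depending on $k_1, k_2, k_3, n$; (v) handle the behavior as $\tau \to 0$, where $w$ may blow up but $\tau|\nabla w|^2 \to 0$ in an averaged sense, so $P < 0$ near the initial time; conclude $P \le 0$ everywhere by the maximum principle, choosing $C_2$ to dominate the constant (in particular $S$-independent) terms.

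The main obstacle I expect is the bookkeeping in step (iv): unlike the clean Li–Yau computation for the forward heat equation, here the conjugate operator produces an extra $\langle \nabla S, \nabla w\rangle$-type term and an $S|\nabla w|^2$-type term with signs that do not obviously cooperate, and the $\partial_t\Delta$ correction from Lemma \ref{evolLap} injects $\mathcal{S}_{ij}\nabla_i\nabla_j w$ which must be Cauchy–Schwarz'd against the good Hessian-squared term without spoiling the coefficient. Getting the restriction $\tau \le \frac{1}{2k_2}$ to appear naturally — it should come from requiring that the term $-2S\tau|\nabla w|^2$ (or its relatives) be absorbable, i.e. $2k_2\tau \le 1$ — is the delicate point that forces exactly that hypothesis. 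A secondary technical point is justifying the existence of an interior spacetime maximum of $P$ (or applying the maximum principle to $\tau P$ instead to avoid the $\tau\to 0$ degeneracy), which is routine on the closed manifold $N$ but should be stated.
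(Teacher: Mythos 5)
Your strategy (Li--Yau style evaluation of $P=\tau|\nabla w|^2-(1+C_1\tau)(w+C_2\tau)$ at an interior spacetime maximum) is genuinely different from the paper's, which never localizes at a maximum point: the paper forms the $q$-weighted combination $\Phi=a(\tau)\frac{|\nabla q|^2}{q}-b(\tau)\,q\ln\frac{A}{q}-c(\tau)\,q$, verifies $(-\partial_t-\triangle)\Phi\le 0$ pointwise for the explicit choices $a=\tau/(1+[(4+n)k_1+3k_3+1]\tau)$, $b=e^{k_2\tau}$, $c=(e^{k_2\tau}(nk_1+k_3)+k_2)\tau$, and concludes from $\Phi|_{\tau=0}\le 0$ by the plain parabolic maximum principle; the Hessian-square term is simply discarded, and the restriction $\tau\le\frac{1}{2k_2}$ enters only when rewriting $e^{k_2\tau}$ in the form $1+C_1\tau$. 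The cancellation that makes this work is that $(-\partial_t-\triangle)(q\ln\frac{A}{q})=-Sq\ln\frac{A}{q}+Sq+\frac{|\nabla q|^2}{q}$ produces $+\frac{|\nabla q|^2}{q}$ with the favorable sign, which dominates $a'\frac{|\nabla q|^2}{q}$ and the curvature terms.

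In your unweighted formulation that cancellation does not come from where you say it does, and step (iv) as described would fail. Since $(\partial_\tau-\triangle)w=-|\nabla w|^2+S$, the piece $-(1+C_1\tau)w$ contributes $+(1+C_1\tau)|\nabla w|^2$ to $(\partial_\tau-\triangle)P$, and $\partial_\tau(\tau|\nabla w|^2)$ contributes another $+|\nabla w|^2$: a bad term of size $(2+C_1\tau)|\nabla w|^2$ carrying no factor of $\tau$. Cauchy--Schwarz against $-2\tau|\nabla^2 w|^2$ cannot absorb this --- there is no pointwise bound of $|\nabla w|^2$ by $\tau|\nabla^2 w|^2$ plus controlled quantities (consider $w$ nearly affine with large gradient and $\tau$ small) --- and enlarging $C_1$ only worsens it. The term that actually saves the argument is the transport term $-2\tau\langle\nabla w,\nabla|\nabla w|^2\rangle$ arising from $\partial_\tau w\ni-|\nabla w|^2$: at the maximum, $\nabla P=0$ gives $\tau\nabla|\nabla w|^2=(1+C_1\tau)\nabla w$, so this term equals $-2(1+C_1\tau)|\nabla w|^2$ and cancels the bad contribution exactly, leaving $((4k_1+2k_3+1)-C_1)\tau|\nabla w|^2$ plus bounded terms, which suitable $C_1,C_2$ handle. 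You should also record that the hypotheses give the lower bound $S\ge -nk_1-k_3$ (only an upper bound on $S$ is assumed), which is needed to control $-(1+C_1\tau)S$. Finally, your worry in step (v) is moot: $q$ is positive and bounded on all of $N\times[0,T]$, so $P|_{\tau=0}=-w\le 0$ with nothing blowing up.
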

\begin{proof}
We compute
\begin{align*}
(-\partial_{t}-\triangle)\frac{|\nabla{q}|^2}{q}&=S\frac{|\nabla{q}|^2}{q}+\frac{1}{q}(-\partial_{t}-\triangle)|\nabla{q}|^2+2|\nabla{q}|^2\nabla{\frac{1}{q}}\nabla{\ln{q}}-2\nabla|\nabla{q}|^2\nabla{\frac{1}{q}},\\
\frac{1}{q}(-\partial_{t}-\triangle)|\nabla{q}|^2&=\frac{1}{q}\Big[-2\mathcal{S}(\nabla{q},\nabla{q})-2\text{Rc}(\nabla{q},\nabla{q})-2\nabla{q}\nabla{(Sq)}-2|\nabla^2{q}|^2 \Big],\\
2|\nabla{q}|^2\nabla{\frac{1}{q}}\nabla{\ln{q}}&=-2\frac{|\nabla{q}|^4}{q^3},\\
-2\nabla|\nabla{q}|^2\nabla{\frac{1}{q}}&=4\frac{\nabla^2{q}(\nabla{q},\nabla{q})}{q^2}.
\end{align*}
Thus 
\begin{align*}
(-\partial_{t}-\triangle)\frac{|\nabla{q}|^2}{q}&=\frac{-2}{q}|\nabla^2{q}-\frac{dq\otimes dq}{q}|^2+\frac{-4\text{Rc}(\nabla{q},\nabla{q})+2(\nabla{u}\nabla{q})^2-2\nabla{q}\nabla{(Sq)}}{q}+S\frac{|\nabla{q}|^2}{q}\\
&\leq [(4+n)k_{1}+3k_{3}+1]\frac{|\nabla{q}|^2}{q}+k_{2}q.
\end{align*}
Furthermore, we have
\begin{align*}
(-\partial_{t}-\triangle)(q\ln{\frac{A}{q}})&=-Sq\ln{\frac{A}{q}}+Sq+\frac{|\nabla{q}|^2}{q}\\
& \geq \frac{|\nabla{q}|^2}{q}-(nk_{1}+k_{3})q-k_{2}q\ln{\frac{A}{q}}.
\end{align*}
Let $\Phi=a(\tau)\frac{|\nabla{q}|^2}{q}-b(\tau)q\ln{\frac{A}{q}}-cq,$ and we can choose a,b,c appropriately such that $(-\partial_{t}-\triangle)\Phi\leq 0$. For example,
\begin{align*}
a&=\frac{\tau}{1+[(4+n)k_{1}+3k_{3}+1]\tau},\\
b&=e^{k_{2}\tau},\\
c&=(e^{k_{2}\tau}(nk_{1}+k_{3})+k_{2})\tau.
\end{align*}
Then by the maximum principle, noticing that $\Phi\leq 0$ at $\tau=0$, 
$$a\frac{|\nabla{q}|^2}{q}\leq b(\tau)q\ln{\frac{A}{q}}+cq.$$
The result then follows from simple algebra.\\
%{\bf a.} The proof of this part is verbatim from Lei Ni[2.2] using the result of part a and asymptotic behavior of fundamental solution only (if it blows up, it can not blow up too fast) to obtain a mean value inequality
\end{proof}
The next result, mainly from \cite{Muller10}, relates the reduced distance defined in (\ref{adaptedRD}) with the distance at time T.
\begin{lemma}
\label{compareRDandconjheat}
Let $L_{w}(x,\tau)=4\tau\ell_{w}(x,\tau)$ then we have.\\
{\bf a.} Assume that there exists $k_{1},k_{2}\geq 0$ such that $-k_{1}g(t)\leq \mathcal{S}(t) \leq k_{2}g(t)$ for $t\in[0,T]$ then $L_{w}$ is smooth amost everywhere and a local Lipschitz function on $N\times [0,T]$. Furthermore, 
$$e^{-2k_{1}\tau}d_{T}^2(x,y)-\frac{4k_{1}n}{3}\tau^2\leq L_{w}(x,\tau)\leq e^{2k_{2}\tau}d_{T}^2(x,y)+\frac{4k_{2}n}{3}\tau^2 .$$
{\bf b.} $\Box_{w}^{\ast}\Big(\frac{e^{-\frac{L_{w}(x,\tau)}{4\tau}}}{(4\pi\tau)^{n/2}}\Big)\leq 0.$\\
{\bf c.} $H(x,t;y,T)=(4\pi\tau)^{-n/2}e^{-h}$ then $h(x,t;y,T)\leq \ell_{w}(x,T-t)$ . 
\end{lemma}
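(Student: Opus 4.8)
The plan is to build on the fact that the system (\ref{warpedLie}) is List's flow \cite{List08}, a special case of the Ricci--harmonic flow of \cite{Muller09}, for which the full $\mathcal{L}$-geometry was worked out in \cite{Muller10}; assertions (a)--(c) are the $\mathcal{L}_{w}$-analogues of Perelman's statements in \cite{perelman1}, and I will indicate how to assemble them. For (a), existence of minimizing $\mathcal{L}_{w}$-geodesics from $(y,0)$ to any $(x,\tau)$, local Lipschitz regularity of $L_{w}=4\tau\ell_{w}$ on $N\times[0,T]$, and smoothness off a closed measure-zero $\mathcal{L}_{w}$-cut locus all come from the $\mathcal{L}_{w}$-exponential map construction of \cite{Muller10} and go through verbatim. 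For the two-sided comparison I would first translate the hypothesis $-k_{1}g(t)\le\mathcal{S}(t)\le k_{2}g(t)$: since $\partial_{\tau}g=2\mathcal{S}$, a Gr\"onwall comparison gives the metric distortion $e^{-2k_{1}\tau}g(T)\le g(t)\le e^{2k_{2}\tau}g(T)$, and taking traces gives $-nk_{1}\le S\le nk_{2}$. Feeding the square-root reparametrization $\gamma(s)=\beta(\sqrt{s})$ of the $g(T)$-minimizing geodesic $\beta$ from $y$ to $x$ into the defining infimum of $L_{w}$, the distortion bound together with $S\le nk_{2}$ yields the upper bound $L_{w}(x,\tau)\le e^{2k_{2}\tau}d_{T}^{2}(x,y)+\tfrac{4nk_{2}}{3}\tau^{2}$; the lower bound is the symmetric estimate, from $S\ge-nk_{1}$ and, applied to a minimizing $\gamma$, Cauchy--Schwarz together with $|\dot\gamma|^{2}_{g(T)}\le e^{2k_{1}\tau}|\dot\gamma|^{2}_{g(t)}$.

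For (b), put $w=(4\pi\tau)^{-n/2}e^{-\ell_{w}}=e^{-L_{w}(x,\tau)/(4\tau)}/(4\pi\tau)^{n/2}$. A direct computation (using $\partial_{t}=-\partial_{\tau}$ and $\ell_{w}=L_{w}/(4\tau)$) gives
\[
\Box_{w}^{\ast}w=-w\Big(\partial_{\tau}\ell_{w}-\triangle\ell_{w}+|\nabla\ell_{w}|^{2}-S+\tfrac{n}{2\tau}\Big),
\]
so (b) is equivalent to the differential inequality $\partial_{\tau}\ell_{w}-\triangle\ell_{w}+|\nabla\ell_{w}|^{2}-S+\tfrac{n}{2\tau}\ge0$. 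At points where $\ell_{w}$ is smooth this follows, as in \cite{perelman1,Muller10}, from the first and second variations of $\mathcal{L}_{w}$: the first variation identifies $\nabla\ell_{w}$ with the terminal velocity of the minimizing geodesic and produces formulas for $|\nabla\ell_{w}|^{2}$ and $\partial_{\tau}\ell_{w}$ in terms of a reduced integral $K_{w}$, while the second variation, tested on parallel fields rescaled by $\sqrt{s/\tau}$, gives the matching upper bound for $\triangle\ell_{w}$, the necessary positivity being supplied by the evolution equation (\ref{evolS}), $\partial_{t}S=\triangle S+2|\triangle u|^{2}+2|\mathcal{S}|^{2}\ge\triangle S+\tfrac{2}{n}S^{2}$. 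Adding the three relations cancels the $K_{w}$-terms and leaves the inequality; since $L_{w}$ is a pointwise minimum of smooth functions, it persists across the $\mathcal{L}_{w}$-cut locus in the barrier, hence distributional, sense by the Calabi trick, which is all the subsequent maximum principle needs.

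For (c), $w$ is a subsolution $\Box_{w}^{\ast}w\le0$ by (b), while $H=(4\pi\tau)^{-n/2}e^{-h}$ solves $\Box_{w}^{\ast}H=0$. By (a), $L_{w}(x,\tau)\to d_{T}^{2}(x,y)$ as $\tau\to0^{+}$, so $w$ is asymptotic to the Gaussian $(4\pi\tau)^{-n/2}e^{-d_{T}^{2}(x,y)/(4\tau)}$ and concentrates at $y$ with total mass tending to $1$, exactly as $H$ does by Theorem \ref{asymptoticconj}. For any positive solution $\Phi$ of the heat equation $\partial_{t}\Phi=\triangle\Phi$ along the flow, integration by parts gives $\tfrac{d}{dt}\int_{N}\Phi\,w\,d\mu_{N}=-\int_{N}\Phi\,(\Box_{w}^{\ast}w)\,d\mu_{N}\ge0$, so $t\mapsto\int_{N}\Phi\,w\,d\mu_{N}$ is non-decreasing, while the same quantity with $H$ in place of $w$ is constant and equal to $\Phi(y,T)$ by (\ref{heatbyheatkernel}). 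Choosing $\Phi$ to be the forward heat kernel based at an arbitrary $(x_{1},t_{1})$ with $t_{1}<T$, and letting first $t\to t_{1}^{+}$ and then $t\to T^{-}$, yields $w(x_{1},t_{1})\le\Phi(y,T)=H(x_{1},t_{1})$, i.e. $e^{-\ell_{w}(x_{1},T-t_{1})}\le e^{-h(x_{1},t_{1})}$, which is the claim $h(x,t;y,T)\le\ell_{w}(x,T-t)$.

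The main obstacle is (b): faithfully transcribing the second variation estimate and the bookkeeping of the reduced integral $K_{w}$ so that (\ref{evolS}) enters with the correct sign, and then handling the $\mathcal{L}_{w}$-cut locus via the Calabi argument. In (c) the only delicate point is making rigorous the approximate-identity behaviour of $w$ as $\tau\to0$, and for this the lower bound of (a) is exactly what is required.
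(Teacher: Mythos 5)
Your overall route---Müller's $\mathcal{L}$-geometry for the system (\ref{warpedLie}) in parts (a) and (b), followed by a comparison with the fundamental solution in (c)---is the same as the paper's, which disposes of (a) by citing \cite[Lemma 4.1]{Muller10} and of (b) by citing \cite[Lemma 5.15]{Muller10}. Your part (c) is a duality implementation (pairing $w$ and $H$ against forward heat kernels and using $\Box_{w}^{\ast}w\le 0$ plus the Gaussian asymptotics from (a)) of the maximum-principle comparison the paper invokes directly; that part is sound, as is your sketch of (a).

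The genuine gap is in (b). The hypothesis of Müller's Lemma 5.15---equivalently, the quantity whose non-negativity makes the first/second variation bookkeeping close---is not the reaction term in the evolution equation of $S$, but the full expression
\begin{equation*}
\mathcal{D}(\mathcal{S},X)=\partial_{t}S-\triangle S-2|\mathcal{S}|^{2}+4(\nabla_{i}\mathcal{S}_{ij})X_{j}-2(\nabla_{j}S)X_{j}+2(\text{Rc}-\mathcal{S})(X,X),
\end{equation*}
which must be $\ge 0$ for \emph{every} vector field $X$ (in the application $X$ is the velocity of the minimizing $\mathcal{L}_{w}$-geodesic, generically nonzero). Your proposal supplies only the $X=0$ case via (\ref{evolS}). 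For pure Ricci flow the $X$-dependent terms vanish by the contracted second Bianchi identity, but here $\mathcal{S}=\text{Rc}-du\otimes du$, so $\text{div}\,\mathcal{S}\neq\frac{1}{2}\nabla S$ and those terms survive. The actual content of part (b) in this setting is the verification that, combining (\ref{evolS}) with the Bianchi identity, they assemble into a perfect square:
\begin{equation*}
\mathcal{D}(\mathcal{S},X)=2(\triangle u)^{2}-4\triangle u\left\langle{\nabla u,X}\right\rangle+2\left\langle{\nabla u,X}\right\rangle^{2}=2\left(\triangle u-\left\langle{\nabla u,X}\right\rangle\right)^{2}\geq 0.
\end{equation*}
Without this computation the second-variation upper bound for $\triangle\ell_{w}$ is not available with the stated reduced integral, the $K_{w}$-cancellation you describe does not produce $\Box_{w}^{\ast}w\le 0$, and the condition $\partial_{t}S-\triangle S\ge\frac{2}{n}S^{2}$ you cite is neither what is needed nor sufficient.
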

\begin{proof}
{\bf a.} This follows from the result \cite[Lemma 4.1]{Muller10} for general flows.\\
{\bf b.} This follows from \cite[Lemma 5.15]{Muller10}. The key assumption is the non-negativity of the quantity,
\begin{equation*}
\mathcal{D}(\mathcal{S},X)=\partial_{t}S-\triangle{S}-2|\mathcal{S}|^2+4(\nabla_{i}\mathcal{S}_{ij})X_{j}-2(\nabla_{j}S)X_{j}+2(\text{Rc}-\mathcal{S})(X,X).
\end{equation*} 
In our case, applying (\ref{evolS}) and the second Bianchi identity yields
\begin{align*}
\mathcal{D}(\mathcal{S},X)&=2(\triangle{u})^2+4\nabla^{i}(\RR_{ij}-u_{i}u_{j})X^{j}-2\nabla_{j}(\RR-|\nabla{u}|^2)X^{j}+2du\otimes du(X,X)\\
&=2(\triangle{u})^2-4\triangle{u}\left\langle{\nabla{u},X}\right\rangle+2\left\langle{\nabla{u},X}\right\rangle^2=2(\triangle{u}-\left\langle{\nabla{u},X}\right\rangle)^2\geq 0.
\end{align*}
{\bf c.} We first observe that part a) implies $\lim_{\tau\rightarrow 0}L_{w}(x,\tau) =d_{T}^2(y,x)$ and, hence,
\[\lim_{\tau\rightarrow 0}\frac{e^{-\frac{L_{w}(x,\tau)}{4\tau}}}{(4\pi\tau)^{n/2}}=\delta_{y}(x),\]
since locally Riemannian manifolds look like Euclidean. It follows immediately from part b) and the maximum principle that, 
\begin{equation*}
H(x,t;y,T)\geq \frac{e^{-\frac{L_{w}(x,\tau)}{4\tau}}}{(4\pi\tau)^{n/2}}=\frac{e^{-\frac{L_{w}(x,T-t)}{4\tau}}}{(4\pi(T-t))^{n/2}}.
\end{equation*}
Hence we have,
\begin{equation*}
h(x,t;y,T)\leq \frac{L_{w}(x,\tau)}{4\tau}=\ell_{w}(x,\tau)=\ell_{w}(x,T-l).
\end{equation*}
\end{proof}
A direct consequence is the following estimate on the heat kernel.
\begin{lemma}
\label{integralboundabove} We have $\int_{N}hH\Phi d\mu_{N}\leq \frac{n}{2}\Phi(y,T)$, i.e, $\int_{N}(h-\frac{n}{2})H\Phi d\mu_{N}\leq 0.$
\end{lemma}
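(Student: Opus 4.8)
The plan is to combine the pointwise bound $h(x,t;y,T)\leq \ell_{w}(x,T-t)$ from Lemma~\ref{compareRDandconjheat}(c) with an integral identity for the reduced distance against a positive solution of the heat equation. Concretely, since $H\Phi\,d\mu_{N}\geq 0$, the inequality $h\leq \ell_{w}$ gives
\[
\int_{N} hH\Phi\,d\mu_{N}\leq \int_{N}\ell_{w}(x,\tau)H\Phi\,d\mu_{N}=\int_{N}\frac{L_{w}(x,\tau)}{4\tau}H\Phi\,d\mu_{N},
\]
so it suffices to show that the right-hand side tends to $\tfrac{n}{2}\Phi(y,T)$ as $\tau\to 0$, together with a monotonicity (in $\tau$) statement guaranteeing that for the heat kernel $H$ itself the quantity $\int_{N}\ell_{w}H\Phi\,d\mu_{N}$ is non-increasing in $t$ toward its limiting value. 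The first step is therefore to use Lemma~\ref{compareRDandconjheat}(a), which sandwiches $L_{w}(x,\tau)$ between $e^{-2k_1\tau}d_T^2(x,y)-\tfrac{4k_1 n}{3}\tau^2$ and $e^{2k_2\tau}d_T^2(x,y)+\tfrac{4k_2 n}{3}\tau^2$, to replace $L_{w}/4\tau$ by $d_T^2(x,y)/4\tau$ up to errors of order $\tau$.

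Next I would analyze $\int_{N}\tfrac{d_T^2(x,y)}{4\tau}H(x,t;y,T)\Phi(x,t)\,d\mu_{N}$ using the asymptotic expansion of the heat kernel in Theorem~\ref{asymptoticconj}: near $\tau=0$, $H(x,t;y,T)\sim (4\pi\tau)^{-n/2}e^{-d_T^2(x,y)/4\tau}\bigl(u_0(x,y,\tau)+\tau u_1+\cdots\bigr)$ with $u_0(y,y,0)=1$. Rescaling normal coordinates centered at $y$ by $\sqrt{\tau}$, the measure $(4\pi\tau)^{-n/2}e^{-d_T^2/4\tau}\,d\mu_{N}$ converges weakly to the standard Gaussian, against which $\Phi$ contributes its value $\Phi(y,T)$ (by continuity of $\Phi$ and the fact that $\Phi$ solves the heat equation, so $\Phi(\cdot,t)\to\Phi(\cdot,T)$), and $\tfrac{d_T^2(x,y)}{4\tau}$ contributes the Gaussian second moment, which in $n$ dimensions equals $\tfrac{n}{2}$. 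Hence the limit is $\tfrac{n}{2}\Phi(y,T)$; the error terms from $u_1,\ldots$ and from the $O(\tau^2)$ discrepancy in $L_{w}$ vanish in the limit because they carry extra powers of $\tau$. This gives $\lim_{\tau\to 0}\int_{N}\ell_{w}H\Phi\,d\mu_{N}=\tfrac{n}{2}\Phi(y,T)$.

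To upgrade this limit to the stated inequality for all $t<T$, I would invoke the monotonicity built into the construction: by Lemma~\ref{compareRDandconjheat}(b), $\Box_{w}^{\ast}\bigl((4\pi\tau)^{-n/2}e^{-L_{w}/4\tau}\bigr)\leq 0$, and a standard computation (as in Perelman's original argument, or Müller's adaptation) shows that $w:=\bigl((T-t)(2\triangle\ell_{w}-|\nabla\ell_{w}|^2+S)+\ell_{w}-n\bigr)\cdot(4\pi\tau)^{-n/2}e^{-\ell_{w}}\leq 0$, which in integrated-against-$\Phi$ form yields that $t\mapsto \int_{N}(\ell_{w}-\tfrac n2)H\Phi\,d\mu_{N}$ — or more directly the conjugate-heat monotonicity $\tfrac{d}{dt}\int_{N}\ell_{w}H\Phi\,d\mu_{N}\geq 0$ is forced by combining the supersolution property with $\partial_t\Phi=\triangle\Phi$ and $\Box_w^\ast H=0$ via integration by parts. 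Thus $\int_{N}\ell_{w}H\Phi\,d\mu_{N}\leq \lim_{\tau\to 0}\int_{N}\ell_{w}H\Phi\,d\mu_{N}=\tfrac n2\Phi(y,T)$ for every $t<T$, and combining with $h\leq \ell_{w}$ finishes the proof.

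The main obstacle I anticipate is making the weak-convergence/Gaussian-moment computation fully rigorous: one must control the heat kernel uniformly in $x$ (not just near the diagonal) so that the mass away from $y$ is negligible — here the Gaussian upper bound from Lemma~\ref{conjestimateS} combined with the off-diagonal decay implicit in Theorem~\ref{asymptoticconj} does the job — and one must justify interchanging the limit $\tau\to 0$ with the integral, which requires the remainder estimate $w_k=O(\tau^{k+1-n/2})$ for $k$ large enough. A secondary subtlety is that $\Phi$ is a solution of the \emph{forward} heat equation while $H$ solves the conjugate equation, so one should double-check the sign in the integration by parts that produces the monotonicity; this is routine but easy to get backwards.
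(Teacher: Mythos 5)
Your first two paragraphs reproduce the paper's argument exactly: the pointwise bound $h\leq \ell_{w}$ from Lemma~\ref{compareRDandconjheat}(c), the comparison $L_{w}(x,\tau)\leq e^{2k_{2}\tau}d_{T}^{2}(x,y)+O(\tau^{2})$ from part (a), and then the Gaussian second-moment computation $\lim_{\tau\to 0}\int_{N}\frac{d_{T}^{2}(x,y)}{4\tau}H\Phi\,d\mu_{N}=\frac{n}{2}\Phi(y,T)$ via the heat-kernel asymptotics of Theorem~\ref{asymptoticconj}. Note that this is all the paper itself does: despite the bare wording of the statement, the lemma is proved, and later invoked in Proposition~\ref{limit0}, only as a $\limsup_{\tau\to 0}$ assertion, so your first two paragraphs already constitute a complete proof of what is actually needed.

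Your third paragraph, which tries to upgrade the limit to an inequality valid at every fixed $t<T$, contains a genuine gap. The supersolution property $\Box_{w}^{\ast}\bigl((4\pi\tau)^{-n/2}e^{-L_{w}/4\tau}\bigr)\leq 0$, and likewise the Perelman--M\"uller inequality $w\leq 0$ you quote, are statements about the weight $(4\pi\tau)^{-n/2}e^{-\ell_{w}}$; they do not transfer by integration by parts to the mixed quantity $\int_{N}\ell_{w}H\Phi\,d\mu_{N}$, whose time derivative produces a $\partial_{t}\ell_{w}$ term against $H\Phi\,d\mu_{N}$ that is not controlled by $\Box_{w}^{\ast}H=0$ and $\partial_{t}\Phi=\triangle\Phi$ alone. (The monotone quantity in this circle of ideas is $\int_{N}v\Phi\,d\mu_{N}$ with $v$ built from $H$ itself, as in Proposition~\ref{limit0}, not $\int_{N}\ell_{w}H\Phi\,d\mu_{N}$.) Since the claimed monotonicity is both unjustified and unnecessary for the lemma as used, you should simply drop that paragraph and state the conclusion as a $\limsup_{\tau\to 0}$ bound.
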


\begin{proof}
By lemma \ref{compareRDandconjheat} we have
\begin{align*}
\limsup_{\tau\rightarrow 0}\int_{N}hH\Phi d\mu_{N}\leq \limsup_{\tau\rightarrow 0}\int_{N} \ell_{w}(x,\tau)H\Phi d\mu_{N}(x)\\
\leq \limsup_{\tau\rightarrow 0}\int_{N} \frac{d_{T}^2(x,y)}{4\tau}H\Phi d\mu_{N}(x).
\end{align*}
Using Lemma \ref{asymptoticconj}, 
\begin{equation*}
\lim_{\tau\rightarrow 0}\int_{N} \frac{d_{T}^2(x,y)}{4\tau}H\Phi d\mu_{N}(x)=\lim_{\tau\rightarrow 0}\int_{N} \frac{d_{T}^2(x,y)}{4\tau}\frac{e^{-\frac{d_{T}^2(x,y)}{4\tau}}}{(4\pi\tau)^{n/2}}\Phi d\mu_{N}(x).
\end{equation*}
Either by differentiating twice under the integral sign or using these following identities on Euclidean spaces
\[\int_{-\infty}^{\infty}e^{-a\textbf{x}^2}d\textbf{x} =\sqrt{\frac{\pi}{a}} \text{ and } \int_{-\infty}^{\infty}\textbf{x}^{2}e^{-a\textbf{x}^2}d\textbf{x} =\frac{1}{2a}\sqrt{\frac{\pi}{a}},
\]
we obtain
\begin{equation*}
\int_{\RR^{n}}|x|^{2}e^{-a|x|^2}dx=n(\int_{-\infty}^{\infty}\textbf{x}^{2}e^{-a\textbf{x}^2}d\textbf{x})\Big(\int_{-\infty}^{\infty}e^{-a\textbf{x}^2}d\textbf{x}\Big)^{n-1}=\frac{n}{2a}(\frac{\pi}{a})^{n/2}.
\end{equation*}
Therefore,
\begin{equation*}
\lim_{\tau\rightarrow 0}\frac{d_{T}^2(x,y)}{4\tau}\frac{e^{-\frac{d_{T}^2(x,y)}{4\tau}}}{(4\pi\tau)^{n/2}}=\frac{n}{2}\delta_{y}(x)
\end{equation*}
and so 
\begin{equation*}
\lim_{\tau\rightarrow 0}\int_{N} \frac{d_{T}^2(x,y)}{4\tau}\frac{e^{-\frac{d_{T}^2(x,y)}{4\tau}}}{(4\pi\tau)^{n/2}}\Phi d\mu_{N}(x)=\frac{n}{2}\Phi(y,T).
\end{equation*}
Thus the result follows.
\end{proof}
\begin{remark} In fact, the equality actually holds (See the proof of Theorem \ref{conjandRD}).
\end{remark}
\begin{proposition} 
\label{limit0}
Let $v=\Big((T-t)(2\triangle{h}-|\nabla{h}|^2+S)+h-n\Big)H$ then  \\
{\bf a.} $\Box_{w}^{\ast}v=-2(T-t)\Big(|\mathcal{S}+\text{Hess}h-\frac{g}{2\tau}|^2+|\triangle u-\nabla{u}\nabla{h}|^2\Big)H\leq 0;$\\
{\bf b.} If  $\rho_{\Phi}(t)=\int_{N}v\Phi d\mu_{N}$, then $\lim_{t\rightarrow T}\rho_{\Phi}(t)=0$.
\end{proposition}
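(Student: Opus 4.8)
\textbf{Proof proposal for Proposition \ref{limit0}.}
The plan is to treat the two parts separately, as part \textbf{a} is a direct computation while part \textbf{b} requires the asymptotic machinery already assembled. For part \textbf{a}, I would start from the definition of $v$ and apply $\Box_w^\ast = -\partial_t - \triangle + S$ term by term. The factor $(T-t)=\tau$ contributes a $+1$ when hit by $-\partial_t$; the bracket $w:=2\triangle h - |\nabla h|^2 + S$ and the remaining piece $h-n$ must be differentiated using the evolution equation for $h$ from Lemma \ref{baseandwhole}(b) (in the form appropriate to system (\ref{warpedLie}), i.e. $h_t = -S - \triangle h + |\nabla h|^2 + \frac{n}{2\tau}$), together with the evolution of $\triangle$ from Lemma \ref{evolLap} and of $S$ from (\ref{evolS}). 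The natural route is to first verify the Perelman-type pointwise identity $\Box_w^\ast\big(v/H\cdot H\big)$ by writing $v = P H$ with $P = \tau w + h - n$ and using $\Box_w^\ast(PH) = H(-\partial_t - \triangle)P - 2\nabla P\cdot\nabla H + P\,\Box_w^\ast H = H(-\partial_t P - \triangle P + 2\nabla P\cdot\nabla h)$ since $\Box_w^\ast H + p\nabla u\nabla H$-type terms vanish — here because we are in system (\ref{warpedLie}) the extra drift is absent and $\Box_w^\ast H = 0$. Then the computation collapses, exactly as in Perelman's original and as mirrored in Corollary \ref{entropyadaptedwarp}, to the stated sum of squares $-2\tau\big(|\mathcal{S}+\mathrm{Hess}\,h - \frac{g}{2\tau}|^2 + |\triangle u - \nabla u\nabla h|^2\big)H$; the two square terms are precisely the two integrands appearing in the entropy monotonicity formula, which is the sanity check I would use to catch sign errors.

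For part \textbf{b}, the goal is $\lim_{t\to T}\int_N v\,\Phi\,d\mu_N = 0$. Write $v/H = \tau(2\triangle h - |\nabla h|^2 + S) + h - n$, so that $\int_N v\Phi\,d\mu_N = \int_N \big[\tau(2\triangle h - |\nabla h|^2 + S) + h - n\big] H\Phi\,d\mu_N$. I would handle the pieces as follows: the term $\int_N (h-n)H\Phi\,d\mu_N$ is controlled by Lemma \ref{integralboundabove}, which gives $\limsup \le (\frac{n}{2}-n)\Phi(y,T) = -\frac{n}{2}\Phi(y,T)$ as an upper bound, but I actually want the exact limit, so the cleaner approach is to combine $\int h H\Phi$ with the $\tau$-weighted Laplacian term. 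Indeed, by integration by parts against the smooth test function $\Phi$ (which stays bounded near $T$), $\int_N \tau(2\triangle h)H\Phi\,d\mu_N$ can be rewritten; more efficiently, one uses the identity $2\triangle h - |\nabla h|^2 = \frac{2\triangle H}{H} - \frac{|\nabla H|^2}{H^2}\cdot(\text{something})$ — concretely $-H(2\triangle h - |\nabla h|^2 + S) = (2\partial_t + \text{conjugate terms})H$ from the conjugate heat equation itself, which shows $\tau(2\triangle h - |\nabla h|^2 + S)H = -2\tau\,\partial_\tau\big((4\pi\tau)^{-n/2}e^{-h}\big)\cdot(\dots)$. The honest way: use that $H$ solves $\Box_w^\ast H = 0$, i.e. $\partial_t H = -\triangle H + SH$, to express $\tau S H + \tau\triangle(\text{stuff})$ in terms of $\tau\partial_t h$, and then invoke the asymptotic expansion of Theorem \ref{asymptoticconj} together with the Gaussian moment computations from the proof of Lemma \ref{integralboundabove}. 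The key facts feeding in are: $H\Phi\,d\mu_N \to \delta_y$, $\tau\cdot\frac{|\nabla H|^2}{H} \to 0$ in the relevant integrated sense (by Lemma \ref{gradconjS} applied to $q=H$, giving $\tau|\nabla h|^2 \le (1+C_1\tau)(h + \frac{n}{2}\ln(4\pi\tau) + C_2\tau)$, which is $O(\tau\log\frac1\tau)\cdot$Gaussian near the diagonal after using $h \sim d_T^2/4\tau$), and $\int_N \frac{d_T^2}{4\tau}H\Phi\,d\mu_N \to \frac{n}{2}\Phi(y,T)$ from Lemma \ref{integralboundabove}. Assembling: the $\tau$-Laplacian and $\tau S$ terms vanish in the limit, $\int h H\Phi \to \frac{n}{2}\Phi(y,T)$, and $\int(-n)H\Phi \to -n\Phi(y,T)$... which does not obviously cancel, so the correct bookkeeping must show $\int \tau(2\triangle h - |\nabla h|^2)H\Phi\,d\mu_N \to \frac{n}{2}\Phi(y,T)$ as well, via the second-order term in the heat-kernel expansion (differentiating the Gaussian twice, exactly the $\int |x|^2 e^{-a|x|^2}$ computation already on hand).

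The main obstacle I anticipate is precisely this last point: controlling $\int_N \tau(2\triangle h - |\nabla h|^2 + S)H\Phi\,d\mu_N$ in the limit $\tau\to 0$. Naively $\tau$ kills everything, but $\triangle h$ and $|\nabla h|^2$ each blow up like $1/\tau$ near the diagonal (since $h \sim d_T^2/4\tau$, so $|\nabla h|^2 \sim d_T^2/4\tau^2$ and $\triangle h \sim n/2\tau$), so the product is genuinely $O(1)$ and contributes to the limit. One must therefore use the refined asymptotics of Theorem \ref{asymptoticconj} — not just $u_0(x,x,0)=1$ but the structure of the expansion — or, more elegantly, avoid pointwise blow-up entirely by using the conjugate equation $\Box_w^\ast H = 0$ to trade $\tau\triangle H$ and $\tau S H$ for $\tau\partial_t H = -\tau\partial_\tau H$, integrate that against $\Phi$, and then differentiate the normalization $\int_N H\,d\mu_N$-type quantities in $\tau$. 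I would present it this way: show $\rho_\Phi(t) = \int_N v\Phi\,d\mu_N$ satisfies $\rho_\Phi' = \int_N (\Box_w^\ast v)(-\Phi) - v(\text{err})\,d\mu_N \le \text{(nonnegative)}$ using part \textbf{a} and $\partial_t\Phi = \triangle\Phi$, hence $\rho_\Phi$ is monotone; combined with the upper bound $\rho_\Phi \le 0$ coming from an integrated version of the maximum-principle comparison (or from Lemma \ref{integralboundabove} applied carefully), monotonicity forces the limit to exist, and a dominated-convergence argument using Lemma \ref{conjestimateS} (the Gaussian upper bound on $H$) and Lemma \ref{gradconjS} identifies it as $0$. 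This monotonicity-plus-bound strategy sidesteps the delicate pointwise cancellation and is the route I would actually write up.
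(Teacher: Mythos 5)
Your part \textbf{a} is correct and coincides with the paper's argument: writing $v=PH$ with $P=\tau(2\triangle h-|\nabla h|^{2}+S)+h-n$, using $\Box_{w}^{\ast}H=0$ to reduce to $H(-\partial_{t}P-\triangle P+2\left\langle{\nabla P,\nabla h}\right\rangle)$, and collapsing to the two squares via Lemma \ref{evolLap}, equation (\ref{evolS}) and Bochner's identity is exactly what the paper does.

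Part \textbf{b} has a genuine gap: you never produce the lower bound $\lim_{t\rightarrow T}\rho_{\Phi}(t)\geq 0$. Your strategy (monotonicity of $\rho_{\Phi}$ from part \textbf{a}, plus an upper bound, plus ``dominated convergence identifies the limit as $0$'') can at best show that the limit $\alpha$ exists and $\alpha\leq 0$. All the asymptotic input you cite is one-sided: Lemma \ref{compareRDandconjheat} gives only $h\leq\ell_{w}$, hence Lemma \ref{integralboundabove} gives only $\limsup\int hH\Phi\,d\mu_{N}\leq\frac{n}{2}\Phi(y,T)$ rather than the exact limit; and the second-order term $\int\tau(2\triangle h-|\nabla h|^{2}+S)H\Phi\,d\mu_{N}$, which you correctly identify as having to contribute $\frac{n}{2}\Phi(y,T)$, cannot be evaluated by ``differentiating the Gaussian twice'' without a two-sided expansion of $h$ that is nowhere established (Theorem \ref{asymptoticconj} expands $H$, and differentiating an asymptotic expansion twice is not automatic). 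The paper closes both halves by different devices. For $\alpha\leq 0$ it applies the mean value theorem to $\rho_{\Phi}(T-\tau)-\rho_{\Phi}(T-\frac{\tau}{2})\rightarrow 0$ together with part \textbf{a} to extract a subsequence $\tau_{i}$ along which $\tau_{i}^{2}\int|\mathcal{S}+\text{Hess}\,h-\frac{g}{2\tau}|^{2}H\Phi\,d\mu_{N}\rightarrow 0$; Cauchy--Schwarz on the trace then kills $\int\tau_{i}(S+\triangle h-\frac{n}{2\tau_{i}})H\Phi\,d\mu_{N}$, and one more integration by parts gives $\lim\rho_{\Phi}=\int(h-\frac{n}{2})H\Phi\,d\mu_{N}\leq 0$ by Lemma \ref{integralboundabove}. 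For $\alpha\geq 0$ --- the step entirely absent from your proposal --- it writes $H\Phi=(4\pi\tau)^{-n/2}e^{-\tilde{h}}$ and identifies $\rho_{\Phi}(t)=\Psi_{w}(g,u,\tau,\tilde{h})+\int\big(\tau\frac{|\nabla\Phi|^{2}}{\Phi}-\Phi\ln\Phi\big)H\,d\mu_{N}$, whose second term vanishes as $\tau\rightarrow 0$; then $\alpha<0$ would force $\lim_{\tau\rightarrow 0}\mu_{w}(g,u,\tau)<0$, contradicting Lemma \ref{basicmu}. This identification of $\rho_{\Phi}$ with the entropy $\Psi_{w}$, bounded below by $\mu_{w}$ with $\lim_{\tau\rightarrow 0^{+}}\mu_{w}=0$, is the key idea your write-up is missing.
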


\begin{proof}
{\bf a.} 
Let $q=2\triangle{h}-|\nabla{h}|^2+S$ then 
\begin{align*}
H^{-1}\Box_{w}^{\ast}v&=-(\partial_{t}+\Delta)(\tau q+h)-2\left\langle{\nabla(\tau q+h),H^{-1}\nabla{H}}\right\rangle\\
&=q-\tau(\partial_{t}+\Delta)q-(\partial_{t}+\Delta)h+2\tau\left\langle{\nabla q,\nabla{h}}\right\rangle+2|\nabla{h}|^2.
\end{align*}
As H satisfies $\Box_{w}^{\ast}H=0$, $(\partial_{t}+\Delta)h=-S+|\nabla{h}|^2+\frac{n}{2\tau}$.
We compute
\begin{align*}
(\partial_{t}+\Delta)\Delta{h}&=\Delta\frac{\partial{h}}{\partial{t}}+2\left\langle{\mathcal{S},\text{Hess}(h)}\right\rangle-2\Delta{u}\left\langle{\nabla{u},\nabla{h}}\right\rangle+\Delta(\Delta h)\\
&=\Delta(-\Delta{h}+|\nabla{h}|^2-S+\frac{n}{2\tau}+\Delta(\Delta h)\\
&+2\left\langle{\mathcal{S},\text{Hess}(h)}\right\rangle-2\Delta{u}\left\langle{\nabla{u},\nabla{h}}\right\rangle\\
&=\Delta(|\nabla{h}|^2-S)+2\left\langle{\mathcal{S},\text{Hess}(h)}\right\rangle-2\Delta{u}\left\langle{\nabla{u},\nabla{h}}\right\rangle,
\end{align*}
where we use Lemma \ref{evolLap}.
\begin{align*}
(\partial_{t}+\Delta)|\nabla{h}|^2=&2\mathcal{S}(\nabla{h},\nabla{h})+2\left\langle{\nabla{h},\nabla{\frac{\partial{h}}{\partial{t}}}}\right\rangle+\Delta{|\nabla{h}|^2}\\
=&2\left\langle{\nabla{h},\nabla(-\triangle{h}+|\nabla{h}|^2-S)}\right\rangle\\
&+2\mathcal{S}(\nabla{h},\nabla{h})+\Delta{|\nabla{h}|^2}.
\end{align*}
Recall from (\ref{evolS}), $(\partial_{t}+\Delta)S=2\Delta{S}+2|\mathcal{S}|^2+2|\Delta{u}|^2,$ and
\begin{align*}
2\mathcal{S}(\nabla{h},\nabla{h})&=2\text{Rc}(\nabla{h},\nabla{h})-2du\otimes du(\nabla{h},\nabla{h})=2\text{Rc}(\nabla{h},\nabla{h})-2\left\langle{\nabla{u},\nabla{h}}\right\rangle^2\\
\Delta{|\nabla{h}|^2}&= 2\text{Hess}(h)^2+2\left\langle{\nabla{h},\nabla{\Delta{h}}}\right\rangle+2\text{Rc}(\nabla{h},\nabla{h}),
\end{align*}
where the second equation is by Bochner's identity. Combining those above yields
\begin{align*}
(\partial_{t}+\Delta)q=&4\left\langle{\mathcal{S},\text{Hess}(h)}\right\rangle-4\Delta{u}\left\langle{\nabla{u},\nabla{h}}\right\rangle+\Delta|\nabla{h}|^2\\
&-2\mathcal{S}(\nabla{h},\nabla{h})-2\left\langle{\nabla{h},\nabla(-\triangle{h}+|\nabla{h}|^2-S)}\right\rangle+2|\mathcal{S}|^2+2|\Delta{u}|^2\\
=&4\left\langle{\mathcal{S},\text{Hess}(h)}\right\rangle-4\Delta{u}\left\langle{\nabla{u},\nabla{h}}\right\rangle+2\left\langle{\nabla{h},\nabla q}\right\rangle+2\text{Hess}(h)^2\\
&+2|\mathcal{S}|^2+2|\Delta{u}|^2+2\left\langle{\nabla{u},\nabla{h}}\right\rangle^2\\
=&2|\mathcal{S}+\text{Hess}(h)|^2+2|\Delta{u}-\left\langle{\nabla{u},\nabla{h}}\right\rangle|^2+2\left\langle{\nabla{h},\nabla q}\right\rangle.
\end{align*}
Thus, 
\begin{align*}
H^{-1}\Box_{w}^{\ast}v&=q+S-|\nabla{h}|^2-\frac{n}{2\tau}+2|\nabla{h}|^2\\
&-2\tau(|\mathcal{S}+\text{Hess}(h)|^2+2|\Delta{u}-\left\langle{\nabla{u},\nabla{h}}\right\rangle|^2)\\
&=-2\tau\Big(|\mathcal{S}+\text{Hess}(h)-\frac{g}{2\tau}|^2+|\triangle u-\nabla{u}\nabla{h}|^2\Big).
\end{align*}
The result follows.\\

{\bf b.} IBP yields
\begin{align*}
\rho_{\Phi}(t) &=\int_{N}\Big(\tau(2\triangle{h}-|\nabla h|^2+S)+h-n\Big)H\Phi d\mu_{N}\\
&=-\int_{N}2\tau\nabla{h}\nabla({H\Phi})d\mu_{N}-\int_{N}\tau|\nabla h|^2 H\Phi d\mu_{N}+\int_{N}(\tau S+h-n)H\Phi d\mu_{N}\\
&= \int_{N}\tau|\nabla h|^2 H\Phi d\mu_{N}-2\tau\int_{N} \nabla{\Phi}\nabla{h}H d\mu_{N}+\int_{N}(\tau S+h-n)H\Phi d\mu_{N}\\
&= \int_{N}\tau|\nabla h|^2 H\Phi d\mu_{N}-2\tau\int_{N}H\triangle{\Phi}d\mu_{N}+\int_{N}(\tau S+h-n)H\Phi d\mu_{N}\\
&= \int_{N}\tau|\nabla h|^2 H\Phi d\mu_{N}+\int_{N}hH\Phi d\mu_{N}-2\tau\int_{N}H\triangle{\Phi}d\mu_{N}+\int_{N}(\tau S-n)H\Phi d\mu_{N}.
\end{align*}
For the first term, using Lemmas \ref{conjestimateS} and \ref{gradconjS} for $N\times[\frac{\tau}{2},\tau]$ to arrive at
\begin{align*}
\tau\int_{N}|\nabla h|^2H\Phi d\mu_{N}&\leq (2+C_{1}\tau)\int_{N}(\ln{(\frac{C_{3}e^{-D\tau/3}}{H(4\pi\tau)^{n/2}})}+C_{2}\tau)H\Phi d\mu_{N}\\
& \leq (2+C_{1}\tau)\int_{N}(\ln{C_{3}}-\frac{D\tau}{3}+h+C_{2}\tau)H\Phi d\mu_{N},
\end{align*}
with $C_{1},C_{2}$ as in Lemma \ref{gradconjS} while $C_{3}=\frac{e^{B}}{2^{n/2}}$.\\
Therefore, applying Lemma \ref{integralboundabove}, 
\begin{align*}
\lim_{\tau\rightarrow 0}(\int_{N}\tau|\nabla h|^2d\mu_{N}+\int_{N}hH\Phi d\mu_{N})&\leq 3\int_{N}hH\Phi d\mu_{N}+2\ln{C_{3}}\Phi(x,T)\\
&\leq (\frac{3n}{2}+2\ln{C_{3}})\Phi(x,T). 
\end{align*}
Now we observe that expect for the first 2 terms, the rest approaches $-n\Phi(y,T)$ as $\tau\rightarrow 0$. Thus 
\begin{equation*}
\lim_{t\rightarrow T}\rho_{\Phi}(t)\leq C_{4}\Phi(x,T).
\end{equation*}
Furthermore, since $\Phi$ is a positive test function satisfying the heat equation $\partial_{t}\Phi=\triangle{\Phi}$, hence,
\begin{equation}
\label{evolintegral}
\partial_{t}\rho_{\Phi}(t)=\partial_{t}\int_{N}v\Phi d\mu_{N}=\int_{N}(\Box{\Phi}v-\Phi\Box_{w}^{\ast}v)d\mu_{N}\geq 0.
\end{equation}
The above conditions imply that there exists $\alpha$ such that 
\begin{equation*}
\lim_{t\rightarrow T}\rho_{\Phi}(t)=\alpha.
\end{equation*}
Hence $\lim_{\tau\rightarrow 0}(\rho_{\Phi}(T-\tau)-\rho_{\Phi}(T-\frac{\tau}{2}))=0$. By equation (\ref{evolintegral}), part a), and the mean-value theorem, there exists a sequence $\tau_{i}\rightarrow 0$ such that 
\begin{equation*}
\lim_{\tau_{i}\rightarrow 0}\tau_{i}^2\int_{N}\Big(|\mathcal{S}+\text{Hess}h-\frac{g}{2\tau}|^2+|\triangle u-\nabla{u}\nabla{h}|^2\Big)H\Phi d\mu_{N}=0.
\end{equation*}
Now using standard inequalitites yield,
\begin{align*}
&(\int_{N}\tau_{i}(S+\triangle{h}-\frac{n}{2\tau_{i}})H\Phi d\mu_{N})^2 \\
&\leq (\int_{N}\tau_{i}^2(S+\triangle{h}-\frac{n}{2\tau_{i}})^2H\Phi d\mu_{N})(\int_{N}H\Phi d\mu_{N})\\
& \leq  (\int_{N}\tau_{i}^2|\mathcal{S}+\text{Hess}h-\frac{g}{2\tau}|^2H\Phi d\mu_{N})(\int_{N}H\Phi d\mu_{N}).
\end{align*} 
Since $\lim_{\tau_{i}\rightarrow 0}\int_{N}H\Phi d\mu_{N}=\Phi(y,T)<\infty$ and $|\triangle u-\nabla{u}\nabla{h}|^2\geq 0$, 
\begin{equation*}
\lim_{\tau_{i}\rightarrow 0}\int_{N}\tau_{i}(S+\triangle{h}-\frac{n}{2\tau_{i}})H\Phi d\mu_{N}=0.
\end{equation*}
Therefore, by Lemma \ref {integralboundabove},
\begin{align*}
\lim_{t\rightarrow T}\rho_{\Phi}(t)&=\lim_{t\rightarrow T}\int_{N}(\tau_{i}(2\triangle{h}-|\nabla{h}|^2+S)+h-n)H\Phi d\mu_{N}\\
&=\lim_{t\rightarrow T}\int_{N}(\tau_{i}(\triangle{h}-|\nabla{h}|^2)+h-\frac{n}{2})H\Phi d\mu_{N}\\
&=\lim_{t\rightarrow T}(\int_{N}(-\tau_{i}H\triangle{\Phi}d\mu_{N}+\int_{N}(h-\frac{n}{2})H\Phi d\mu_{N})\\
&=\int_{N}(h-\frac{n}{2})H\Phi d\mu_{N}\leq 0. 
\end{align*}
So $\alpha\leq 0$. To show that equality holds, we proceed by contradiction. Without loss of generality, we may assume $\Phi(y,T)=1$. Let $H\Phi=(4\pi\tau)^{-n/2}e^{\tilde{h}}$ (that is, $\tilde{h}=h-\ln{\Phi}$), then IBP yields,
\begin{equation}
\rho_{\Phi}(t)=\Psi_{w}(g,u,\tau,\tilde{h})+\int_{N}\Big(\tau(\frac{|\nabla{\Phi}|^2}{\Phi})-\Phi\ln{\Phi}\Big)H d\mu_{N}.
\end{equation}
By the choice of $\Phi$ the last term converges to 0 as $\tau\rightarrow 0$. So if $\lim_{t\rightarrow T}\rho_{\Phi}(t)=\alpha<0$ then $\lim_{\tau\rightarrow 0}\mu_{w}(g,u,\tau)<0$ and, thus, contradictss Lemma \ref{behaviorforsmalltau}. Therefore $\alpha=0$.
\end{proof}
Now Theorem \ref{eqstate} follows immediately.
\begin{proof}(Theorem \ref{eqstate})
 Recall from inequality (\ref{evolintegral})  
$$\partial_{t}\int_{N}v\Phi d\mu_{N}=\int_{N}(\Box{h}v-h\Box_{w}^{\ast}v)d\mu_{N}\geq 0.$$
By Proposition \ref{limit0}, $\lim_{t\rightarrow T}\int_{N}v\Phi d\mu_{N}=0$. Since $\Phi$ is arbitrary, $v\leq 0$.
\end{proof}

\section{ Proofs of Main Results}
\begin{proof}(Theorem \ref{harnackwarpedlaplacian})
By the diffeomorphism discussed in Section 2, the result follow from Theorem \ref{eqstate}. Note that if, with respect to (\ref{warpedLie}), $\Phi$ ($H$) is a positive function satisfying the equation $\partial_{t}\Phi=\triangle_{N}{\Phi}$ ($\Box^{\ast}_{w}H=0$) then pulling back by the diffeomorphism, with respect to (\ref{warpedbeforelie}),
\begin{align*}
\partial_{t}\Phi&=\triangle_{N}{\Phi}+p\nabla{u}\nabla{\Phi}=\triangle_{g_{M}}\Phi,\\
\partial_{t}H&=-\triangle_{N}H+SH+p\nabla{u}\nabla{H}.
\end{align*}

\end{proof}

\begin{proof}(Corollary \ref{harnackwarpedtime})
As H satisfies $\Box_{w}^{\ast}H=0$,  
\begin{equation*} h_{t}=-S-\triangle{h}+|\nabla{h}|^2+\frac{n}{2\tau}.
\end{equation*}
Substituting that into $\partial_{t}h(\gamma(t),t)=\nabla{h}\dot{\gamma}(t)+h_{t}\geq h_{t}-\frac{1}{2}(|\nabla{h}|^2+|\dot{\gamma}(t)|^2)$ and applying $v\leq 0$ prove the result. 
\end{proof}

\begin{proof}(Theorem \ref{conjandRD})
Part a) and b) are proved in Lemma \ref{compareRDandconjheat}. Part c) follows from Lemma \ref{integralboundabove} and the proof of Lemma \ref{limit0}, where it is shown that equality must hold.
\end{proof}

\begin{proof}(Corollary \ref{strangebehavior})
We abuse notation here by writing, 
$$\Psi(g_{M},\tau,\overline{h})=\int_{M}\Big(\tau(|\nabla \overline{h}|^2+\RR_{M})+\overline{h}-n-p\Big)(4\pi\tau)^{-(n+1)/2}e^{-\overline{h}}d\mu_{M},$$
for $\int_{M}\overline{H}d\mu_{M}=V(F)$.\\
Let $\Phi=1$ be the constant function in proposition \ref{limit0} then $\rho_{1}(t)=\Psi_{w}(g,u,\tau,h)$.\\
By Lemmas \ref{baseandwhole} and \ref{monoformwarp},

\begin{align*}
\Psi(g_{M},\tau,\overline{h})&= V(F)\int_{N}\Big(\tau(S+|\nabla{h}|^2)+h-n-p+pu-\frac{p}{2}\ln(4\pi\tau))H d\mu_{N}\\
&=V(F)\Psi_{w}(g,u,\tau,h)+pV(F)\int_{N}(u-1-\frac{1}{2}\ln(4\pi\tau))H d\mu_{N}.
\end{align*}

Since $\lim_{\tau\rightarrow 0}\ln(4\pi\tau)=-\infty$, by Lemma \ref{limit0}, $\lim_{\tau\rightarrow 0}\Psi(g_{M},\tau,\overline{h})=+\infty$. A direct calculation yields that,
\begin{equation}
\Psi(g_{M},\tau,\tilde{h})=\frac{1}{V(F)}\Psi(g_{M},\tau,\overline{h})+\ln(V(F)).
\end{equation}
Thus the result follows.
\end{proof}

\def\cprime{$'$}
\bibliographystyle{plain}
\bibliography{bio}

\end{document}